\begin{document}

\newtheorem{theo}{Theorem}[section]
\newtheorem{atheo}{Theorem*}
\newtheorem{prop}[theo]{Proposition}
\newtheorem{aprop}[atheo]{Proposition*}
\newtheorem{lemma}[theo]{Lemma}
\newtheorem{alemma}[atheo]{Lemma*}
\newtheorem{exam}[theo]{Example}
\newtheorem{coro}[theo]{Corollary}
\theoremstyle{definition}
\newtheorem{defi}[theo]{Definition}
\newtheorem{rem}[theo]{Remark}

\title[Lipschitz Bernoulli utility functions]
{Lipschitz Bernoulli utility functions}
\author{Efe A. Ok}
\address{Department of Economics and Courant Institute of Mathematical
Sciences, New York University}
\email{efe.ok@nyu.edu}
\author{Nik Weaver}
\address{Department of Mathematics, Washington University in St. Louis}
\email{nweaver@wustl.edu}
\date{April 22, 2021}
\subjclass[2020]{Primary 46N10, 91B06; Secondary 06A06, 46E15}
\keywords{Bernoulli utility, Lipschitz functions, Lipschitz preorders, expected utility
representation, Kantorovich-Rubinstein space, Wasserstein metric}


\begin{abstract}
We obtain variants of the classical von Neumann-Morgenstern expected
utility theorem, with and without the completeness axiom, in which the
derived Bernoulli utility functions are Lipschitz. The prize space in these
results is an arbitrary separable metric space, and the utility functions
may be unbounded. The main ingredient of our results is a novel
(behavioral) axiom on the underlying preference relations which is satisfied
by virtually all stochastic orders. The proof of the main representation theorem
is built on the fact that the completion of the Kantorovich-Rubinstein space is
the canonical predual of the Banach space of Lipschitz functions that vanish
at a fixed point. Two applications are given, one to the theory of
non-expected utility theory, and the other to the theory of decision-making
under uncertainty.
\end{abstract}

\maketitle

\section{Introduction}

One of the fundamental theorems of modern decision theory is the von
Neumann-Morgenstern {\it expected utility theorem}. The most common
version of this result considers a total preorder $\succsim$ on the space
$\Delta (X)$ of all Borel probability measures on a separable metric space $X$.
Here one interprets $\succsim$ as the preference relation of an
individual over a collection of risky prospects/lotteries. The theorem says
that if $\succsim$ is affine (i.e., $p\succsim q$ iff $(1-\lambda)p +
\lambda r\succsim (1-\lambda) q+\lambda r$, for any $p,q,r\in \Delta (X)$
and $\lambda \in [0,1)$) and is closed relative to the topology of
weak convergence, then there exists a continuous and bounded ``utility''
function $u: X \to \mathbb{R}$ such that $p\succsim q$ iff
$\int u\, {\rm d}p \geq \int u\, {\rm d}q$, i.e., the
expectation of $u$ with respect to $p$ is at least as large as that with
respect to $q$. So, on the basis of simple rationality axioms (transitivity
and affinity), and basic regularity conditions (totalness and continuity),
one arrives at the principal idea behind Daniel Bernoulli's famous
resolution of the St. Petersburg paradox: a rational individual would
evaluate lotteries on the basis of their expected utility. The function $u$
found in the said theorem is thus commonly referred to as a
{\it Bernoulli utility function}.

It is argued by many authors that requiring $\succsim$ to be total is
unnecessarily demanding, for rational individuals may well be unable to
rank even two riskless alternatives (say, because they use multiple,
potentially conflicting criteria in their evaluation, or because they have
insufficient information about the alternatives). Fortunately, when $X$ is
compact, relaxing this assumption in the von Neumann-Morgenstern theorem
alters its statement in a tractable fashion. In that case, for any given
continuous and affine, but not necessarily total, preorder $\succsim$ on
$\Delta (X)$ one obtains a  {\it set} $\mathcal{U}$ of continuous (Bernoulli
utility) functions such that
\begin{equation}\label{MU}
p\succsim q\qquad \mbox{iff}\qquad \int_X u\, {\rm d}p \geq
\int_X u\, {\rm d}q\quad\mbox{for every }u\in \mathcal{U}.
\end{equation}
This result is known as the {\it expected multi-utility theorem}.
(Preliminaries that we need from expected utility theory are reviewed in
Section 3.)

As basic as they are, there are two issues with these results. First,
neither of them says anything about the structure of Bernoulli utilities, other
than that they are continuous and bounded. In particular, in applications
where $X = \mathbb{R}$ (in which case we talk of monetary lotteries), it is
common to work with differentiable, sometimes even analytic, Bernoulli
utility functions, but it is not clear what sort of a behavioral
property (to be imposed on $\succsim$) would actually ensure the
differentiability of a Bernoulli utility. Second, it is a bit limiting that
these results force Bernoulli utilities to be bounded. For instance, most models
in economics posit that individuals are risk averse in the sense that
they would prefer a degenerate lottery that pays $\$x$ for sure to any
monetary lottery whose expected value is $x$. But the preferences of such
an individual cannot be represented as envisaged by the von
Neumann-Morgenstern theorem. If they did, the Bernoulli utility of the agent
would be a concave map on $\mathbb{R}$ (by Jensen's inequality), but no such
map is bounded. The expected multi-utility theorem is even more restrictive
in this regard, for the compactness hypothesis in that result is essential:
even when $X = \mathbb{R},$ the assumptions of the theorem do not yield
continuous Bernoulli utilities. Thus, this result does not apply to
preferences that are able to rank normally, or exponentially, distributed
random variables, which is obviously a significant shortcoming.

The culprit behind these limitations is the requirement that $\succsim$ be a closed
subset of $\Delta (X)\times \Delta (X)$. Because it does not reference the metric on $X$,
this assumption is too weak to give Bernoulli utilities any extra structure beyond
continuity, and because it
requires $\succsim$ to be defined on all of $\Delta(X)$, which includes badly behaved
measures when $X$ is unbounded, it is too strong to accomodate unbounded utilities. In
contrast, the present paper is built on a new continuity notion that leads to expected
utility theorems which suffer from these impediments to a
lesser extent. To start with, we assume that $X$ is separable throughout the
analysis, and restrict our attention to $\Delta _1(X)$, the set of all
Borel probability measures relative to which any Lipschitz function on $X$
is integrable.\footnote{Here and throughout the paper, by ``Lipschitz
function'' we always mean ``real-valued Lipschitz function.''} This allows
us to endow $\Delta_1(X)$ with the Wasserstein 1-metric $W_1$ in lieu of the
topology of weak convergence. While essential for the present approach, working
on $\Delta _1(X)$ instead of $\Delta (X)$ is not really restrictive. For
instance, $\Delta _1(\mathbb{R})$ is just the set of all Borel probability
measures on $\mathbb{R}$ with finite mean, and $\Delta _1(X) = \Delta (X)$
if $X$ is bounded.

The main innovation of our work is to replace the classical continuity property of
a preorder $\succsim $ on $\Delta_1(X)$ with a requirement of the following form:
if $q$ does not have equivalent or superior value to $p$, then for any $p'$ and $q'$
the mixture $(1-\lambda )q+\lambda q'$ will not have equivalent or superior value to
$(1-\lambda)p+\lambda p'$, provided $\lambda \geq 0$ is smaller
than a critical value that depends on the distance between $p'$ and $q'$. For
reasons that we discuss in Section 4.1, we posit this critical value to have
the form $\frac{K}{K+W_{1}(p', q')}$ for some $K > 0$
(which depends only on $p$ and $q$). Thus, we designate a
preorder $\succsim$ on $\Delta_1(X)$ as {\it Lipschitz} if for every
$p,q\in \Delta_1(X)$ such that $q\succsim p$ fails, there is a $K > 0$ such that
$(1 - \lambda )q+\lambda q'\succsim (1-\lambda )p+\lambda p'$ fails for all
$p',q' \in \Delta_1(X)$ and all $\lambda $ in $\left[0,\frac{K}{K+W_{1}(p', q')}\right)$.
There are plenty of interesting
examples of Lipschitz preorders. In particular, we show in Section 4.2 that
virtually any stochastic order is indeed affine and Lipschitz in this sense.

Lipschitz-type properties have occasionally appeared in the literature on decision theory. The
one most directly comparable to ours is due to Levin \cite{levin}. Levin's Lipschitz condition is very
different from ours in both spirit and formalism, yet we prove in Section 4.7 that the two
definitions are equivalent for affine preorders, befitting the terminology we adopt here.

The main result of the present paper is presented in Section 4.3.
We prove that a preorder $\succsim $ on $\Delta_1(X)$ is Lipschitz
and affine if and only if there is a nonempty family $\mathcal{U}$ of
Lipschitz (Bernoulli utility) functions such that (\ref{MU}) holds. The fact
that any preorder that can be represented this way is Lipschitz uses the
Kantorovich-Rubinstein duality theorem (reviewed below in Section 2.1).
To prove the converse, we associate to
$\succsim$ the positive cone $C:=\{\alpha (p-q):\alpha \geq 0$ and $p\succsim
q\}$, which sits in the Kantorovich-Rubinstein space ${\rm KR}(X)$.
Affinity of $\succsim$ entails that $C$ is convex and that $p\succsim q$ iff
$p - q\in C.$ We then use the Lipschitz property of $\succsim$ to show that $C$
is closed in ${\rm KR}(X)$. Thus, $C$ equals the intersection of all
closed halfspaces in ${\rm KR}(X)$ that contain it. Exploiting the fact
that the dual of ${\rm KR}(X)$ is the Banach space ${\rm Lip}_0(X)$ of all
Lipschitz functions on $X$ that vanish at a fixed point (with the Lipschitz
number acting as the norm) -- see Section 2.3 -- we obtain the desired
characterization.\footnote{The expected multi-utility theorem was proved in
\cite{dmo} by an analogous method, but utilizing instead the fact that
$C(X)^* \cong M(X)$ when $X$ is compact.} We then use this duality in Section 4.4
to show that $\mathcal{U}$ can be chosen in our main result as weak*-compact
and convex, and to identify in what way we can think of the $\mathcal{U}$ in our
characterization as unique. Then in Section 4.5 we show that one can
guarantee the set $\mathcal{U}$ in (\ref{MU}) to contain only strictly
$\succsim$-increasing functions,
but that comes at the cost of losing its weak*-compactness. In
Section 5 we discuss how one may use these alternate representations of a
Lipschitz affine preorder $\succsim$ on $\Delta_1(X)$ to find the
$\succsim$-maximal lotteries in any set $P\subseteq \Delta_1(X)$.

As a special case of our main representation theorem, we find that a total
Lipschitz affine preorder $\succsim$ on $\Delta_1(X)$ admits an expected
utility representation exactly as in the classical von Neumann-Morgenstern
theorem, but now with a Lipschitz (and possibly unbounded) Bernoulli utility
function. Moreover, the duality between direct sums of finitely many copies of
${\rm KR}(X)^*$ and ${\rm Lip}_0(X)$ yields another type of expected utility
theorem known as a state-dependent expected multi-utility theorem (Section
7.1). In each of these results, $X$ remains a separable metric space, and
the derived Bernoulli utilities are allowed to be unbounded.

There are several advantages to working with Lipschitz Bernoulli utility
functions, as opposed to those that are merely continuous. For instance, in
most applications of decision theory, the prize space is taken to be a measurable
subset of $\mathbb{R}^n$, and in that context any
Lipschitz utility is almost everywhere differentiable. Moreover, the maxima
of such utility functions can be studied by means of nonsmooth analysis
(Section 3.5). Of course, insofar as applications are concerned, one can
simply assume that an individual has a Lipschitz Bernoulli utility. Instead,
the ``use'' of our findings is foundational. They translate the seemingly
technical assumption of Lipschitz
continuity into the behavioral language of preference relations.

We conclude the paper with two applications of our representation theorems.
The first one utilizes our main theorem and the equivalence of our Lipschitz
condition with that of Levin for affine preorders. Put precisely, we prove
that if $\succsim$ is a Lipschitz preorder on $\Delta_1(X)$ in the sense
of Levin, then its affine core, that is, the largest affine preorder
contained in $\succsim$, exists and admits an expected multi-utility
representation with Lipschitz Bernoulli utilities. This seems like a useful
observation because the affine cores of preorders are routinely used in
non-expected utility theory (where $\succsim$ need not be affine). In our
second application, we consider preferences over Anscombe-Aumann acts (i.e.,
state-dependent lotteries), and prove a subjective expected utility theorem
in which the beliefs of a decision-maker over the states of nature are
uniquely identified while their preferences over riskless prizes are again
captured by a set of Lipschitz Bernoulli utilities.

\section{Preliminaries on Kantorovich-Rubinstein theory}

\subsection{$W_1$-metrization of Borel probability measures}
We denote the set of all Borel probability measures on a
metric space $X = (X, d)$ by $\Delta(X)$. We will primarily work with the following
subset of $\Delta(X)$:
\begin{equation}
\Delta_1(X) := \left\{p \in \Delta(X): \int_{X\times X} d\, {\rm d}(p\times p) < \infty\right\}.
\end{equation}
We can think of the members of $\Delta_1(X)$ as those
probability measures on $X$ whose spreads are controlled by the metric $d$.
This is made clearer by the following result.

\begin{prop}\label{altdelta1}
Let $X = (X,d)$ be a metric space and let $\mu$ be a finite positive
Borel measure on $X$. Then the following are equivalent:
\smallskip

{\narrower{
\noindent (a) $\int d\, {\rm d}(\mu\times \mu) < \infty$;
\smallskip

\noindent (b) $\int d(\cdot, e)\, {\rm d}\mu < \infty$ for some (any) $e \in X$;
\smallskip

\noindent (c) every Lipschitz function on $X$ is integrable against $\mu$.

\smallskip}}
\end{prop}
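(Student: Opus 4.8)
The plan is to establish the cycle (a) $\Rightarrow$ (b) $\Rightarrow$ (c) $\Rightarrow$ (a), together with the preliminary observation that the truth of (b) does not depend on the chosen basepoint $e$. Write $M := \mu(X) < \infty$; if $M = 0$ all three conditions hold vacuously, so I may assume $\mu \neq 0$. The entire argument rests on two elementary facts: for each fixed $e$ the map $x \mapsto d(x,e)$ is $1$-Lipschitz (hence Borel), and for any $L$-Lipschitz $f$ one has the pointwise bound $|f(x)| \leq |f(e)| + L\,d(x,e)$. To settle the ``some (any)'' clause, suppose $\int d(\cdot,e)\,{\rm d}\mu < \infty$ for one point $e$; then for any $e'$ the estimate $d(x,e') \leq d(x,e) + d(e,e')$ integrates to $\int d(\cdot,e')\,{\rm d}\mu \leq \int d(\cdot,e)\,{\rm d}\mu + d(e,e')\,M < \infty$, where finiteness of $\mu$ is used crucially. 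Thus (b) is basepoint-independent.

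Two of the remaining links are pure triangle-inequality bookkeeping. For (b) $\Rightarrow$ (c), integrating the Lipschitz bound above gives $\int |f|\,{\rm d}\mu \leq |f(e)|\,M + L\int d(\cdot,e)\,{\rm d}\mu < \infty$, so every Lipschitz $f$ is $\mu$-integrable. Conversely (c) $\Rightarrow$ (b) is immediate, since $d(\cdot,e)$ is itself Lipschitz and hence integrable by (c); together these yield (b) $\Leftrightarrow$ (c). For (b) $\Rightarrow$ (a), the inequality $d(x,y) \leq d(x,e) + d(y,e)$ integrated over $\mu \times \mu$ gives $\int d\,{\rm d}(\mu\times\mu) \leq 2M\int d(\cdot,e)\,{\rm d}\mu < \infty$, closing the cycle.

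This leaves (a) $\Rightarrow$ (b), which I expect to be the main (if modest) obstacle, being the one implication that must manufacture a finite single-variable integral out of the product integral. Here I would invoke Tonelli's theorem: since $d \geq 0$, the product integral in (a) equals the iterated integral $\int\!\big(\int d(x,y)\,{\rm d}\mu(x)\big)\,{\rm d}\mu(y)$, and finiteness of this quantity forces the inner integral $y \mapsto \int d(x,y)\,{\rm d}\mu(x)$ to be finite for $\mu$-almost every $y$, in particular for at least one $y_0$. That finiteness is exactly condition (b) with basepoint $e = y_0$, and the basepoint-independence established at the outset upgrades it to every $e$. The only point demanding genuine care is the joint measurability of $d$ on $X \times X$, which is guaranteed by its continuity together with the separability that makes the product Borel $\sigma$-algebra coincide with the Borel $\sigma$-algebra of $X \times X$; once that is in hand, everything else reduces to the triangle inequality and the finiteness of $\mu$.
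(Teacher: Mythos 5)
Your proof is correct and uses the same ingredients as the paper's: the triangle inequality, the pointwise bound $|f| \leq |f(e)| + L\,d(\cdot,e)$, and Tonelli's theorem. The only organizational difference is that you extract (b) from (a) directly via almost-everywhere finiteness of the sections $y \mapsto \int d(x,y)\,\mu({\rm d}x)$, whereas the paper establishes (a) $\Rightarrow$ (c) by contraposition; the underlying estimates are identical.
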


\begin{proof}
Fix $e \in X$ arbitrarily and use Tonelli's theorem to observe that
\begin{eqnarray*}
\int_{X\times X} d\, {\rm d}(\mu\times \mu)
&=& \int_X\int_X d(x,y)\, \mu({\rm d}x)\, \mu({\rm d}y)\cr
&\leq& \int_X\int_X (d(x,e) + d(y,e))\, \mu({\rm d}x)\, \mu({\rm d}y)\cr
&=& \mu(X)\int_X d(x,e)\, \mu({\rm d}x) + \mu(X)\int_X d(y,e)\, \mu({\rm d}y)\cr
&=& 2\mu(X)\int_X d(\cdot, e)\, {\rm d}\mu.
\end{eqnarray*}
Thus, if every Lipschitz function is integrable against $\mu$, then in particular
$d(\cdot, e)$ is integrable against $\mu$, and we conclude from the above
that $\int d\, {\rm d}(\mu\times \mu) < \infty$. This establishes
(c) $\Rightarrow$ (b) $\Rightarrow$ (a). For (a) $\Rightarrow$
(c), suppose some Lipschitz function $f$ is not integrable against
$\mu$. By taking the positive or negative part of $f$, without loss of generality
we may assume $f \geq 0$. Now $f(\cdot) \leq f(e) + Ld(\cdot, e)$ where
$L$ is the Lipschitz number of $f$ and $e \in X$ is still arbitrary, and
$\int f(e)\, {\rm d}\mu = f(e)\mu(X)$ is finite, so
$\int f\, {\rm d}\mu = \infty$ implies
$\int d(\cdot,e)\, {\rm d}\mu = \infty$. Since $e$ is arbitrary, it follows that
\begin{equation*}
\int_X\int_X d(x,y)\, \mu({\rm d}x)\, \mu({\rm d}y) =
\int_X \infty\cdot \mu({\rm d}y) = \infty.
\end{equation*}
This shows that if some Lipschitz function were not integrable against $\mu$ then
condition (a) would fail.
\end{proof}

One easy consequence of this result is that $\Delta_{1}(X) = \Delta (X)$ iff $X$ is bounded. Indeed,
if $X$ is unbounded then we can fix any $e \in X$ and find a sequence $(x_n)$ with
$d(x_n, e) \geq 2^n$ for all $n$, and then $d(\cdot, e)$ will not be integrable against
the probability measure $\sum 2^{-n}\delta_{x_n}$.

The {\it Wasserstein 1-metric} $W_1$ on $\Delta_1(X)$ is defined as
\begin{equation}
W_1(p,q) := \inf \int_{X\times X} d\, {\rm d}\mu,
\end{equation}
taking the infimum over all couplings of $p$ and $q$, that is, all Borel
probability measures $\mu$ on $X\times X$ with marginals $p$ and $q$ on
the first and second components, respectively.\footnote{The results
which follow are standard. They are proved in, say, \cite[Chapter 21]{garling} or
\cite[Chapter 7]{villani}. The original form of the
Kantorovich-Rubinstein theorem assumes that $X$ is compact; it was
extended to all separable metric spaces in \cite{dudley1} and \cite{acosta}.
The form of the result used here is presented as Theorem 11.8.2 in \cite{dudley2}.}
It is well-known that $W_1$ is indeed a metric on $\Delta_1(X)$, and that the
infimum in its definition is attained if $X$ is a Polish metric space. It is also worth
noting that convergence with respect to $W_1$ implies weak convergence;
in fact, for any $p, p_1, p_2, \ldots \in \Delta_1(X)$, we have $W_1(p_n,p) \to 0$
iff $\{p_0, p_1, \ldots\}$ is uniformly integrable and $(p_n)$ converges to
$p$ weakly. (Here ``uniformly integrable'' means that for some, hence any,
$e \in X$
\begin{equation*}
\limsup_m \int_{X_K} d(\cdot, e)\, {\rm d}p_m \to 0
\end{equation*}
as $K \to \infty$, where $X_K := \{x \in X: d(x,e) \geq K\}$.)

The Kantorovich-Rubinstein duality theorem states that if $X$ is separable
then for any $p,q \in \Delta_1(X)$
\begin{equation}\label{krthm}
W_1(p,q) = \sup \left|\int_X f\, {\rm d}(p-q)\right|,
\end{equation}
taking the supremum over all 1-Lipschitz functions $f$ on $X$.
This is the characterization of $W_1$ that is relevant to our purposes, so
separability will be assumed throughout the paper.

\subsection{The Kantorovich-Rubinstein space}
The Kantorovich-Rubinstein space ${\rm KR}(X)$ has been studied for compact
metric spaces $X$ (Section VIII.4 of \cite{kr}; see also Section 2.3 in the first edition
of \cite{weaver}). As we are not aware of any systematic treatment of the noncompact
case, we include proofs of basic facts about ${\rm KR}(X)$ in that setting here.
They are all straightforward generalizations from the compact setting.

Let $X = (X,d)$ be a separable metric space. We define ${\rm KR}(X)$ to be the set
of all finite signed Borel measures $\mu$ on $X$ satisfying $\mu(X) = 0$ and
\begin{equation}
\int_{X\times X} d\, {\rm d}(|\mu|\times |\mu|) < \infty.
\end{equation}
By Proposition \ref{altdelta1}, $\mu$ belongs to ${\rm KR}(X)$ iff
$\mu(X) = 0$ and $\int d(\cdot, e)\, {\rm d}|\mu| < \infty$ for some (any)
$e \in X$. This characterization plus the fact that $|\mu + \nu| \leq |\mu| + |\nu|$
for any signed measures $\mu$ and $\nu$ shows that ${\rm KR}(X)$ is a real vector space.

We equip ${\rm KR}(X)$ with the norm
\begin{equation}
\|\mu\|_{\rm KR} := \sup \left|\int_X f\, {\rm d}\mu\right|,
\end{equation}
taking the supremum over all 1-Lipschitz functions $f$ on $X$.
This supremum is finite because if $f$ is 1-Lipschitz then
$|f(\cdot) - f(e)| \leq d(\cdot, e)$ and so (using $\mu(X) = 0$)
\begin{equation*}
\left|\int_X f\, {\rm d}\mu\right| = \left|\int_X (f - f(e))\, {\rm d}\mu\right|
\leq \int_X d(\cdot, e)\, {\rm d}|\mu| < \infty.
\end{equation*}
It is clear that $\|\cdot\|_{\rm KR}$ is a seminorm. We will next prove that it is
actually a norm (albeit generally not a complete norm).\footnote{When the diameter of
$X$ is finite, ${\rm KR}(X)$ is complete if and only if $X$ is uniformly
discrete, i.e., there exists $a > 0$ such that $d(x,y) \geq a$ for every distinct $x,y \in X$.}

\begin{prop}\label{krprop}
Let $X$ be a separable metric space. Then
\begin{equation}
{\rm KR}(X) = \{\alpha(p - q): \alpha \geq 0\hbox{ and }p,q \in \Delta_1(X)\},
\end{equation}
with $\|\alpha(p - q)\|_{\rm KR} = \alpha W_1(p,q)$.
Moreover, $\|\cdot\|_{\rm KR}$ is a norm, and the set of finitely supported
measures in ${\rm KR}(X)$ is dense relative to this norm.
\end{prop}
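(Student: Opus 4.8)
The plan is to treat the four assertions in turn, leaning throughout on the Kantorovich–Rubinstein duality formula \eqref{krthm} and on the already-noted fact that $W_1$ is a genuine metric on $\Delta_1(X)$. For the set identity I would argue via the Jordan decomposition. Given $\mu \in {\rm KR}(X)$, write $\mu = \mu^+ - \mu^-$; the constraint $\mu(X) = 0$ forces $\mu^+(X) = \mu^-(X) =: \alpha$. If $\alpha = 0$ then $\mu = 0$ and there is nothing to prove, while otherwise $p := \mu^+/\alpha$ and $q := \mu^-/\alpha$ are probability measures with $\mu = \alpha(p-q)$. Since $|\mu| = \mu^+ + \mu^-$ and $\int d(\cdot,e)\,{\rm d}|\mu| < \infty$ by the characterization of ${\rm KR}(X)$ via Proposition \ref{altdelta1}, both $p$ and $q$ satisfy condition (b) of that proposition and hence lie in $\Delta_1(X)$. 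The reverse inclusion is immediate: if $\mu = \alpha(p-q)$ with $\alpha \ge 0$ and $p,q \in \Delta_1(X)$, then $\mu(X) = 0$, and $|\mu| \le \alpha(p+q)$ gives $\int d(\cdot,e)\,{\rm d}|\mu| < \infty$.

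The norm formula is then a one-line consequence of \eqref{krthm}: since $\alpha \ge 0$ we may pull it through the supremum over $1$-Lipschitz functions that defines $\|\cdot\|_{\rm KR}$, obtaining
\begin{equation*}
\|\alpha(p-q)\|_{\rm KR} = \alpha \sup \left|\int_X f\, {\rm d}(p-q)\right| = \alpha W_1(p,q).
\end{equation*}
Definiteness of $\|\cdot\|_{\rm KR}$, which upgrades the known seminorm to a norm, drops out at once: if $\|\mu\|_{\rm KR} = 0$, write $\mu = \alpha(p-q)$ by the first part; then $\alpha W_1(p,q) = 0$, so either $\alpha = 0$ or $W_1(p,q) = 0$ (i.e.\ $p = q$ since $W_1$ is a metric), and in either case $\mu = 0$.

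For density of the finitely supported measures I would first reduce to approximating a single probability measure. Writing $\mu = \alpha(p-q)$ and combining the triangle inequality with the norm formula, $\|\mu - \alpha(p'-q')\|_{\rm KR} \le \alpha W_1(p,p') + \alpha W_1(q,q')$ for any probability measures $p',q'$, so it suffices to show that every $p \in \Delta_1(X)$ is a $W_1$-limit of finitely supported probability measures. This is where separability does the real work, and where I expect the only genuine subtlety: a separable metric space need not have totally bounded balls, so one cannot simply chop a large ball into finitely many small pieces. Instead, fix a countable dense set $\{e_1,e_2,\dots\}$ and let $\phi_n \colon X \to \{e_1,\dots,e_n\}$ send $x$ to a nearest point among $e_1,\dots,e_n$ (say the one of least index); this map is Borel, and the pushforward $p_n := (\phi_n)_*p$ is finitely supported. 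Transporting mass along $\phi_n$ supplies the coupling $({\rm id},\phi_n)_*p$ of $p$ and $p_n$, whence
\begin{equation*}
W_1(p,p_n) \le \int_X d(x,\phi_n(x))\, {\rm d}p(x) = \int_X \min_{k \le n} d(x,e_k)\, {\rm d}p(x).
\end{equation*}
The integrand decreases pointwise to $\inf_k d(x,e_k) = 0$ by density, and it is dominated by the $p$-integrable function $d(\cdot,e_1)$ (Proposition \ref{altdelta1}(b) with reference point $e_1$), so dominated convergence gives $W_1(p,p_n) \to 0$. Applying this to both $p$ and $q$ produces finitely supported elements $\alpha(p_n - q_n) \in {\rm KR}(X)$ converging to $\mu$ in $\|\cdot\|_{\rm KR}$, which completes the argument.
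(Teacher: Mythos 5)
Your proof is correct, and the first two assertions (the Jordan-decomposition description of ${\rm KR}(X)$ and the norm formula via \eqref{krthm}) are handled exactly as in the paper. You diverge in the other two parts, in both cases legitimately. For definiteness, the paper does not pass through $W_1$ at all: it shows directly that a measure annihilated by every Lipschitz function is zero, by approximating $1_C$ for each closed $C$ with the Lipschitz functions $\max(1 - n\,d(\cdot,C),0)$ and applying dominated convergence; your route is shorter but outsources the work to the stated (and referenced, but not proved) fact that $W_1$ is a genuine metric on $\Delta_1(X)$, whereas the paper's argument is self-contained and as a by-product re-establishes that Lipschitz functions separate finite Borel measures. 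For density, the paper stays entirely on the dual side: it partitions $X$ into small Borel pieces built from a dense sequence, uses integrability of $d(\cdot,e)$ against $|\mu|$ to truncate to finitely many pieces, collapses each piece to a point mass (sending the tail to $\delta_e$), and estimates $\int f\,{\rm d}(\mu-\mu_0)$ directly for $1$-Lipschitz $f$. You instead reduce to the statement that finitely supported probability measures are $W_1$-dense in $\Delta_1(X)$ and prove that with a nearest-point projection $\phi_n$ and the transport bound $W_1(p,(\phi_n)_*p) \le \int \min_{k\le n} d(x,e_k)\,{\rm d}p$; this is clean and isolates a fact of independent interest, but it invokes the coupling definition of $W_1$ together with the duality theorem, both of which the paper's argument avoids. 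Your measurability and domination checks ($\phi_n$ Borel, integrand dominated by $d(\cdot,e_1)$) are the right ones and they go through.
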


\begin{proof}
As ${\rm KR}(X)$ is a vector space, $\alpha(p-q) \in {\rm KR}(X)$
whenever $\alpha \geq 0$ and $p,q \in \Delta_1(X)$. Conversely, for any nonzero
$\mu \in {\rm KR}(X)$ we have $\mu = \alpha(p - q)$ where $\alpha := \mu^+(X) = \mu^-(X)$,
$p := \frac{1}{\alpha}\mu^+$, and $q := \frac{1}{\alpha}\mu^-$. This proves the first assertion.
The second assertion follows from the simple fact that $\|\mu\|_{\rm KR} = \alpha\|p - q\|_{\rm KR}$,
together with the Kantorovich-Rubinstein duality theorem.

To see that $\|\cdot\|_{\rm KR}$ is a norm, not merely a seminorm, suppose $\|\mu\|_{\rm KR} = 0$,
i.e., $\int f\, {\rm d}\mu = 0$ for every Lipschitz function $f$.
Equivalently, $\int f\, {\rm d}\mu^+ = \int f\, {\rm d}\mu^-$ for every Lipschitz function
$f$. We must show that $\mu = 0$. To this end, fix a closed set $C \subseteq X$, and for
$n \in \mathbb{N}$ define
\begin{equation*}
f_n(x) := {\rm max}(1 - n\cdot d(x,C), 0).
\end{equation*}
This sequence converges boundedly pointwise to the characteristic function $1_C$, so by the
dominated convergence theorem $\int f_n\, {\rm d}\mu^+ = \int f_n\, {\rm d}\mu^-$ for all $n$
implies $\int 1_C\, {\rm d}\mu^+ = \int 1_C\, {\rm d}\mu^-$, i.e., $\mu^+(C) = \mu^-(C)$.
Since $C$ was arbitrary, we have shown that if $\mu^+$ and $\mu^-$ agree when integrated
against any Lipschitz function then they agree on all closed sets, and therefore they are equal,
which yields the desired conclusion that $\mu = 0$.

For the last assertion, let $\mu \in {\rm KR}(X)$, let $\epsilon > 0$, and let $(x_n)$ be a
dense sequence in $X$. Set $\epsilon' := \frac{\epsilon}{|\mu|(X) + 1}$ and
$U_1 := {\rm ball}_{\epsilon'}(x_1)$ (the open ball about $x_1$ of radius $\epsilon'$)
and inductively define
$U_{n+1} := {\rm ball}_{\epsilon'}(x_{n+1}) \setminus(U_1 \cup \cdots \cup U_n)$.
Fix $e \in X$. Then $X = \bigcup U_n$, and so there exists $N > 0$ such that
\begin{equation*}
\int_X d(\cdot, e)\, {\rm d}|\mu| \leq \int_{X_N} d(\cdot, e)\, {\rm d}|\mu| + \epsilon',
\end{equation*}
where $X_N := U_1 \cup \cdots \cup U_N$. Define
\begin{equation*}
\mu_0 := \sum_{n=1}^N \mu(U_n)\cdot \delta_{x_n} + \mu(X\setminus X_N)\cdot \delta_e.
\end{equation*}
Then $\mu_0$ is a finitely supported measure in
${\rm KR}(X)$, and for any 1-Lipshitz function $f$ on $X$ satisfying $f(e) = 0$ we have
\begin{equation}\label{krineq}
\left|\int_X f\, {\rm d}(\mu - \mu_0)\right| \leq \epsilon'(|\mu|(X) + 1) = \epsilon
\end{equation}
because
\begin{equation*}
\left|\int_{U_n} f\, {\rm d}(\mu - \mu_0)\right| \leq \epsilon'\cdot |\mu|(U_n)
\end{equation*}
for each $1 \leq n \leq N$ and
\begin{equation*}
\left|\int_{X\setminus X_N} f\, {\rm d}(\mu - \mu_0)\right|
= \left|\int_{X \setminus X_N} f\, {\rm d}\mu\right|
\leq \int_{X \setminus X_N} d(\cdot, e)\, {\rm d}|\mu|
\leq \epsilon'
\end{equation*}
(since $|f(\cdot)| \leq d(\cdot, e)$). Since the integral of any constant function
against any measure in ${\rm KR}(X)$ is zero, given any 1-Lipschitz function
$f$ on $X$ we can apply (\ref{krineq}) to $f - f(e)$ and obtain
$|\int f\, {\rm d}(\mu - \mu_0)| = |\int (f - f(e))\, {\rm d}(\mu - \mu_0)|
\leq \epsilon$,
showing that $\|\mu - \mu_0\|_{\rm KR} \leq \epsilon$.
This completes the proof.
\end{proof}

The idea of working with functions that vanish at an arbitrarily chosen ``base point''
also appears in the context of Lipschitz spaces. A metric space $X = (X, d, e)$ with a
specified base
point $e$ is called {\it pointed}. Given such a space we define ${\rm Lip}_0(X)$ to be the set
of all Lipschitz functions on $X$ which vanish at $e$. This becomes a Banach space if we
take the norm of a function to be its Lipschitz number. In fact, it is a dual space:
relative to the pairing
\begin{equation*}
\langle \mu, f\rangle := \int_X f\, {\rm d}\mu
\end{equation*}
($\mu \in {\rm KR}(X)$, $f \in {\rm Lip}_0(X)$), the canonical predual of ${\rm Lip}_0(X)$
is the completion of the space of finitely supported measures in ${\rm KR}(X)$
\cite[Theorem 3.3]{weaver}. As Proposition \ref{krprop} shows that ${\rm KR}(X)$ is
(densely) contained in this completion, the next result is
immediate.\footnote{The fact that ${\rm KR}(X)$ does not depend on the base point
shows that, up to isometric isomorphism, ${\rm Lip}_0(X)$ does not either.}

\begin{theo}\label{duality}
${\rm KR}(X)^* \cong {\rm Lip}_0(X)$ for any separable pointed metric space $X$.
\end{theo}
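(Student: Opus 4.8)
The plan is to read the theorem off the predual identification quoted just above, \cite[Theorem 3.3]{weaver}, combined with the density statement of Proposition \ref{krprop} and the elementary principle that a normed space has the same dual as its completion. Write $\mathcal{F}$ for the subspace of finitely supported measures in ${\rm KR}(X)$, equipped with the restriction of $\|\cdot\|_{\rm KR}$, and let $\widehat{\mathcal{F}}$ be its completion. Then \cite[Theorem 3.3]{weaver} furnishes an isometric isomorphism $f \mapsto \langle \cdot, f\rangle$ from ${\rm Lip}_0(X)$ onto $(\widehat{\mathcal{F}})^*$, where $\langle \mu, f\rangle = \int_X f\,{\rm d}\mu$ and ${\rm Lip}_0(X)$ carries the Lipschitz-number norm. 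It remains only to replace $\widehat{\mathcal{F}}$ by ${\rm KR}(X)$.

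First I would note the isometric inclusions $\mathcal{F} \subseteq {\rm KR}(X) \subseteq \widehat{\mathcal{F}}$, the first by definition and the second via the canonical embedding of a normed space into its completion. Proposition \ref{krprop} says that $\mathcal{F}$ is $\|\cdot\|_{\rm KR}$-dense in ${\rm KR}(X)$; as $\mathcal{F}$ is by construction dense in $\widehat{\mathcal{F}}$, it follows that ${\rm KR}(X)$ is dense in $\widehat{\mathcal{F}}$, so that $\widehat{\mathcal{F}}$ is simultaneously the completion of ${\rm KR}(X)$.

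Next I would invoke the fact that restriction $\Phi \mapsto \Phi|_{{\rm KR}(X)}$ is an isometric isomorphism $(\widehat{\mathcal{F}})^* \to {\rm KR}(X)^*$: a bounded functional on the dense subspace ${\rm KR}(X)$ extends uniquely and without change of norm to $\widehat{\mathcal{F}}$ by continuity, while restriction back to a dense subspace preserves the operator norm. Composing this with the isomorphism of \cite[Theorem 3.3]{weaver} gives an isometric isomorphism ${\rm Lip}_0(X) \to {\rm KR}(X)^*$. Since the functional attached to $f$ acts on $\widehat{\mathcal{F}}$ by $\mu \mapsto \int_X f\,{\rm d}\mu$, its restriction to ${\rm KR}(X)$ is the same formula, so the isomorphism is implemented by the pairing $\langle \cdot, \cdot\rangle$, which is exactly the assertion of the theorem.

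I do not expect a genuine obstacle, in keeping with the remark that the result is immediate; the content is already carried by \cite[Theorem 3.3]{weaver} and by the density in Proposition \ref{krprop}. The only point needing care is to track the pairing through the two identifications, so as to confirm that the abstract duality restricts to the concrete integration pairing on ${\rm KR}(X)$; this is transparent once one observes that each $f \in {\rm Lip}_0(X)$ already defines a bounded functional on ${\rm KR}(X)$ directly (finiteness of $\sup_{{\rm Lip}(f)\le 1}|\int_X f\,{\rm d}\mu|$ being exactly the computation of Section 2.2). Should one wish to avoid the citation entirely, the substantive step is surjectivity, which I would establish by hand: given $\phi \in {\rm KR}(X)^*$, put $f(x) := \phi(\delta_x - \delta_e)$, verify $f(e) = 0$ and $|f(x) - f(y)| = |\phi(\delta_x - \delta_y)| \le \|\phi\|\,d(x,y)$ so that $f \in {\rm Lip}_0(X)$, and then use the density of $\mathcal{F}$ from Proposition \ref{krprop} to conclude that $\phi$ and $\langle \cdot, f\rangle$, which agree on differences of point masses, coincide on all of ${\rm KR}(X)$.
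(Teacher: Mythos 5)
Your proposal is correct and follows exactly the paper's route: it cites \cite[Theorem 3.3]{weaver} for the identification of ${\rm Lip}_0(X)^{\phantom{*}}$ with the dual of the completion of the finitely supported measures, and then uses the density statement of Proposition \ref{krprop} together with the fact that a normed space and any dense subspace have the same dual. You merely spell out the details that the paper leaves as ``immediate,'' and the optional direct surjectivity argument via $f(x) := \phi(\delta_x - \delta_e)$ is a correct bonus not present in the paper.
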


Thus, if $X$ is separable then every bounded linear functional on ${\rm KR}(X)$
is given by integration against a Lipschitz function on $X$. This fact will play an
essential role in what follows.

On bounded subsets of ${\rm Lip}_0(X)$, the weak* topology on ${\rm Lip}_0(X)$ agrees with the
topology of pointwise convergence. If $X$ is compact, then on bounded sets
this is the same as the topology of uniform convergence. Here ``bounded''
means ``bounded in norm (i.e., Lipschitz number) in ${\rm Lip}_0(X)$.'' Note
that ${\rm Lip}_0(X)$ does contain unbounded functions if the diameter of $X$ is
infinite.

In the sequel we will also need a minor generalization of Theorem \ref{duality}. Take
any nonempty finite index set $I$, and for each $i \in I$ let $X_i=(X_i,d_i,e_i)$ be
a separable pointed metric space. We consider the direct
sums
\begin{equation*}
\bigoplus_{i\in I} {\rm Lip}_0(X_i)\qquad\mbox{and}\qquad
\bigoplus_{i\in I} {\rm KR}(X_i)
\end{equation*}
as normed linear spaces relative to the norms
\begin{equation*}
\left\| (f_i)_{i\in I}\right\| :=\max_{i\in I}L(f_i)\qquad\mbox{and}\qquad
\left\| (\mu_i)_{i\in I}\right\|
:=\sum_{i\in I}\left\| \mu_i\right\|_{\rm KR},
\end{equation*}
where $L(f_i)$ stands for the Lipschitz number of $f_i$. Then, given
Theorem \ref{duality}, it is an easy exercise to prove that 
$\left( \bigoplus_{i\in I} {\rm KR}(X_i)\right)^* \cong
\bigoplus_{i\in I} {\rm Lip}_0(X_i)$, where the duality pairing is 
\begin{equation*}
\left\langle (\mu_i)_{i\in I}, (f_i)_{i\in I}\right\rangle
:=\sum_{i\in I}\int_X f_i\, {\rm d}\mu_i.
\end{equation*}
We will use this fact in the case where $X_i=X$ for each $i \in I$, where it entails that
for every bounded linear functional $\Phi$ on $\bigoplus_{i\in I}
{\rm KR}(X)$ there are Lipschitz maps $f_i$ on $X$ for each $i\in I$
such that $\Phi ((\mu _i)_{i\in I}) =\sum_{i\in I}\int f_i\, {\rm d}\mu _{i}$
for any $(\mu_i)_{i\in I}$ in $\bigoplus_{i\in I} {\rm KR}(X)$.

\section{Preliminaries on expected utility theory}

\subsection{Affine preorders}
By  a {\it preorder} on any nonempty set we mean a reflexive and transitive
binary relation $\succsim$ on that set, and we denote the symmetric and
antisymmetric parts of $\succsim$ by $\sim$ and $\succ$, respectively.
We say that a preorder $\succsim$ on a nonempty convex subset $S$ of
$\Delta(X)$ is {\it weakly affine} if
\begin{equation}
p \succsim q \qquad\mbox{implies} \qquad (1 - \lambda)p + \lambda r
\succsim (1 - \lambda)q + \lambda r
\end{equation}
for every $p, q, r \in S$ and $\lambda \in [0,1)$, and {\it affine} if
\begin{equation}
p \succsim q \qquad \mbox{iff} \qquad (1 - \lambda)p + \lambda r
\succsim (1 - \lambda)q + \lambda r
\end{equation}
for every $p,q,r \in S$ and $\lambda \in [0,1)$. It is easily checked using
the second part of Lemma \ref{basiclemma} below that $\succsim$ is weakly
affine iff it is a convex subset of $\Delta(X) \times \Delta(X)$. If $\succsim$
is affine, so are $\sim$ and $\succ$, but weak affinity of $\succsim$ ensures
only that $\sim$ is weakly affine.
For example, take any integer $n \geq 2$, let $X := \{1, \ldots, n\}$, and let $\succsim$
be the binary relation on $\Delta(X)$ defined by $p \succsim q$
iff ${\rm min}\{i \in X: p\{i\} > 0 \} \geq {\rm min}\{i \in X: q\{i\} > 0\}$. Then $\succsim$
is a weakly affine preorder, but it is not affine, nor is $\succ$ is not weakly affine.

The property of weak affinity was introduced in the 1947 opus of von Neumann
and Morgenstern \cite{vnm} as a fundamental
trait of rationality for one's preferences over risky
alternatives.\footnote{Consider tossing a coin whose probability of coming up
heads is $\lambda$. Now interpret $(1 - \lambda)p + \lambda r$ as the
compound lottery in which this coin is tossed and a reward (in $X$) is
obtained according to $p$ if the outcome of the toss is tails, and according
to $r$ if the outcome is heads. Interpreting $(1 - \lambda)q + \lambda r$
similarly, the affinity property simply says that one's preferences over the
comparison of these two compound lotteries should be consistent with
one's preferences over $p$ and $q$ (no matter what $\lambda$ is).}
This property has been studied extensively in the literature on mathematical
decision theory (where it is often referred to as the ``independence
axiom'').\footnote{See, among countless others, \cite{aumann, dmo,
grandmont, hor, hw, hm}.}
In applications it is extremely rare that one encounters a weakly affine but
not affine preorder. Moreover, the ``rationality'' motivation of weak affinity
also applies to affinity. (In fact, in Section 6.1 we shall demonstrate that there
is no difference between these properties under a mild continuity condition.)
As a result, most authors in this field either implicitly or explicitly work with
affine preorders.

We adopt the following notation:
\begin{equation}
p \underset{\lambda}{\oplus} q := (1 - \lambda)p + \lambda q
\end{equation}
for any $p,q \in \Delta(X)$ and $\lambda \in [0,1]$. Thus a preorder
$\succsim$ on $S$ is affine provided $p \succsim q$ iff $p \oplus_\lambda r \succsim
q \oplus_\lambda r$, for every $p,q,r \in S$ and $\lambda \in [0,1)$.

We note two easy facts about weakly affine preorders:

\begin{lemma}\label{basiclemma}
Let $X$ be a metric space and $\succsim$ a weakly affine preorder
on a nonempty convex subset $S$ of $\Delta(X)$. Then for any
$p, p', q, q', r \in S$ and any $\lambda \in [0,1]$
\begin{equation*}
p \succsim q\qquad \mbox{implies}\qquad
r \underset{\lambda}{\oplus} p \succsim r \underset{\lambda}{\oplus} q
\end{equation*}
and
\begin{equation*}
p \succsim q\mbox{ and }p' \succsim q'\qquad\mbox{implies}\qquad
p \underset{\lambda}{\oplus} p' \succsim q \underset{\lambda}{\oplus} q'.
\end{equation*}
\end{lemma}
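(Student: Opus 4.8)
The plan is to obtain the first assertion directly from the definition of weak affinity after reparametrizing the mixing weight, and then to derive the second assertion from the first by chaining two comparisons with a single use of transitivity. Throughout, convexity of $S$ guarantees that all the intermediate mixtures that appear actually lie in $S$, so that $\succsim$ is defined on them.

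For the first assertion, I would exploit the identities $r \underset{\lambda}{\oplus} p = (1-\lambda)r + \lambda p = p \underset{1-\lambda}{\oplus} r$ and likewise $r \underset{\lambda}{\oplus} q = q \underset{1-\lambda}{\oplus} r$. Starting from $p \succsim q$ and invoking weak affinity with mixing parameter $1-\lambda$ and with $r$ playing the role of the third lottery, one gets $p \underset{1-\lambda}{\oplus} r \succsim q \underset{1-\lambda}{\oplus} r$, which is exactly the desired $r \underset{\lambda}{\oplus} p \succsim r \underset{\lambda}{\oplus} q$. Since weak affinity as stated covers mixing parameters only in $[0,1)$, this takes care of every $\lambda \in (0,1]$; for the remaining value $\lambda = 0$ both sides collapse to $r$, so the conclusion is immediate from reflexivity of $\succsim$.

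For the second assertion, I would route the comparison through the intermediate lottery $p \underset{\lambda}{\oplus} q'$. Applying the first assertion to $p' \succsim q'$ with $p$ as the fixed lottery yields $p \underset{\lambda}{\oplus} p' \succsim p \underset{\lambda}{\oplus} q'$ for every $\lambda \in [0,1]$, while applying weak affinity to $p \succsim q$ with mixing parameter $\lambda$ and $q'$ as the third lottery yields $p \underset{\lambda}{\oplus} q' \succsim q \underset{\lambda}{\oplus} q'$ for $\lambda \in [0,1)$. Transitivity then delivers $p \underset{\lambda}{\oplus} p' \succsim q \underset{\lambda}{\oplus} q'$ for all $\lambda \in [0,1)$, and the endpoint $\lambda = 1$ is handled separately, since there both mixtures reduce to $p'$ and $q'$ and the conclusion is just the hypothesis $p' \succsim q'$.

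I do not expect any genuine obstacle; the only points that require care are bookkeeping ones. First, the definition fixes which argument of $\underset{\lambda}{\oplus}$ carries the weight $\lambda$, so one must perform the substitution $\lambda \mapsto 1-\lambda$ to line up the roles of the lotteries in the first assertion. Second, because weak affinity is postulated only for $\lambda \in [0,1)$, the boundary values $\lambda = 0$ and $\lambda = 1$ have to be dispatched separately, using reflexivity and the hypotheses directly rather than the mixing axiom itself.
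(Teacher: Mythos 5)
Your proof is correct and follows essentially the same route as the paper's: the first assertion via the substitution $\lambda \mapsto 1-\lambda$ in the weak affinity axiom, and the second by chaining two applications of (weak) affinity through an intermediate mixture and invoking transitivity (you pass through $p \oplus_\lambda q'$ where the paper passes through $q \oplus_\lambda p'$, a purely cosmetic difference). Your explicit handling of the endpoint values $\lambda = 0$ and $\lambda = 1$, which the paper leaves implicit, is a welcome bit of extra care.
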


\begin{proof}
The first statement is shown merely by replacing $\lambda$ by $1 - \lambda$,
and the second one follows because if $p \succsim q$ and $p'\succsim q'$, then,
using weak affinity twice, $p \oplus_\lambda p' \succsim q \oplus_\lambda p' \succsim
q \oplus_\lambda q'$.
\end{proof}

\subsection{Continuity of preorders}
For any separable metric space $X$, we say that a preorder $\succsim$ on a nonempty convex
subset $S$ of $\Delta(X)$ is {\it continuous} if it is a closed subset of $S \times S$ relative
to the product topology, using the topology of weak convergence in $S$ on each
factor. If $S \subseteq \Delta_1(X)$, we say that $\succsim$ is {\it $W_1$-continuous}
if it is a closed subset
of $S \times S$ relative to the product topology induced by the $W_1$ metric on $S$.
As convergence with respect to $W_1$ implies weak convergence, every continuous
preorder $\succsim$ on $S$ is $W_1$-continuous.

\subsection{The expected utility theorem}
For any metric space $X$, a total preorder $\succsim$ on a nonempty convex
subset $S$ of $\Delta(X)$ is said to admit a {\it Bernoulli utility} if there
exists a Borel measurable function $u: X \to \mathbb{R}$ such that
\begin{equation}\label{bernoulli}
p\succsim q\qquad\mbox{iff}\qquad
\int_X u\, {\rm d}p \geq \int_X u\, {\rm d}q
\end{equation}
for every $p,q \in S$. In this case we say that $u$ is a {\it Bernoulli utility function}
for $\succsim$. The following theorem is one of the most famous results in decision theory:
\bigskip

\noindent {\bf The expected utility theorem.}
{\it Let $X$ be a separable metric space and $\succsim$ a total preorder on
$\Delta(X)$. Then $\succsim$ admits a continuous and bounded Bernoulli
utility if and only if it is continuous and affine.}
\bigskip

This theorem was first proved by von Neumann and Morgenstern \cite{vnm}
in the case where $X$ is finite. The version given above is due to Grandmont
\cite{grandmont}. The theorem says that so long as one's preferences over
risky prospects (modeled here as the elements of $\Delta(X)$) satisfy the
rationality traits of transitivity and affinity, are decisive (captured here by
the totalness property), and are continuous (a natural regularity
property), then they can be thought of as evaluating any given risky prospect
$p$ by means of the expected utility of $p$, with respect to some
(continuous and bounded) utility function $u: X \to \mathbb{R}$ (over
riskless alternatives). The term ``Bernoulli utility function'' comes from
decision theory due to Bernoulli's famous resolution of the St.\ Petersburg
paradox. The map $p \mapsto \int u\, {\rm d}p$ is often referred to as a
{\it von Neumann-Morgenstern utility function}.

\subsection{The expected multi-utility theorem}
The one assumption of the expected utility theorem that does not seem to
be directly related to rationality is the so-called {\it completeness axiom},
which states that the preorder modelling the underlying preference relation
must be total. Since the seminal contribution of Aumann \cite{aumann},
it has been argued by numerous authors that rational decision makers
may well remain indecisive about outcomes that they find difficult to
compare, or have insufficient information about. This led decision
theorists to search for a functional representation that is suitable for
continuous affine preorders over risky prospects that need not be total.
In fact, \cite{vnm} conjectured, albeit informally, that one could represent
such a preorder by means of the expectation of a vector-valued Bernoulli
utility function. For any metric space $X$, let us say that a preorder $\succsim$
on a nonempty convex subset $S$ of $\Delta(X)$ admits a {\it Bernoulli multi-utility}
if there exists a family $\mathcal{U}$ of Borel measurable real-valued functions on
$X$ such that, for any $p, q \in S$,
\begin{equation}\label{multiorder}
p\succsim q\qquad\mbox{iff}\qquad
\int_X u\, {\rm d}p \geq \int_X u\, {\rm d}q\quad\mbox{for every }u \in \mathcal{U}.
\end{equation}
In this case, we refer to $\mathcal{U}$ as a {\it Bernoulli multi-utility} for
$\succsim$. We may qualify $\mathcal{U}$ with adjectives such as ``continuous''
or ``bounded'' if each member of $\mathcal{U}$ has the same property.

In the case where the riskless prize space $X$ is compact, Dubra, Maccherroni, and
Ok \cite{dmo} showed that one can relax the completeness axiom in the expected
utility theorem simply by replacing Bernoulli utilities with Bernoulli multi-utilities.
This is known as:
\bigskip

\noindent {\bf The expected multi-utility theorem.}
{\it Let $X$ be a compact metric space and $\succsim$ a preorder on
$\Delta(X)$. Then $\succsim$ admits a continuous Bernoulli
multi-utility if and only if it is continuous and affine.\footnote{\cite{evren08}
showed that the compactness hypothesis cannot be relaxed to separability
here so long as one insists on the continuity of Bernoulli multi-utilities. However,
when $X$ is $\sigma$-compact and $\succsim$ is an affine preorder on the set
of all Borel probability measures on $X$ with compact support, $\succsim$
does admit a continuous Bernoulli multi-utility. Given its foundational value,
it is also of interest to see how the expected utility theorem would change
if we took away from it any subset of the hypotheses of totalness, transitivity,
and continuity. This query was recently settled in \cite{hor}.}}
\bigskip

One limitation of these results is that they operate only with bounded Bernoulli
utilities. In fact, the expected multi-utility theorem only applies to a compact
domain; one cannot even set $X = \mathbb{R}$ in its statement. By contrast, in
practice (say, in finance), it is commonplace to work with unbounded utility
functions over prize spaces such as $\mathbb{R}$. In addition, in applications
one often imposes conditions on Bernoulli utility functions which are not warranted by
either of the above theorems. In particular, it is fairly common to work
with differentiable, or at least almost everywhere differentiable, Bernoulli utility
functions on $\mathbb{R}$. This further severs the connection between the
behavioral foundations of the expected (multi-)utility model from its applications.

As we discussed in the introduction, our primary objective here is to identify
precisely which type of affine preorders admit a Bernoulli (multi-)utility that is
Lipschitz continuous on a separable metric space $X$ (but restricting the domain
of preferences to $\Delta_1(X)$). This will largely eliminate the above-mentioned
limitations, because such functions need not be bounded and possess desirable
differentiability properties.

\subsection{Advantages of Lipschitz Bernoulli utilities}
It is well-known that Lipschitz functions behave well from the differentiability
standpoint. In particular, when $X$ is a positive measure subset of a
finite-dimensional Euclidean space (as in most applications of expected
utility theory), any Lipschitz function on $X$ is
differentiable almost everywhere (Rademacher's theorem). There are
infinite-dimensional versions of this result as well.  For instance, Phelps
\cite{phelps} showed that every Lipschitz function on a separable Banach
space is Gateaux differentiable everywhere but on a Gaussian null set. And
from the purely topological perspective, when $X$ is an Asplund space ---
for instance, when $X$ is any Banach space with a separable dual, or any
reflexive Banach space --- a famous theorem of Preiss \cite{preiss} shows that
any Lipschitz function on $X$ is Fr\'{e}chet differentiable on a dense subspace of
$X$. Thus, behavioral characterizations of preorders that admit Lipschitz
Bernoulli utilities, such as the one obtained in the next section, build a bridge
between behavioral properties (that is, conditions that are expressible by
means of the preorder alone) and the seemingly technical property of
differentiability.

Another advantage of Lipschitz functions concerns their optimization. If only
to motivate interest in Lipschitz Bernoulli utility functions, we illustrate this
by means of a simple example from finance. Consider a market for financial
securities with $n$ many different stocks. Let us model the returns from these
stocks by an $\mathbb{R}^n$-valued random variable $a$ on some probability
space, whose joint distribution is given by a Borel probability measure $\mu$
on $\mathbb{R}^n$ with bounded support. A {\it portfolio} in this setting is
any $n$-vector $\alpha$ whose expected utility for an investor with a continuous
Bernoulli utility $u \in C(\mathbb{R})$ is $F(\alpha)$, where $F: \mathbb{R}^n
\to \mathbb{R}$ is defined as $F(\alpha) := \int u(\alpha \cdot a)\,
\mu({\rm d}a)$. Suppose the current wealth of the investor is $W > 0$. Then the
optimal portfolio choice problem for this individual is to maximize $F(\alpha)$
over all $\alpha \in \mathbb{R}^n$ with $\pi\cdot \alpha \leq W$, where $\pi$
is the given vector of unit stock prices.

Differential calculus is of little help in studying this simple portfolio optimization
problem. But if $u$ is Lipschitz, we can say quite a bit about it, and may even be
able to solve it precisely, using nonsmooth analysis. Indeed, if $u$ is Lipschitz,
then so is $F$ because
\begin{eqnarray*}
F(\alpha) - F(\beta) &=& \int_{\mathbb{R}^n} \left(u(\alpha\cdot a) -
u(\beta\cdot a)\right)\, \mu({\rm d}a)\\
&\leq& L \int_{\mathbb{R}^n} |(\alpha - \beta)\cdot a|\, \mu({\rm d}a)\\
&\leq& K\|\alpha - \beta\|
\end{eqnarray*}
for every $\alpha, \beta \in \mathbb{R}^n$, where $L$ is the Lipschitz number of
$u$ and $K := L\int \|\cdot\|\, {\rm d}\mu$. (Here $\|\cdot\|$
stands for the Euclidean norm.) It follows that if $\alpha^*$ is a solution to our
maximization problem, we have ${\bf 0} \in \partial F(\alpha^*)$, where $\partial$ is
(Clarke's) generalized gradient operator, which is well-defined since $u$ is
Lipschitz. Next, for any open subset $O$ of $\mathbb{R}^n$,
$\partial F(\alpha^*)$ is the convex hull of the set of all vectors of the form
$\nabla F(\alpha_m)$ where $(\alpha_m)$ is a sequence in $O$ such that $F$
is differentiable at each $\alpha_m$ and $\alpha_m \to \alpha^*$. Combining
this fact with Carath\'{e}odory's theorem, we may conclude that for every
sufficiently small $\epsilon > 0$ there exist $n+1$ many sequences
$(\alpha_{m,1}), \ldots, (\alpha_{m,n+1})$ and a vector $(\lambda_1, \ldots,
\lambda_{n+1})$ in the $n$-dimensional unit simplex such that (i) $F$ is
differentiable at $\alpha_{m,i}$ for each $m \geq 1$ and $i = 1, \ldots, n+1$;
(ii) $\alpha_{m,i} \to \alpha^*$ for each $i = 1, \ldots, n+1$; and (iii)
\begin{equation*}
\lim_m \sum_{i=1}^{n+1} \lambda_i \nabla F(\alpha_{m,i}) = {\bf 0}.
\end{equation*}
Thus, thanks to the Lipschitz property of $u$ alone, differential calculus enters
into the analysis of the optimal portfolio problem from the back door.

Due to such advantages of Lipschitz functions, one may choose to work within
environments in which individuals are modeled as expected utility maximizers
with Lipschitz Bernoulli utilities. (Indeed, in applications one often restricts attention
either to such functions or to functions with additional structure.) It is not a priori
clear, however, what sort of preference relations would actually allow for an expected
utility representation with a Lipschitz Bernoulli (multi-)utility. We give our answer to
this question in the next section.

\section{Lipschitz Bernoulli utilities}

\subsection{A Lipschitz property for preorders on $\Delta_1(X)$}

We will study the following property.

\begin{defi}\label{lipdef}
Let $X$ be a separable metric space. We say that a preorder
$\succsim$ on $\Delta_1(X)$ is {\it Lipschitz} if for every $p, q \in \Delta_1(X)$ such
that $q \succsim p$ fails, there is a $K = K(p,q) > 0$ such that, for any $p', q' \in \Delta_1(X)$,
\begin{equation}\label{lipcon}
q \underset{\lambda}{\oplus} q' \succsim p \underset{\lambda}{\oplus} p'\quad
\mbox{fails whenever }0 \leq \lambda < \frac{K}{K + W_1(p',q')}.
\end{equation}
\end{defi}

This condition quantitatively expresses the idea that if $q \succsim p$ fails, then the same
comparison between the compound lotteries\footnote{Here, as in the justification of affinity,
we regard $r \oplus_\lambda r'$ as a compound lottery in which we first chose $r$
with probability $1-\lambda$ and $r'$ with probability $\lambda$, and then use the chosen
measure to determine a reward in $X$.}
$q \oplus_\lambda q'$ and $p \oplus_\lambda p'$ should also fail, provided $\lambda$
is sufficiently small. Continuity of $\succsim$ would only ensure that sliding $p$ and $q$ towards
two other points in the space $\Delta_1(X)$ by a small amount would not change
their order relation; the substance of
(\ref{lipcon}) lies in the specific form of the upper bound $\frac{K}{K + W_1(p',q')}$ on $\lambda$.
(Although we do not include any continuity assumption in Definition \ref{lipdef}, it will
follow from Theorem \ref{mainthm} below that the stated Lipschitz condition implies
$W_1$-continuity.)

This upper bound expresses a sort of uniformity of preferences in relation to the given metric
on $X$. It can be informally motivated as follows.

Suppose $q \succsim p$ fails. We can characterize how badly it fails by considering
for which $p'$ and $q'$ we do have $q \oplus_{1/2}q' \succsim p \oplus_{1/2} p'$. We might
say that these are the $p'$ and $q'$ for which $q'$ is preferred to $p'$ strongly enough that
even a 50\% chance of getting $q'$ (and 50\% chance of getting $q$) is unambiguously better
than a 50\% chance of getting $p'$ (and a 50\% chance of getting $p$). We therefore define
\begin{equation*}
K = \inf\left\{a > 0:
q \underset{1/2}{\oplus} q' \succsim p \underset{1/2}{\oplus} p'
\mbox{ for some $p'$, $q'$ with }W_1(p', q') < a\right\}.
\end{equation*}
This is the critical distance such that if $p'$ and $q'$ are any closer, then even if $q'$ is
preferred to $p'$, getting $q$ or $q'$ with even odds still will not be unambiguously preferable to getting $p$
or $p'$ with even odds. If we interpret $W_1(p',q')$ as measuring how similar $p'$ and $q'$
are\footnote{We might first interpret $d(x,y)$ as measuring the similarity of two prizes
$x,y \in X$; then the optimal transportation interpretation of the Wasserstein 1-metric makes
$W_1$ a natural measure of similarity in $\Delta_1(X)$.}, then it is reasonable to assume that
$K > 0$, on the grounds that, given $q \succsim p$ fails, $q \oplus_{1/2} q' \succsim p \oplus_{1/2} p'$
should also fail whenever $p'$
and $q'$ are sufficiently similar. Thus, for all $p'$ and $q'$ with $W_1(p',q')$ less than the critical
value $K$, the lottery which gives even odds of $q$ or $q'$ is not
preferred to the lottery which gives even odds of $p$ or $p'$. The key assumption now is
that this conclusion scales; for example, if $W_1(p', q') < 2K$ then $q \oplus_{1/3} q'$
is not preferred to $p \oplus_{1/3} p'$ --- the reasoning being that since $W_1(p',q')$ is now
less than twice that critical value, 2:1 odds are insufficient compensation. The general principle
is that if $W_1(p',q') < aK$ for some $a > 0$, then $a:1$ odds are insufficient compensation;
$q \oplus_\lambda q' \succsim p \oplus_\lambda p'$ must fail for
any $\lambda < \frac{1}{1 + a}$, since $\lambda_0 = \frac{1}{1 + a}$ is the value at which
$\frac{1 - \lambda_0}{\lambda_0} = \frac{a}{1}$. This shows that
$\lambda < \frac{K}{K + W_1(p',q')}$ --- obtained by putting $a = W_1(p', q')/K$ in $\lambda <
\frac{1}{1 + a}$ ---
will ensure that $q \oplus_\lambda q' \succsim p \oplus_\lambda p'$ fails, which is precisely what
(\ref{lipcon}) says.

Let us now verify that preorders arising from Lipschitz Bernoulli multi-utilities
are Lipschitz in the sense of Definition \ref{lipdef}.

\begin{prop}\label{liputilities}
Let $X$ be a separable metric space and let $\mathcal{U}$ be a family of Lipschitz
functions on $X$. Then the preorder $\succsim$ on $\Delta_1(X)$ defined by
\begin{equation*}
p\succsim q\qquad\mbox{iff}\qquad
\int_X u\, {\rm d}p \geq \int_X u\, {\rm d}q\quad\mbox{for every }u \in \mathcal{U}
\end{equation*}
is affine and Lipschitz.
\end{prop}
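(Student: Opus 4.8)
The plan is to verify the two properties separately, beginning with affinity since it is routine and then handling the Lipschitz property, which is the substantive part. For affinity, I would observe that for each fixed $u \in \mathcal{U}$ the map $p \mapsto \int_X u\, {\rm d}p$ is affine in $p$, so that $\int u\, {\rm d}(p\oplus_\lambda r) = (1-\lambda)\int u\, {\rm d}p + \lambda \int u\, {\rm d}r$. The inequality $\int u\, {\rm d}p \geq \int u\, {\rm d}q$ is therefore equivalent to $\int u\, {\rm d}(p\oplus_\lambda r) \geq \int u\, {\rm d}(q\oplus_\lambda r)$ for any $\lambda \in [0,1)$, because subtracting the common term $\lambda \int u\, {\rm d}r$ and dividing by $1-\lambda > 0$ recovers the original inequality. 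Quantifying over all $u \in \mathcal{U}$ gives $p\succsim q$ iff $p\oplus_\lambda r \succsim q\oplus_\lambda r$, which is affinity. (One should also note that $\succsim$ is reflexive and transitive, hence a genuine preorder, which is immediate.)

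For the Lipschitz property, suppose $q \succsim p$ fails. Then by definition there is some $u \in \mathcal{U}$ with $\int_X u\, {\rm d}q < \int_X u\, {\rm d}p$; set $\delta := \int_X u\, {\rm d}(p - q) > 0$. The idea is to choose $K$ in terms of this gap $\delta$ and the Lipschitz number $L$ of $u$, and then to estimate how far the mixtures can drift. Writing out the relevant quantity, I would compute
\begin{equation*}
\int_X u\, {\rm d}\bigl(p\oplus_\lambda p' - q\oplus_\lambda q'\bigr)
= (1-\lambda)\int_X u\, {\rm d}(p - q) + \lambda \int_X u\, {\rm d}(p' - q').
\end{equation*}
The first term is $(1-\lambda)\delta > 0$, while the second term is bounded below by $-\lambda L\, W_1(p',q')$, using the Kantorovich-Rubinstein duality theorem (\ref{krthm}): since $u/L$ is $1$-Lipschitz, $\bigl|\int_X u\, {\rm d}(p'-q')\bigr| \leq L\, W_1(p',q')$. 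Hence the whole expression is at least $(1-\lambda)\delta - \lambda L\, W_1(p',q')$, and this is strictly positive precisely when $\lambda < \frac{\delta}{\delta + L\, W_1(p',q')}$.

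It remains to package this into the required form. Setting $K := \delta / L$ (assuming $L > 0$; the degenerate case $L = 0$ forces $\delta = 0$, contradicting $\delta > 0$, so in fact $L > 0$ automatically), the threshold becomes $\frac{\delta}{\delta + L\, W_1(p',q')} = \frac{K}{K + W_1(p',q')}$ after dividing numerator and denominator by $L$. Thus for every $\lambda$ with $0 \leq \lambda < \frac{K}{K+W_1(p',q')}$ we have $\int_X u\, {\rm d}(p\oplus_\lambda p') > \int_X u\, {\rm d}(q\oplus_\lambda q')$, which means $q\oplus_\lambda q' \succsim p\oplus_\lambda p'$ fails, exactly as (\ref{lipcon}) demands. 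The main subtlety to watch is that $K$ must depend only on $p$ and $q$, not on $p', q'$: this is satisfied because $\delta$ and $L$ are fixed once the witnessing $u$ is chosen, and the bound on $\int u\, {\rm d}(p'-q')$ is uniform in $p', q'$ thanks to the duality estimate. A minor point worth confirming is that all integrals in question are finite, which holds because $p, q, p', q' \in \Delta_1(X)$ and $u$ is Lipschitz, so by Proposition \ref{altdelta1} $u$ is integrable against each of these measures.
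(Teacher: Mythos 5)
Your proof is correct and follows essentially the same route as the paper's: witness the failure of $q \succsim p$ by some $u \in \mathcal{U}$, use the Kantorovich--Rubinstein duality bound $|\int_X u\, {\rm d}(p'-q')| \leq L\, W_1(p',q')$, and check by direct computation that the mixture inequality is strict for $\lambda < \frac{K}{K+W_1(p',q')}$. The only cosmetic difference is that the paper normalizes $u$ to have Lipschitz number $1$ (so that $K = \int u\,{\rm d}(p-q)$ directly), whereas you carry $L$ through and set $K = \delta/L$; these are the same choice of $K$.
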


\begin{proof}
Affinity is straightforward. To verify the Lipschitz condition, take any $p, q \in \Delta_1(X)$
for which $q \succsim p$ fails. Then there exists $u \in \mathcal{U}$ such that $U(p) > U(q)$,
where $U: \Delta_1(X) \to \mathbb{R}$ is defined by $U(p) := \int u\, {\rm d}p$.
Since $u$ can be scaled without affecting the preorder, we may assume without loss
of generality that its Lipschitz number is 1. It follows from the Kantorovich-Rubinstein
theorem (recall (\ref{krthm})) that
\begin{equation}\label{ineq}
U(q') - U(p') = \int_X u\, {\rm d}(q' - p') \leq W_1(p',q')
\end{equation}
for any $p', q' \in \Delta_1(X)$.

Set $K := U(p) - U(q)$, let $p',q' \in \Delta_1(X)$, and choose
$0 \leq \lambda < \frac{K}{K + W_1(p',q')}$; we wish to show that
$q \oplus_\lambda q' \succsim p \oplus_\lambda p'$ fails. But
\begin{eqnarray*}
U(q \underset{\lambda}{\oplus} q')
&=& (1 - \lambda)U(q) + \lambda U(q')\cr
&\leq& (1 - \lambda)(U(p) - K) + \lambda(U(p') + W_1(p',q'))\cr
&=& (1 - \lambda)U(p) + \lambda U(p') - (1 - \lambda)K + \lambda W_1(p',q')\cr
&=& U(p \underset{\lambda}{\oplus} p') + \lambda(K + W_1(p',q')) - K\cr
&<& U(p \underset{\lambda}{\oplus} p') + K - K\cr
&=& U(p \underset{\lambda}{\oplus} p'),
\end{eqnarray*}
using (\ref{ineq}) in the first inequality and the choice of $\lambda$ in the second.
Thus $q \oplus_\lambda q' \succsim p \oplus_\lambda p'$ indeed fails, as desired.
\end{proof}

The Lipschitz property defined above seems new. A vaguely similar condition has been introduced
in \cite{dlrs} and studied further in \cite{sarver, es1, es2}, which applies to total preorders defined
on the collection of all nonempty subsets of $\Delta(X)$ for some finite set $X$. More topically,
there is a definition of a Lipschitz property for preorders defined on any metric space due to Levin
\cite{levin} which is immediately applicable to the present setting. We will explain the connection
between our definition and that of \cite{levin} for affine preorders in Section 4.7.

\subsection{Stochastic orders are Lipschitz}
Proposition \ref{liputilities} allows one to identify many interesting affine Lipschitz preorders.
In this section, we verify that the usual (multivariate) stochastic orders are Lipschitz. Such partial
orders are quite important, as they are widely used in a plethora of fields, ranging from the theories
of risk measurement and reliability to quantum mechanics. (See \cite{ss} for an extensive treatment
of stochastic orders of various types.)

The {\it univariate stochastic order}  $\succcurlyeq_{\rm st}$ on $\Delta(\mathbb{R})$
is defined as
\begin{equation*}
p \succcurlyeq_{\rm st} q\qquad\mbox{iff}\qquad
p((-\infty, a]) \leq q((-\infty, a])\hbox{ for every }a \in \mathbb{R}.
\end{equation*}
(Economists refer to this partial order as {\it first-order stochastic dominance}
and use it to determine if one monetary lottery is better than another in an
unambiguous sense.) For any $a \in \mathbb{R}$ and $n \in \mathbb{N}$ let
$u_{a,n}$ be the continuous function which is constantly $0$ on $(-\infty, a]$, constantly $1$
on $[a + \frac{1}{n}, \infty)$, and linear on $[a, a + \frac{1}{n}]$.
Since $p((-\infty, a]) = 1 - \lim_{n\to \infty} \int u_{n,a}\, {\rm d}p$, it follows that
$\mathcal{U} := \{u_{a,n}: a \in \mathbb{R}$ and $n \in \mathbb{N}\}$ is a collection
of Lipschitz functions on $\mathbb{R}$ such that $p\succcurlyeq_{\rm st} q$ iff $\int u\, {\rm d}p
\geq \int u\, {\rm d}q$ for each $u \in \mathcal{U}$. Thus $\succcurlyeq_{\rm st}$ is an affine
Lipschitz partial order on $\Delta_1(\mathbb{R})$.

This finding extends well beyond the univariate case. Suppose $X = (X, \geq, +, d)$ is any (partially)
ordered abelian group equipped with a separable translation-invariant metric.  A {\it lower subset}
of $X$ is a subset $S$ with the property that $x \in S$ and $x \geq y$ imply $y \in S$. We define the
{\it stochastic order $\succcurlyeq_{\rm st}$} on $\Delta(X)$ induced by $\geq$ by
\begin{equation*}
p \succcurlyeq_{\rm st} q\qquad\mbox{iff}\qquad
p(S) \leq q(S)\quad\mbox{for every closed lower subset $S$ of $X$}.
\end{equation*}
The following well-known characterization of $\succcurlyeq_{\rm st}$ was obtained by
Kamae, Krengel, and O'Brien \cite{kko}: $p \succcurlyeq_{\rm st} q$ iff $\int u\, {\rm d}p
\geq \int u\, {\rm d}q$ for every bounded and $\geq$-increasing Borel measurable
$u: X \to \mathbb{R}$. But in fact ``Borel measurable'' can be replaced by ``Lipschitz'' in
this characterization. This is a consequence of the following lemma.

\begin{lemma}
Let $X = (X, \geq, +, d)$ be an ordered abelian group equipped with a separable
translation-invariant metric and let $S$ be a lower subset of $X$. Then
$d(\cdot, S)$ is a $\geq$-increasing function.
\end{lemma}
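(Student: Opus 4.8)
The plan is to unwind the definition of the distance function and lean on the two translation-invariance hypotheses. Write $d(x,S) := \inf_{s \in S} d(x,s)$. Since $X$ is an ordered abelian group, its order is translation invariant, so $x \geq y$ holds if and only if $h := x - y$ satisfies $h \geq 0$. The goal is then to show that $x \geq y$ implies $d(x,S) \geq d(y,S)$, that is, that $d(\cdot,S)$ is $\geq$-increasing. (If $S = \emptyset$ the claim is vacuous, so I may assume $S$ is nonempty.)

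The core observation is that shifting down by a nonnegative element keeps us inside a lower set. First I would fix an arbitrary $s \in S$ and set $s' := s - h$. Because $h \geq 0$ we have $s' \leq s$, and since $S$ is a lower set containing $s$, it follows that $s' \in S$. Next I would invoke translation invariance of the metric, adding $h$ to both arguments:
\begin{equation*}
d(y, s') = d(y, s - h) = d(y + h, s) = d(x, s),
\end{equation*}
where the last equality uses $y + h = x$. Hence $d(y,S) \leq d(y, s') = d(x,s)$.

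Finally I would take the infimum over all $s \in S$ on the right-hand side, obtaining $d(y,S) \leq d(x,S)$, which is precisely the desired monotonicity. The argument is short, and there is no real obstacle; the one point I would take care to state explicitly is the interplay of the two translation-invariance assumptions. Translation invariance of $\geq$ is what lets us pass from $x \geq y$ to $h \geq 0$ and thereby manufacture a point $s' = s - h$ of $S$ lying near $y$, while translation invariance of $d$ is what guarantees that this $s'$ is exactly as close to $y$ as $s$ is to $x$. Note that neither separability nor completeness of $X$ is needed for the lemma itself; separability enters only afterward, when $d(\cdot,S)$ is used to construct Lipschitz $\geq$-increasing approximants to indicators of closed lower sets.
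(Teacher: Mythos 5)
Your argument is correct and is essentially the same as the paper's: the paper takes $z \in S$, adds $z - x$ to both sides of $x \geq y$ to conclude $z - x + y \in S$, and then uses translation invariance of $d$ to get $d(x,z) = d(y, z - x + y) \geq d(y,S)$ before taking the infimum, which is exactly your construction with $s' = s - h$. No substantive differences.
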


\begin{proof}
Take any $x, y \in X$ with $x \geq y$. Then for any $z \in S$, adding $z - x$ to both sides of
$x \geq y$ yields $z \geq z - x + y$, so that $z - x + y \in S$. Thus $d(x,z) = d(y, z - x + y) \geq d(y,S)$,
and taking the infimum over $z$ yields $d(x, S) \geq d(y,S)$. This shows that $d(\cdot, S)$ is
$\geq$-increasing.
\end{proof}

\begin{prop}
Let $X = (X, \geq, +, d)$ be an ordered abelian group equipped with a separable translation-invariant
metric. Then the stochastic order on $\Delta_1(X)$ induced by $\geq$ is Lipschitz and affine.
\end{prop}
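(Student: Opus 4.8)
The plan is to exhibit a family of Lipschitz functions that represents $\succcurlyeq_{\rm st}$ and then appeal to Proposition \ref{liputilities}, which yields both affinity and the Lipschitz property at once. Let $\mathcal{U}$ be the collection of all $\geq$-increasing Lipschitz functions on $X$. I would prove that $p \succcurlyeq_{\rm st} q$ if and only if $\int u\, {\rm d}p \geq \int u\, {\rm d}q$ for every $u \in \mathcal{U}$; since each such $u$ is Lipschitz, Proposition \ref{liputilities} then completes the proof. All the work lies in upgrading the Kamae--Krengel--O'Brien characterization of $\succcurlyeq_{\rm st}$ from bounded $\geq$-increasing Borel test functions to (possibly unbounded) $\geq$-increasing Lipschitz ones.

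For the forward implication, assume $p \succcurlyeq_{\rm st} q$ and fix $u \in \mathcal{U}$. The difficulty is that $u$ may be unbounded, so the Kamae--Krengel--O'Brien characterization does not apply to it directly. I would truncate, setting $u_n := \max(\min(u, n), -n)$; each $u_n$ is $\geq$-increasing, Borel, and bounded, so $\int u_n\, {\rm d}p \geq \int u_n\, {\rm d}q$ for every $n$. Because $u$ is Lipschitz with some Lipschitz number $L$, we have $|u| \leq |u(e)| + L\, d(\cdot, e)$ at the base point $e$, and this bound is integrable against both $p$ and $q$ since $p, q \in \Delta_1(X)$ (Proposition \ref{altdelta1}). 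As $u_n \to u$ pointwise with $|u_n| \leq |u|$, dominated convergence gives $\int u\, {\rm d}p \geq \int u\, {\rm d}q$.

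For the reverse implication, assume $\int u\, {\rm d}p \geq \int u\, {\rm d}q$ for all $u \in \mathcal{U}$ and fix a closed lower subset $S$ of $X$. I would test against $f_n(x) := \max(1 - n\, d(x, S), 0)$: each $f_n$ is Lipschitz, and by the preceding lemma $d(\cdot, S)$ is $\geq$-increasing, so each $f_n$ is $\geq$-decreasing and hence $-f_n \in \mathcal{U}$. The hypothesis applied to $-f_n$ gives $\int f_n\, {\rm d}p \leq \int f_n\, {\rm d}q$. Since $S$ is closed, $f_n \to 1_S$ pointwise, and as $0 \leq f_n \leq 1$, bounded convergence yields $p(S) \leq q(S)$; because $S$ was arbitrary among closed lower subsets, this is precisely $p \succcurlyeq_{\rm st} q$.

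The main obstacle is the forward direction: Lipschitz functions on an unbounded ordered group need not be bounded, so the Kamae--Krengel--O'Brien theorem cannot be invoked verbatim. The restriction to $\Delta_1(X)$ is exactly what makes the truncation-plus-dominated-convergence step legitimate, since it guarantees that $d(\cdot, e)$, and therefore every Lipschitz function, is integrable against the measures in play.
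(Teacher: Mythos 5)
Your proof is correct, and its overall architecture coincides with the paper's: both reduce the statement to Proposition \ref{liputilities} by exhibiting a Lipschitz Bernoulli multi-utility for $\succcurlyeq_{\rm st}$, both rely on the lemma that $d(\cdot,S)$ is $\geq$-increasing for a lower set $S$, and your test functions $f_n = \max(1 - n\,d(\cdot,S),0)$ in the reverse direction are exactly the paper's $u_{S,n} = \min(n\,d(\cdot,S),1)$ after the affine substitution $f \mapsto 1-f$ (so the two reverse implications are the same computation, one producing $p(S) \leq q(S)$ directly and the other via $p(X\setminus S) \geq q(X\setminus S)$). The one genuine divergence is the choice of $\mathcal{U}$: the paper takes only the countably-indexed family $\{u_{S,n}\}$, all of whose members are bounded, so the Kamae--Krengel--O'Brien theorem applies to them verbatim and the forward implication is immediate. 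You instead take $\mathcal{U}$ to be \emph{all} $\geq$-increasing Lipschitz functions, which forces the truncation-plus-dominated-convergence argument you describe; that argument is sound (the truncations $\max(\min(u,n),-n)$ are indeed bounded, Borel, and $\geq$-increasing, and the domination $|u| \leq |u(e)| + L\,d(\cdot,e)$ is integrable precisely because $p,q \in \Delta_1(X)$). What your route buys is a slightly stronger statement than the paper actually proves: it establishes that the KKO characterization of $\succcurlyeq_{\rm st}$ on $\Delta_1(X)$ remains valid when tested against \emph{unbounded} increasing Lipschitz functions, which substantiates the paper's passing remark that ``Borel measurable'' can be replaced by ``Lipschitz'' in that characterization. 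What the paper's route buys is economy: by choosing a bounded family it sidesteps the obstacle you correctly identify as the main one, at the cost of representing the order by a smaller (but equally serviceable) multi-utility.
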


\begin{proof}
For any closed lower subset $S$ of $X$ and any $n > 0$, let $u_{S, n}$ be the function
$u_{S, n}(\cdot) = {\rm min}(nd(\cdot, S), 1)$.
Each of these functions is clearly Lipschitz, and is $\geq$-increasing by the previous lemma.
Let $\mathcal{U} = \{u_{S, n}: S$ is a closed lower subset of $X$ and $n > 0\}$. We will show
that $p \succcurlyeq_{\rm st} q$ iff $\int u\, {\rm d}p \geq \int u\, {\rm d}q$ for every
$u \in \mathcal{U}$; the desired conclusion will then follow from Proposition \ref{liputilities}.

Since every $u \in \mathcal{U}$ is bounded, $\geq$-increasing, and Borel, the forward
implication follows from the result from \cite{kko} mentioned above. For the
reverse implication, suppose $\int u\, {\rm d}p \geq \int u\, {\rm d}q$
for every $u \in \mathcal{U}$. Fix a closed lower subset $S$ of $X$. Then as $n \to \infty$
the functions $u_{S, n}$ converge pointwise to $1_{X\setminus S}$, so it follows from the
dominated convergence theorem that
\begin{equation*}
p(X \setminus S) = \int_X 1_{X \setminus S}\, {\rm d}p \geq
\int_X 1_{X \setminus S}\, {\rm d}q = q(X \setminus S),
\end{equation*}
and therefore $p(S) \leq q(S)$. Since $S$ was an arbitrary closed lower subset of $X$, we
conclude that $p \succcurlyeq_{\rm st} q$.
\end{proof}

Thus, virtually any stochastic order one encounters in practice is a Lipschitz affine
partial order.

\subsection{The expected Lipschitz multi-utility theorem}
The goal of this section is to prove a converse to Proposition \ref{liputilities} which states
that any Lipschitz affine preorder $\succsim$ on $\Delta_1(X)$ admits a Lipschitz
Bernoulli multi-utility. This is the main result of the paper. We will also show that in the
special case where $\succsim$ is total, $\mathcal{U}$ can be taken to consist of a single
Lipschitz Bernoulli utility function $u$ for $\succsim$ (i.e., (\ref{bernoulli}) holds).

\begin{lemma}\label{mainlemma}
Let $X$ be a separable metric space and let $\succsim$ be a Lipschitz affine preorder
on $\Delta_1(X)$. Then $C := \{\alpha(p - q): \alpha \geq 0$ and $p\succsim q\}$ is a
closed and convex subset of ${\rm KR}(X)$, and for any $p, q \in \Delta_1(X)$ we have
\begin{equation}\label{corder}
p \succsim q\qquad\mbox{iff}\qquad p - q \in C.
\end{equation}
\end{lemma}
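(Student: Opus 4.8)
The plan is to verify the three assertions in the order: convexity of $C$, the equivalence \eqref{corder}, and finally closedness of $C$, since only the last of these will call on the Lipschitz hypothesis. Convexity is a direct consequence of weak affinity: given two generators $\alpha_1(p_1-q_1)$ and $\alpha_2(p_2-q_2)$ of $C$ with $p_i\succsim q_i$, and $t\in[0,1]$, I would set $\beta_1:=t\alpha_1$, $\beta_2:=(1-t)\alpha_2$ and (in the nontrivial case $\beta_1+\beta_2>0$) rewrite their convex combination as $(\beta_1+\beta_2)(p-q)$, where $p$ and $q$ are the corresponding convex combinations of the $p_i$ and $q_i$. Writing $p=p_1\underset{s}{\oplus}p_2$ and $q=q_1\underset{s}{\oplus}q_2$ for the appropriate $s$, the second part of Lemma \ref{basiclemma} gives $p\succsim q$, so the combination lies in $C$. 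The forward direction of \eqref{corder} is the trivial case $\alpha=1$.

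For the converse direction of \eqref{corder}, suppose $p-q=\alpha(p'-q')$ with $\alpha\geq 0$ and $p'\succsim q'$. The device I would use is to put $\lambda:=\frac{\alpha}{1+\alpha}\in[0,1)$, so that $\frac{\lambda}{1-\lambda}=\alpha$; then the hypothesis $p-q=\alpha(p'-q')$ is exactly the identity $p\underset{\lambda}{\oplus}q'=q\underset{\lambda}{\oplus}p'$. Applying the first part of Lemma \ref{basiclemma} to $p'\succsim q'$ gives $q\underset{\lambda}{\oplus}p'\succsim q\underset{\lambda}{\oplus}q'$, hence $p\underset{\lambda}{\oplus}q'\succsim q\underset{\lambda}{\oplus}q'$, and affinity then cancels the common $q'$ to yield $p\succsim q$.

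The heart of the matter is closedness, and the key realization I would exploit is that the Lipschitz axiom, read through \eqref{corder}, is exactly the statement that $C$ keeps a uniform distance from each point of its complement. Fix $p_0,q_0$ with $p_0\succsim q_0$ failing, let $K>0$ be the associated constant, and set $u_0:=p_0-q_0$. By \eqref{corder} the relation $p_0\underset{\lambda}{\oplus}q'\succsim q_0\underset{\lambda}{\oplus}p'$ holds iff $(1-\lambda)u_0-\lambda(p'-q')\in C$, so the Lipschitz condition asserts that this membership fails whenever $\lambda<\frac{K}{K+W_1(p',q')}$. Now I would substitute $w:=\frac{\lambda}{1-\lambda}(p'-q')$ and recall from Proposition \ref{krprop} that $W_1(p',q')=\|p'-q'\|_{\rm KR}$. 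Since $C$ is a cone, $(1-\lambda)u_0-\lambda(p'-q')=(1-\lambda)(u_0-w)$ fails to lie in $C$ iff $u_0-w$ does, while the constraint on $\lambda$ transforms precisely into $\|w\|_{\rm KR}<K$. As $p',q'$ range over $\Delta_1(X)$ and $\lambda$ over its interval, $w$ sweeps out the whole open $K$-ball about the origin in ${\rm KR}(X)$ (every element of ${\rm KR}(X)$ being of the form $\frac{\lambda}{1-\lambda}(p'-q')$ by Proposition \ref{krprop}). Thus the Lipschitz axiom is equivalent to saying that the open ball $\{\mu:\|\mu-u_0\|_{\rm KR}<K\}$ is disjoint from $C$.

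With this reformulation, closedness follows quickly: a nonzero $\mu\notin C$ has Jordan form $\mu=\alpha u_0$ with $\alpha=\mu^+(X)>0$ and $u_0=p_0-q_0$ for $p_0:=\mu^+/\alpha$, $q_0:=\mu^-/\alpha$; since $C$ is a cone and $\mu\notin C$, also $u_0\notin C$, i.e. $p_0\succsim q_0$ fails, and the $\alpha$-dilate of the ball above shows that $\{\nu:\|\nu-\mu\|_{\rm KR}<\alpha K\}$ misses $C$. As $0\in C$, the complement of $C$ is open. I expect the one genuinely delicate step to be this closedness argument --- specifically, recognizing that the peculiar threshold $\frac{K}{K+W_1(p',q')}$ is calibrated so that the reparametrization $w=\frac{\lambda}{1-\lambda}(p'-q')$, together with the Kantorovich--Rubinstein identity $W_1=\|\cdot\|_{\rm KR}$, turns it into the clean norm bound $\|w\|_{\rm KR}<K$. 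A minor point worth care is that $\mu\mapsto\mu^+(X)$ is not ${\rm KR}$-continuous, but this is harmless here since it is used only to pick, for a single $\mu\notin C$, one representative $u_0\notin C$, after which the cone property transfers the ball.
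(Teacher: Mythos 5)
Your proof is correct. The convexity argument and the derivation of \eqref{corder} are the same as the paper's (the paper cancels the common summand $p'$ where you cancel $q'$; this is immaterial). Where you genuinely diverge is in the closedness step. The paper also aims to show that each $\sigma = \alpha(q_0-p_0) \notin C$ is the center of an open ball of radius $\alpha K$ disjoint from $C$, but it gets there in two stages: first a claim that $\sigma \oplus_\theta \mu \notin C$ for all $\theta < \frac{\alpha K}{\alpha K + \|\mu\|_{\rm KR}}$, proved by normalizing the combination $\frac{1-\lambda}{\alpha}\sigma + \frac{\lambda}{\beta}\mu$ and computing the threshold $\theta(\lambda_0)$, and then a limiting argument applying this claim to $\mu_N := \sigma + N\mu'$ and letting $N \to \infty$ to convert the convex-combination statement into an actual ball. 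Your reparametrization $w = \frac{\lambda}{1-\lambda}(p'-q')$, which divides out the factor $1-\lambda$ via the cone property at the outset, shows directly that the Lipschitz condition of Definition \ref{lipdef} for the pair $(q_0,p_0)$ --- read through \eqref{corder} and the identity $W_1(p',q') = \|p'-q'\|_{\rm KR}$ of Proposition \ref{krprop} --- is \emph{equivalent} to the disjointness from $C$ of the open $K$-ball about $p_0-q_0$; this collapses the paper's two stages into one and eliminates the limit entirely, at the price of a surjectivity check (that $w$ really sweeps out the whole open ball), which you supply correctly via Proposition \ref{krprop}. The only points left implicit --- that $\lambda<1$ throughout so the division by $1-\lambda$ is legitimate, and that $\beta=\nu^+(X)>0$ for nonzero $\nu$ --- are immediate, and your closing caveat about the discontinuity of $\mu\mapsto\mu^+(X)$ is correctly dismissed, since it is used only pointwise.
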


\begin{proof}
It follows from the second part of Lemma \ref{basiclemma} that $\{p - q: p \succsim q\}$
is convex, and therefore $C$, the cone this set generates, is also convex. Next, we verify
(\ref{corder}). The forward implication is immediate; for the reverse implication, suppose
$p - q \in C$ and write $p - q = \alpha(p' - q')$ with $\alpha \geq 0$ and $p' \succsim q'$. Setting
$\lambda := \frac{\alpha}{\alpha + 1}$, we then have $(1 - \lambda)(p - q) = \lambda(p' - q')$,
and rearranging this yields $p \oplus_\lambda q' = q \oplus_\lambda p'$. Now by the first part of Lemma
\ref{basiclemma}, $p' \succsim q'$ implies $p \oplus_\lambda p' \succsim p \oplus_\lambda q'
= q \oplus_\lambda p'$, so that $p \succsim q$, as desired.

It remains to show that $C$ is closed. Fix $\sigma \in {\rm KR}(X)\setminus C$ and write
$\sigma = \alpha(q - p)$ for some $\alpha > 0$ and $p,q \in \Delta_1(X)$ (Proposition \ref{krprop}).
Note that $q \succsim p$
must fail, since $\sigma \not\in C$. Let $K$ be as in Definition \ref{lipdef} for this $p$ and $q$, so that
$q \oplus_\lambda q' \succsim p \oplus_\lambda p'$ fails whenever $0 \leq \lambda <
\frac{K}{K + W_1(p', q')}$, for any $p', q' \in \Delta_1(X)$. We will show that the open
ball of radius $\alpha K$ about $\sigma$ is disjoint from $C$, and since $\sigma$ is an arbitrary
element in the complement of $C$, this will establish that $C$ is closed.

We claim that for every $\mu \in {\rm KR}(X)$ we have $\sigma \oplus_\theta \mu \not\in C$
whenever $0 \leq \theta < \frac{\alpha K}{\alpha K + \|\mu\|_{\rm KR}}$. If $\mu = 0$, this follows
from the fact that $C$ is a cone; otherwise, write
$\mu = \beta(q' - p')$ with $\beta > 0$ and $p', q' \in \Delta_1(X)$ (Proposition \ref{krprop} again).
Then for any $0 \leq \lambda < \lambda_0 := \frac{K}{K + W_1(p', q')}$,
the choice of $K$ ensures that $q \oplus_\lambda q' \succsim p \oplus_\lambda p'$
fails, and therefore by (\ref{corder})
\begin{equation*}
\left(q \underset{\lambda}{\oplus} q'\right) - \left(p \underset{\lambda}{\oplus} p'\right)
= (1 - \lambda)(q - p) + \lambda(q' - p') \not\in C,
\end{equation*}
or equivalently
\begin{equation*}
\frac{1 - \lambda}{\alpha}\sigma + \frac{\lambda}{\beta}\mu \not\in C.
\end{equation*}
Since $C$ is a cone, this will still be true if the left side is multiplied by any positive scalar; in particular,
\begin{equation*}
\frac{(1-\lambda)/\alpha}{(1 - \lambda)/\alpha + \lambda/\beta}\sigma
+ \frac{\lambda/\beta}{(1 - \lambda)/\alpha + \lambda/\beta}\mu \not\in C
\end{equation*}
for $0 \leq \lambda < \lambda_0$. That is, for any $\lambda$ in this range we have
$\sigma \oplus_\theta \mu \not\in C$ where
$\theta = \theta(\lambda) := \frac{\lambda/\beta}{(1-\lambda)/\alpha + \lambda/\beta}$.
Since $\theta(\lambda) = \frac{\alpha}{\beta(1/\lambda - 1) + \alpha}$ is a continuous increasing
function of $\lambda$, this means that $\sigma \oplus_\theta \mu \not\in C$ whenever
\begin{eqnarray*}
0\quad\leq\quad \theta\quad <\quad \theta(\lambda_0)
&=& \frac{\lambda_0/\beta}{(1-\lambda_0)/\alpha + \lambda_0/\beta}\cr
&=& \frac{1}{1 + (1 - \lambda_0)\beta/(\lambda_0 \alpha)}\cr
&=& \frac{1}{1 + (1/\lambda_0 - 1)(\beta/\alpha)}\cr
&=& \frac{1}{1 + (W_1(p',q')/K)(\beta/\alpha)}\cr
&=& \frac{\alpha K}{\alpha K + \|\mu\|_{\rm KR}}.
\end{eqnarray*}
This proves the claim.

To complete the proof, choose any $\mu' \in {\rm KR}(X)$ with $\|\mu'\|_{\rm KR} = 1$.
We will show that $\sigma + \gamma\mu' \not\in C$ for $0 \leq \gamma < \alpha K$,
yielding that the open ball of radius $\alpha K$ about $\sigma$ does not
intersect $C$. To do this, for each $N \in \mathbb{N}$ define
$\mu_N := \sigma + N\mu'$. Then according to the claim, we have
\begin{equation*}
(1 - \theta)\sigma + \theta\mu_N = \sigma + N\theta\mu' \not\in C
\end{equation*}
whenever $0 \leq \theta < \theta_N := \frac{\alpha K}{\alpha K + \|\mu_N\|_{\rm KR}}$. That is,
$\sigma + \gamma\mu' \not\in C$ provided
\begin{equation*}
0 \leq \gamma < N\theta_N = \frac{\alpha NK}{\alpha K + \|\mu_N\|_{\rm KR}}
=\frac{\alpha K}{\alpha K/N + \|\mu_N\|_{\rm KR}/N}.
\end{equation*}
As $N \to \infty$ we have $\alpha K/N \to 0$ and $\|\mu_N\|_{\rm KR}/N
= \|\sigma + N\mu'\|_{\rm KR}/N \to \|\mu'\|_{\rm KR} = 1$. So taking $N \to \infty$
yields $\sigma + \gamma\mu' \not\in C$ for $0 \leq \gamma < \alpha K$, as desired.
\end{proof}

\begin{theo}\label{mainthm}
Let $X$ be a separable metric space and let $\succsim$ be a preorder on $\Delta_1(X)$.
Then $\succsim$ is Lipschitz and affine if and only if there exists a family $\mathcal{U}$
of Lipschitz functions on $X$ such that
\begin{equation}\label{multiorder2}
p\succsim q\qquad\mbox{iff}\qquad
\int_X u\, {\rm d}p \geq \int_X u\, {\rm d}q\quad\mbox{for every }u \in \mathcal{U}.
\end{equation}
\end{theo}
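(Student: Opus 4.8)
The plan is to prove the two directions separately. The backward implication---that a preorder admitting a representation of the form~(\ref{multiorder2}) with Lipschitz $\mathcal{U}$ is automatically Lipschitz and affine---is exactly Proposition~\ref{liputilities}, so nothing further is needed there. All the work lies in the forward direction: assuming $\succsim$ is Lipschitz and affine, I must manufacture a family $\mathcal{U}$ of Lipschitz functions realizing~(\ref{multiorder2}).

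For the forward direction, the key tool is Lemma~\ref{mainlemma}, which says that the cone $C := \{\alpha(p-q) : \alpha \geq 0 \text{ and } p \succsim q\}$ is closed and convex in ${\rm KR}(X)$, and that $p \succsim q$ holds if and only if $p - q \in C$. The strategy is then a Hahn--Banach separation argument. Since $C$ is a closed convex subset of the normed space ${\rm KR}(X)$, it equals the intersection of all closed half-spaces containing it. Each such half-space is determined by a continuous linear functional on ${\rm KR}(X)$, and by the duality Theorem~\ref{duality} every such functional is given by integration against some $f \in {\rm Lip}_0(X)$. So I would proceed as follows. First, observe that because $C$ is a cone, any closed half-space $\{\mu : \int f\, {\rm d}\mu \geq c\}$ containing $C$ must in fact contain $0$ on its boundary, forcing $c \leq 0$; and a standard cone argument shows one may take $c = 0$, i.e.\ $C$ is the intersection of the homogeneous half-spaces $\{\mu : \int f\, {\rm d}\mu \geq 0\}$ over those $f \in {\rm Lip}_0(X)$ that are nonnegative on $C$. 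I would set $\mathcal{U}$ to be precisely this collection of Lipschitz functions $f$ (those satisfying $\int f\, {\rm d}\mu \geq 0$ for all $\mu \in C$).

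With $\mathcal{U}$ so defined, verifying~(\ref{multiorder2}) is a matter of chasing definitions. For $p, q \in \Delta_1(X)$, note $p - q \in {\rm KR}(X)$, and $\int u\, {\rm d}p \geq \int u\, {\rm d}q$ for all $u \in \mathcal{U}$ says exactly that $\int u\, {\rm d}(p - q) \geq 0$ for every $u \in \mathcal{U}$, i.e.\ $p - q$ lies in every homogeneous half-space defining $C$. Since $C$ is the intersection of these half-spaces, this is equivalent to $p - q \in C$, which by~(\ref{corder}) is equivalent to $p \succsim q$. This establishes the representation. I should also confirm that $\mathcal{U}$ is nonempty; this holds because the zero function lies in $\mathcal{U}$ (and more substantively the separation argument produces genuine separating functionals whenever $C \neq {\rm KR}(X)$, while if $C = {\rm KR}(X)$ the preorder is trivial and $\mathcal{U} = \{0\}$ represents it).

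The main obstacle is the separation step itself, specifically ensuring that $C$ can be described purely by \emph{homogeneous} half-spaces through the origin rather than arbitrary affine ones. This is where closedness of $C$ (supplied by Lemma~\ref{mainlemma}) and the cone property must be combined carefully: given $\sigma \notin C$, the Hahn--Banach theorem separates $\sigma$ from the closed convex set $C$ by some $f \in {\rm Lip}_0(X)$ with $\int f\, {\rm d}\sigma < c \leq \inf_{\mu \in C} \int f\, {\rm d}\mu$; I then need to argue that homogeneity of $C$ lets me normalize to $c = 0$ and that $f$ is nonnegative on $C$, so that $f \in \mathcal{U}$ witnesses $\sigma \notin \bigcap_{u \in \mathcal{U}}\{\mu : \int u\, {\rm d}\mu \geq 0\}$. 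This confirms that the intersection of the homogeneous half-spaces is exactly $C$ and not something larger, which is the crux of the whole argument.
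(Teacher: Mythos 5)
Your proposal is correct and follows essentially the same route as the paper: both directions rest on Proposition \ref{liputilities}, Lemma \ref{mainlemma}, separation of a point from the closed convex cone $C$ by a homogeneous half-space, and the duality ${\rm KR}(X)^* \cong {\rm Lip}_0(X)$ of Theorem \ref{duality}. The only cosmetic difference is that you take $\mathcal{U}$ to be the entire dual cone of $C$ (the maximal multi-utility, which the paper calls $\mathcal{U}_2$ in Section 4.4), whereas the paper selects one separating functional $u_{p,q}$ for each pair with $q \succsim p$ failing; both choices yield the representation.
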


\begin{proof}
The reverse implication in this characterization is precisely Proposition \ref{liputilities}. To
establish the forward implication,  assume $\succsim$ is Lipschitz and affine. It will suffice
to find, for every $p,q \in \Delta_1(X)$ such that $q \succsim p$ fails, a Lipschitz function
$u = u_{p,q}$ on $X$ which satisfies both
\begin{equation}\label{need1}
\int_X u\, {\rm d}p > \int_X u\, {\rm d}q
\end{equation}
and
\begin{equation}\label{need2}
\int_X u\, {\rm d}p' \geq \int_X u\, {\rm d}q'
\end{equation}
for every $p', q' \in \Delta_1(X)$ with $p' \succsim q'$. We can then take $\mathcal{U} = \{u_{p,q}:
q \succsim p$ fails$\}$ in (\ref{multiorder2}).

As in Lemma \ref{mainlemma}, define $C := \{\alpha(p - q): \alpha \geq 0$ and $p\succsim q\} \subseteq {\rm KR}(X)$.
Fix $p,q \in \Delta_1(X)$ such that $q \succsim p$ fails. Then $q - p \not\in C$ by the last part of
that lemma, and since $C$ is a closed convex cone, the separating hyperplane theorem yields a bounded
linear functional $U$ on ${\rm KR}(X)$ such that $U(\mu) \geq 0 > U(q - p)$ for every $\mu \in C$.
By Theorem \ref{duality}, there is a Lipschitz function $u$ on $X$ with
$U(\mu) = \int u\, {\rm d}\mu$ for any $\mu \in {\rm KR}(X)$. Putting $\mu = q - p$, this yields
(\ref{need1}), and putting $\mu = p' - q' \in C$ for any $p', q' \in \Delta_1(X)$ with $p' \succsim q'$, it yields
(\ref{need2}). This completes the proof.
\end{proof}

If $\succsim$ is total, then it is not too hard to see that the family $\mathcal{U}$ in
Theorem \ref{mainthm} can be taken to consist of a single utility function $u$. We thus obtain the
following Lipschitz version of the classical von Neumann-Morgenstern expected utility theorem.

\begin{coro}\label{maincor}
Let $X$ be a separable metric space and let $\succsim$ be a total preorder on $\Delta_1(X)$. Then
$\succsim$ is Lipschitz and affine if and only if there is a Lipschitz function $u$ on $X$ such that
\begin{equation}\label{bernoulli2}
p\succsim q\qquad\mbox{iff}\qquad
\int_X u\, {\rm d}p \geq \int_X u\, {\rm d}q.
\end{equation}
\end{coro}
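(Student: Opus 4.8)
The plan is to derive Corollary \ref{maincor} from Theorem \ref{mainthm} by showing that, when $\succsim$ is \emph{total}, the family $\mathcal{U}$ of Lipschitz functions can be collapsed to a single Lipschitz utility $u$. The backward direction is free: if \eqref{bernoulli2} holds for one Lipschitz $u$, then taking $\mathcal{U} = \{u\}$ in Theorem \ref{mainthm} shows $\succsim$ is Lipschitz and affine. So the content is entirely in the forward direction. Here totalness is the essential extra hypothesis, since for a total preorder the representation \eqref{multiorder2} with a family $\mathcal{U}$ should be expressible by a single function.

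The cleanest route is to work again with the closed convex cone $C := \{\alpha(p-q) : \alpha \geq 0 \text{ and } p \succsim q\} \subseteq {\rm KR}(X)$ supplied by Lemma \ref{mainlemma}, and exploit that totalness of $\succsim$ forces $C$ to be a \emph{half-space} rather than a general closed convex cone. Indeed, by \eqref{corder} we have $p \succsim q$ iff $p - q \in C$, and totalness means that for any $p, q \in \Delta_1(X)$ at least one of $p \succsim q$ or $q \succsim p$ holds, i.e.\ at least one of $p - q \in C$ or $q - p \in C$ holds. Since every element of ${\rm KR}(X)$ has the form $\alpha(p-q)$ by Proposition \ref{krprop}, this says $C \cup (-C) = {\rm KR}(X)$: the cone $C$ together with its negative covers the whole space. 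First I would record that $C$ is a closed convex cone with $C \cup (-C) = {\rm KR}(X)$, and that $p \succsim q$ iff $p - q \in C$.

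Next I would invoke a standard fact: a closed convex cone $C$ in a real topological vector space satisfying $C \cup (-C) = V$ is a closed half-space, i.e.\ $C = \{\mu : U(\mu) \geq 0\}$ for some continuous linear functional $U$ (with $C \cap (-C) = \ker U$ playing the role of the boundary hyperplane). The argument is that $N := C \cap (-C)$ is a closed linear subspace, the quotient $V/N$ is an ordered vector space in which the image of $C$ is a salient closed cone still covering the quotient, hence a total order compatible with the linear structure; in a quotient this pins the image of $C$ down to a single ray's worth of half-space, yielding the functional $U$ with $U \geq 0$ exactly on $C$ and $U = 0$ exactly on $N$. Applying Theorem \ref{duality}, this $U$ is integration against a single Lipschitz function $u$ on $X$, so $p \succsim q$ iff $p - q \in C$ iff $U(p-q) \geq 0$ iff $\int u\, {\rm d}p \geq \int u\, {\rm d}q$, which is exactly \eqref{bernoulli2}.

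The main obstacle is the half-space lemma for cones with $C \cup (-C) = V$: one must produce a \emph{single} functional $U$ cutting out $C$, not merely a family separating points as in the general multi-utility case. The subtlety is that $C$ need not have nonempty interior (the ${\rm KR}$ norm may be far from making $C$ fat), so I cannot simply separate an interior point from a boundary point. The safe way is to pass to the quotient $V/N$ by $N = C \cap (-C)$, verify that the induced cone is both closed and salient and still satisfies the covering condition, and then show that a salient closed cone covering its ambient space must be one-dimensional in its ``order direction'' — or more directly, that the induced order on $V/N$ is a total vector-space order, forcing $V/N$ to be at most one-dimensional (or, if infinite-dimensional, that the order is still representable by a single functional because totalness plus the Archimedean/closedness property eliminates any two incomparable directions). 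I would then lift the resulting functional on $V/N$ back to $V$ to obtain $U$. Care is needed to confirm $U$ is \emph{bounded} (continuous) on ${\rm KR}(X)$ so that Theorem \ref{duality} applies; this follows from $C$ being closed, which makes $N$ closed and the half-space structure continuous.
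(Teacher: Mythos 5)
Your proposal is correct in outline, but it takes a genuinely different route from the paper's. The paper also reduces everything to the forward direction and also starts from Theorem \ref{mainthm}, but it does not return to the cone $C$: it simply fixes an arbitrary nonzero $u$ in the multi-utility $\mathcal{U}$, sets $U(p):=\int u\,{\rm d}p$, notes that $p\succsim q$ implies $U(p)\geq U(q)$, and rules out $p\succ q$ with $U(p)=U(q)$ by choosing $r$ with $U(r)<U(p)$, invoking the Lipschitz property of $\succsim$ (with $p'=r$, $q'=q$) to get $p\oplus_\lambda r\succ q$ for small $\lambda>0$, and contradicting $U(p\oplus_\lambda r)<U(q)$. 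That argument is short and uses no further functional analysis. Your route --- totality gives $C\cup(-C)={\rm KR}(X)$, and a proper closed convex cone with this property is a closed half-space $\{U\geq 0\}$ --- is also valid, and is in fact anticipated by the paper's remark after the proof that in the total case ``the cone $C$ becomes a half-space and no separation argument is needed.'' Two repairs are worth making. First, dispose of the degenerate case $C={\rm KR}(X)$ (trivial preorder; take $u=0$) before asserting $C$ is a half-space. Second, your quotient argument is more delicate than necessary: the image cone in ${\rm KR}(X)/N$ \emph{is} closed (since $C+N=C$, $C$ is the full preimage of its image and quotient maps reflect closedness), but it is cleaner to separate a single point $v\notin C$ from the closed convex set $C$ by Hahn--Banach, obtaining a bounded functional $U$ with $U\geq 0$ on $C$ and $U(v)<0$; then $C\cup(-C)={\rm KR}(X)$ gives $\{U>0\}\subseteq C\subseteq\{U\geq 0\}$, and closedness of $C$ forces $C=\{U\geq 0\}$, after which Theorem \ref{duality} produces the single Lipschitz $u$. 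What your approach buys is the structural fact that totality is exactly the half-space property of $C$; what the paper's approach buys is brevity and the stronger observation that \emph{any} nonzero member of \emph{any} Lipschitz Bernoulli multi-utility for a total $\succsim$ is already a Bernoulli utility for it.
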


\begin{proof}
The reverse implication is a special case of Proposition \ref{liputilities}. For the forward implication,
assume $\succsim$ is Lipschitz and affine. We know from Theorem \ref{mainthm} that there is a
Bernoulli multi-utility $\mathcal{U} \subseteq {\rm Lip}_0(X)$ for $\succsim$. We
must show that there is a single Bernoulli utility function $u \in {\rm Lip}_0(X)$.

If $\mathcal{U} = \emptyset$ or $\{0\}$ then $p \sim q$ for every $p, q \in \Delta_1(X)$ and we
can take $u = 0_X$ in (\ref{bernoulli2}). Otherwise, fix any nonzero $u \in \mathcal{U}$ and
define $U: \Delta_1(X) \to \mathbb{R}$
by $U(p) := \int u\, {\rm d}p$. We know that $p \succsim q$ implies $U(p) \geq U(q)$, and we
must prove the reverse implication. That is --- since $\succsim$ is total --- we must show that
$U(p) = U(q)$ is impossible if $p \succ q$. It will then follow
that condition (\ref{bernoulli2}) holds with this $u$.

Suppose for the sake of contradiction that $p \succ q$ but $U(p) = U(q)$. Since $u$ is not constant,
there exists $r \in \Delta_1(X)$ with $U(r) \neq U(p) = U(q)$; without essential loss of generality
suppose $U(r) < U(p) = U(q)$. We must then have $p \succ q \succ r$.

Since $p \succ q$ and $\succsim$ is Lipschitz and total, we have
$p \oplus_\lambda r \succ q \oplus_\lambda q = q$ for sufficiently small $\lambda > 0$. However,
\begin{equation*}
U\left(p \underset{\lambda}{\oplus} r\right) = (1 - \lambda)U(p) + \lambda U(r) < U(p) = U(q),
\end{equation*}
implying that $p \oplus_\lambda r \prec q$, a contradiction. This completes the proof.
\end{proof}

Corollary \ref{maincor} can also be proven directly, without invoking Theorem \ref{mainthm}.
The case where $\succsim$ is total is special because any nonconstant function $u$ which
satisfies (\ref{bernoulli2}) is, in fact,
completely determined by its values on any two inequivalent (i.e., indifferent)
points of $X$. So the Hahn-Banach aspect
of the proof of Theorem \ref{mainthm} is not really relevant --- in this case the cone $C$
becomes a half-space and no separation argument is needed. Instead, we could simply fix the values
of $u$ on two inequivalent points in $X$, determine what its value must be on any other point, and
then verify that the resulting function must be Lipschitz. However, it is easier to deduce Corollary
\ref{maincor} from Theorem \ref{mainthm}, so we omit this longer but more elementary proof.

\subsection{Uniqueness of Lipschitz multi-utilities}
Let $X$ be a separable metric space and let $\mathcal{U}$ be a family of Lipschitz functions on $X$.
According to Proposition \ref{liputilities}, $\mathcal{U}$ induces a Lipschitz affine preorder
$\succsim$ on $\Delta_1(X)$ via (\ref{multiorder2}).

We can modify $\mathcal{U}$ in various ways without affecting the preorder it induces. For
example, if $u$ and $v$ are related by a positive affine transformation then
\begin{equation*}
\int_X u\, {\rm d}p \geq \int_X u\, {\rm d}q\qquad\mbox{ if and only if }\qquad
\int_X v\, {\rm d}p \geq \int_X v\, {\rm d}q,
\end{equation*}
for any $p, q \in \Delta_1(X)$. Thus every function in $\mathcal{U}$ can be replaced by a positive
affine transformation of itself without affecting the induced preorder. Also, constant functions
in $\mathcal{U}$ have no effect on the induced preorder, so they can be included or removed
without consequence.

Fixing a base point $e \in X$ (as in Section 2.2) and denoting the Lipschitz number of $u$ by $L(u)$, this observation
yields that the set $\mathcal{U}_1 = \left\{\frac{u - u(e)}{L(u)}: u \in \mathcal{U}\mbox{ is nonconstant}\right\}$
induces the same preorder $\succsim$ as $\mathcal{U}$. Thus, we can always replace $\mathcal{U}$
by  a subset of the unit sphere of ${\rm Lip}_0(X)$ without affecting $\succsim$. At the other extreme,
once $\succsim$ has been identified,
we can take $\mathcal{U}_2$ to be the set of {\it all} $u \in {\rm Lip}_0(X)$ which satisfy
$\int u\, {\rm d}p \geq \int u\, {\rm d}q$ for every $p, q \in \Delta_1(X)$ such that $p \succsim q$.
This family will also induce the same preorder, and it is clearly the unique maximal subset of ${\rm Lip}_0(X)$
which does so. This $\mathcal{U}_2$ is just the cone which is dual to the cone $C$ used in the proof of
Theorem \ref{mainthm}, relative to the duality ${\rm KR}(X)^* \cong {\rm Lip}_0(X)$ of Theorem
\ref{duality}.

Any family $\mathcal{V} \subseteq {\rm Lip}_0(X)$ satisfying $\mathcal{U}_1 \subseteq
\mathcal{V} \subseteq \mathcal{U}_2$ will induce the same preorder as $\mathcal{U}$. Together with
the easy observation that $\mathcal{U}_2$ is weak* closed and convex, this shows that in particular
the weak* closed convex hull of $\mathcal{U}_1$) induces the same preorder as $\mathcal{U}$. Thus,
not only can $\mathcal{U}$ always be replaced by a subset of the unit sphere of ${\rm Lip}_0(X)$, it can
also be replaced by a weak* closed convex subset of the closed unit ball of ${\rm Lip}_0(X)$. We have
shown:

\begin{coro}\label{convexmulti}
Let $X$ be a pointed separable metric space and let $\succsim$ be a preorder on $\Delta_1(X)$.
Then $\succsim$ is Lipschitz and affine if and only if some weak* closed convex subset
of the closed unit ball of ${\rm Lip}_0(X)$ a Bernoulli multi-utility for $\succsim$.
\end{coro}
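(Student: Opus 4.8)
The plan is to deduce Corollary \ref{convexmulti} from Theorem \ref{mainthm} together with the discussion of $\mathcal{U}_1$ and $\mathcal{U}_2$ that precedes the statement. The reverse implication is essentially free: if some $\mathcal{V} \subseteq {\rm Lip}_0(X)$ (weak* closed, convex, inside the closed unit ball or not) is a Bernoulli multi-utility for $\succsim$, then $\succsim$ is induced by a family of Lipschitz functions, so Proposition \ref{liputilities} immediately gives that $\succsim$ is Lipschitz and affine. The real content is the forward implication, and for that I would start from Theorem \ref{mainthm}, which furnishes \emph{some} family $\mathcal{U}$ of Lipschitz functions inducing $\succsim$ via (\ref{multiorder2}).

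Next I would pass to the normalized family $\mathcal{U}_1 = \left\{\frac{u - u(e)}{L(u)} : u \in \mathcal{U}\text{ is nonconstant}\right\}$, which lies in the unit sphere of ${\rm Lip}_0(X)$ and, by the positive-affine-invariance and constant-deletion observations recorded above, induces the same preorder $\succsim$. I would then invoke the maximal dual cone $\mathcal{U}_2 = \{u \in {\rm Lip}_0(X) : \int u\, {\rm d}p \geq \int u\, {\rm d}q \text{ whenever } p \succsim q\}$, noting $\mathcal{U}_1 \subseteq \mathcal{U}_2$ and that any $\mathcal{V}$ sandwiched between them also induces $\succsim$. The candidate I would take is $\mathcal{V} := \overline{\mathrm{co}}^{\,w^*}(\mathcal{U}_1)$, the weak* closed convex hull of $\mathcal{U}_1$; since $\mathcal{U}_1$ sits in the closed unit ball, so does $\mathcal{V}$, and $\mathcal{V}$ is weak* closed and convex by construction.

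The step requiring the most care is verifying $\mathcal{V} \subseteq \mathcal{U}_2$, i.e.\ that passing to the weak* closed convex hull does not introduce functions violating the defining inequalities of $\mathcal{U}_2$. I would argue this by showing $\mathcal{U}_2$ is itself weak* closed and convex. Convexity is clear from linearity of $f \mapsto \int f\, {\rm d}(p-q)$. For weak* closedness, fix $p \succsim q$ and observe that $\{u : \int u\, {\rm d}(p - q) \geq 0\}$ is a weak* closed half-space, because $p - q \in {\rm KR}(X)$ and evaluation $u \mapsto \langle p - q, u\rangle = \int u\, {\rm d}(p-q)$ is exactly a weak* continuous functional under the duality ${\rm KR}(X)^* \cong {\rm Lip}_0(X)$ of Theorem \ref{duality}. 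Thus $\mathcal{U}_2 = \bigcap_{p \succsim q} \{u : \langle p - q, u\rangle \geq 0\}$ is an intersection of weak* closed convex sets, hence weak* closed and convex. Since $\mathcal{U}_1 \subseteq \mathcal{U}_2$ and $\mathcal{U}_2$ is weak* closed and convex, its weak* closed convex hull $\mathcal{V}$ satisfies $\mathcal{U}_1 \subseteq \mathcal{V} \subseteq \mathcal{U}_2$; therefore $\mathcal{V}$ induces $\succsim$ and lies in the closed unit ball of ${\rm Lip}_0(X)$, completing the forward implication.

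I expect the main obstacle to be purely a matter of bookkeeping rather than genuine difficulty: confirming that evaluation against a fixed $p - q$ really is a \emph{weak*} (as opposed to merely norm) continuous functional, which is precisely what the predual identification in Theorem \ref{duality} guarantees, since each $p - q \in {\rm KR}(X)$ acts as an element of the predual. One minor point to watch is the edge case where $\mathcal{U}$ consists only of constant functions (so $\mathcal{U}_1 = \emptyset$): then $\succsim$ identifies all measures, and $\mathcal{V} = \emptyset$ (or $\{0\}$) serves as the required weak* closed convex subset of the unit ball, which I would note explicitly to ensure the statement holds without exception.
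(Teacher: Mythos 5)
Your proposal is correct and follows essentially the same route as the paper: normalize $\mathcal{U}$ to the unit-sphere family $\mathcal{U}_1$, sandwich its weak* closed convex hull between $\mathcal{U}_1$ and the maximal dual cone $\mathcal{U}_2$, and justify the containment by observing that $\mathcal{U}_2$ is weak* closed and convex because each $p-q \in {\rm KR}(X)$ acts as a weak* continuous functional via Theorem \ref{duality}. Your explicit treatment of the degenerate case $\mathcal{U}_1 = \emptyset$ is a small, harmless addition the paper leaves implicit.
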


We can also formulate a version of the uniqueness theorem of \cite{dmo} in the present setting.
However, here we have to use a slightly weaker topology than the standard weak* topology on
${\rm Lip}_0(X)$, namely the weakest topology which makes integration against every measure
in ${\rm KR}(X)$ continuous. We will call this the {\it {\rm KR}-weak* topology}. It agrees with
the standard weak* topology on bounded subsets of ${\rm Lip}_0(X)$ (where both are just the
topology of pointwise convergence), but unless ${\rm KR}(X)$ is complete it will be weaker than
the standard weak* topology on unbounded sets.

\begin{theo}
Let $X$ be a pointed separable metric space and let $\mathcal{U}$ and $\mathcal{V}$ be
subsets of ${\rm Lip}_0(X)$. Then $\mathcal{U}$ and $\mathcal{V}$ are Bernoulli multi-utilities
for the same preorder on $\Delta_1(X)$ if and only if they generate the same {\rm KR}-weak*
closed convex cone.
\end{theo}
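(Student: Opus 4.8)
The plan is to establish a clean dictionary between Bernoulli multi-utilities and cones in $\mathrm{Lip}_0(X)$, and then show that the relevant cone is the $\mathrm{KR}$-weak* closed convex cone generated by the family. Given a family $\mathcal{U} \subseteq \mathrm{Lip}_0(X)$, the preorder it induces is determined precisely by which pairs $(p,q)$ satisfy $\int u\,{\rm d}p \geq \int u\,{\rm d}q$ for every $u \in \mathcal{U}$; in the language of the pairing $\langle \mu, u\rangle = \int u\,{\rm d}\mu$, this says $p \succsim q$ iff $\langle p-q, u\rangle \geq 0$ for all $u \in \mathcal{U}$. Since (by Proposition \ref{krprop}) every element of $\mathrm{KR}(X)$ has the form $\alpha(p-q)$, the preorder is equivalent to the data of the set of $\mu \in \mathrm{KR}(X)$ which are nonnegatively paired against all of $\mathcal{U}$ --- that is, the positive cone $C$ dual to $\mathcal{U}$ in $\mathrm{KR}(X)$. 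So the first step is to observe that $\mathcal{U}$ and $\mathcal{V}$ induce the same preorder if and only if they determine the same dual cone $C \subseteq \mathrm{KR}(X)$.

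Next I would invoke bipolar-type duality. The dual cone $C \subseteq \mathrm{KR}(X)$ of a set $\mathcal{U} \subseteq \mathrm{Lip}_0(X) \cong \mathrm{KR}(X)^*$ depends only on the $\mathrm{KR}$-weak* closed convex cone generated by $\mathcal{U}$: enlarging $\mathcal{U}$ to its closed convex conical hull leaves $C$ unchanged, because the conditions $\langle \mu, u\rangle \geq 0$ are preserved under positive combinations and under $\mathrm{KR}$-weak* limits (the latter being exactly the topology defined so that $\mu \mapsto \langle \mu, u\rangle$ is continuous for each $\mu \in \mathrm{KR}(X)$). Conversely, the bipolar theorem for the dual pair $(\mathrm{KR}(X), \mathrm{Lip}_0(X))$ --- here I would apply the standard bipolar theorem to the weak topology $\sigma(\mathrm{Lip}_0(X), \mathrm{KR}(X))$, which is precisely the $\mathrm{KR}$-weak* topology --- shows that the $\mathrm{KR}$-weak* closed convex cone generated by $\mathcal{U}$ is recovered as the bipolar, i.e. as the set of $u$ paired nonnegatively against every $\mu \in C$. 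This gives a bijection between $\mathrm{KR}$-weak* closed convex cones in $\mathrm{Lip}_0(X)$ and their dual cones $C$ in $\mathrm{KR}(X)$.

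Combining these two steps finishes the argument: $\mathcal{U}$ and $\mathcal{V}$ induce the same preorder iff they yield the same dual cone $C$, and (by the bipolar correspondence) they yield the same dual cone iff they generate the same $\mathrm{KR}$-weak* closed convex cone. I would phrase this by letting $\overline{\mathcal{U}}$ and $\overline{\mathcal{V}}$ denote the $\mathrm{KR}$-weak* closed convex cones generated by $\mathcal{U}$ and $\mathcal{V}$; one direction (same generated cone $\Rightarrow$ same preorder) is the easy observation that passing to the closed convex conical hull does not change the dual cone, and the other direction (same preorder $\Rightarrow$ same generated cone) is where the bipolar theorem does the real work.

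The main obstacle I anticipate is the careful handling of the topology. The $\mathrm{KR}$-weak* topology is the weak topology $\sigma(\mathrm{Lip}_0(X), \mathrm{KR}(X))$ rather than the full weak* topology $\sigma(\mathrm{Lip}_0(X), \widetilde{\mathrm{KR}}(X))$ coming from the completion of $\mathrm{KR}(X)$; the two can genuinely differ on unbounded sets (as the paragraph preceding the statement warns). I must therefore make sure the bipolar theorem is applied to the correct dual pair, with $\mathrm{KR}(X)$ --- not its completion --- as the predual, so that the separating functionals produced by Hahn-Banach are integration against genuine measures $\mu \in \mathrm{KR}(X)$, and hence correspond to actual comparisons $p \succsim q$. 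Getting this pairing exactly right is what justifies using the $\mathrm{KR}$-weak* topology in place of the standard weak* topology, and is the crux of the theorem.
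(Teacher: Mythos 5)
Your proposal is correct and follows essentially the same route as the paper: you reduce the statement to the observation that the induced preorder is equivalent to the dual cone in ${\rm KR}(X)$, and then recover the ${\rm KR}$-weak* closed convex cone by duality over the pair $({\rm KR}(X), {\rm Lip}_0(X))$. The paper phrases the key step as a direct application of the separating hyperplane theorem (picking $u_0 \in \mathcal{C}(\mathcal{U})\setminus\mathcal{C}(\mathcal{V})$ and producing $\mu = \alpha(p-q) \in {\rm KR}(X)$ with $p \succsim_{\mathcal{V}} q$ but not $p \succsim_{\mathcal{U}} q$) rather than invoking the bipolar theorem, but this is the same argument, and you correctly identify the crux --- that the ${\rm KR}$-weak* topology is chosen precisely so the separating functionals are integration against genuine elements of ${\rm KR}(X)$.
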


\begin{proof}
Denote the KR-weak* closed convex cone generated by any $\mathcal{W} \subseteq {\rm Lip}_0(X)$
by $\mathcal{C}(\mathcal{W})$. As we noted above, any $\mathcal{W} \subseteq {\rm Lip}_0(X)$
induces the same preorder as $\mathcal{C}(\mathcal{W})$. So if $\mathcal{C}(\mathcal{U}) =
\mathcal{C}(\mathcal{V})$, it must be the case that $\mathcal{U}$ and $\mathcal{V}$ are
Bernoulli multi-utilities for the same preorder. Conversely, suppose $\mathcal{C}(\mathcal{U})
\neq \mathcal{C}(\mathcal{V})$ and let $\succsim_{\mathcal{U}}$ and $\succsim_{\mathcal{V}}$
be the induced preorders. Without loss of generality, suppose $\mathcal{C}(\mathcal{U})
\setminus \mathcal{C}(\mathcal{V})$ is nonempty, and pick any $u_0$ in this set. Then by the
separating hyperplane theorem, there exists a $\mu \in {\rm KR}(X)$ such that the map
$\Phi: u \mapsto \int u\, {\rm d}\mu$ on ${\rm Lip}_0(X)$ satisfies $\Phi(u) \geq 0 > \Phi(u_0)$ for
every $u \in \mathcal{C}(\mathcal{V})$. Writing $\mu = \alpha(p - q)$ as in Proposition
\ref{krprop}, this yields that $p \succsim_{\mathcal{V}} q$ but not $p \succsim_{\mathcal{U}} q$.
Thus $\mathcal{C}(\mathcal{U}) \neq \mathcal{C}(\mathcal{V})$ implies that
$\mathcal{U}$ and $\mathcal{V}$ induce different preorders.
\end{proof}

\subsection{Proper representation of Lipschitz affine preorders}
Let $X$ be a separable metric space and let $\succsim$ be a preorder on a nonempty convex
subset $S$ of $\Delta(X)$. We say that a collection $\mathcal{U}$ of Borel measurable functions
on $X$ is {\it strictly $\succsim$-increasing} if
\begin{equation*}
p \succ q\qquad\mbox{implies}\qquad
\int_X u\, {\rm d}p > \int_X u\, {\rm d}q\quad\mbox{for each }u \in \mathcal{U}.
\end{equation*}
In turn, we say that $\mathcal{U}$ is a {\it proper Bernoulli multi-utility} for $\succsim$ if it
is strictly $\succsim$-increasing and (\ref{multiorder2}) holds. Such a representation of
$\succsim$ is of interest because this way we can generate $\succsim$-maximal elements in
any given set simply by maximizing the expectations of the elements of $\mathcal{U}$. In other
words, a proper expected multi-utility representation for $\succsim$ allows us to maximize
$\succsim$ by the so-called scalarization method. (See Section 5 below.)

Proper expected multi-utility representations were studied in detail by Evren \cite{evren14}.
One of the findings of \cite{evren14} is that a continuous affine preorder on $\Delta(X)$ may
fail to admit a proper Bernoulli multi-utility even when $X$ is compact. This is primarily due
to the fact that there is no way of guaranteeing the compactness of the Bernoulli
multi-utilities one finds in the expected multi-utility theorem of \cite{dmo} relative to the
sup norm. In contrast, Corollary \ref{convexmulti} shows that a Lipschitz affine preorder on
$\Delta_1(X)$ always admits a weak* compact Lipschitz Bernoulli multi-utility (for any
separable $X$). We next use this fact to prove that every Lipschitz affine preorder admits a
proper expected multi-utility representation.

\begin{theo}\label{proper}
Let $X$ be a separable pointed metric space and let $\succsim$ be a preorder on $\Delta_1(X)$. Then
$\succsim$ is Lipschitz and affine if and only if it admits a convex proper Lipschitz Bernoulli multi-utility
$\mathcal{V} \subseteq {\rm Lip}_0(X)$.
\end{theo}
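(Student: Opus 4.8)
The reverse implication is immediate: a proper Bernoulli multi-utility is in particular a Bernoulli multi-utility consisting of Lipschitz functions, so Proposition \ref{liputilities} shows that any $\succsim$ representable by such a $\mathcal{V}$ is Lipschitz and affine.

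For the forward implication, assume $\succsim$ is Lipschitz and affine and set $C := \{\alpha(p-q) : \alpha \ge 0 \hbox{ and } p \succsim q\} \subseteq {\rm KR}(X)$ as in Lemma \ref{mainlemma}, so that $p \succsim q$ iff $p - q \in C$. By Corollary \ref{convexmulti} there is a weak* closed (hence, by Banach--Alaoglu, weak* compact) convex Bernoulli multi-utility $\mathcal{U}$ contained in the closed unit ball of ${\rm Lip}_0(X)$. In cone language, $\mathcal{U} \subseteq C^* := \{u \in {\rm Lip}_0(X) : \int_X u\, {\rm d}\nu \ge 0 \hbox{ for all } \nu \in C\}$, and $\mathcal{U}$ represents $\succsim$ in the sense that $\mu \in C$ iff $\int u\, {\rm d}\mu \ge 0$ for every $u \in \mathcal{U}$. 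Note that $p \succ q$ corresponds to $p - q \in C \setminus (-C)$, so a function $u \in C^*$ is strictly $\succsim$-increasing precisely when $\int u\, {\rm d}\mu > 0$ for every $\mu \in C \setminus (-C)$.

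The key step --- and the main obstacle --- is to produce a single strictly increasing $u_0 \in C^*$; in finite dimensions this is just a relative-interior point of the dual cone, but in our infinite-dimensional setting the relative interior may be empty, so existence must be extracted from separability and compactness. The plan is to average over a weak*-dense sequence. Since $X$ is separable, ${\rm KR}(X)$ is separable (Proposition \ref{krprop}), so the weak* topology is metrizable on the bounded set $\mathcal{U}$; being weak* compact and metrizable, $\mathcal{U}$ has a countable weak*-dense subset $\{u_n : n \in \mathbb{N}\}$. I would set $u_0 := \sum_n 2^{-n} u_n$. This series converges pointwise (as $|u_n(x)| \le d(x,e)$ and $L(u_n) \le 1$) to a function of Lipschitz number at most $1$, and the renormalized partial sums are convex combinations of elements of $\mathcal{U}$ converging to $u_0$ pointwise, i.e.\ weak*; as $\mathcal{U}$ is weak* closed, $u_0 \in \mathcal{U} \subseteq C^*$. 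Now for any $\mu \in C$ each term of $\int u_0\, {\rm d}\mu = \sum_n 2^{-n}\int u_n\, {\rm d}\mu$ is nonnegative, so the sum vanishes only if $\int u_n\, {\rm d}\mu = 0$ for every $n$; by weak* density this forces $\int u\, {\rm d}\mu = 0$ for all $u \in \mathcal{U}$, whence $\mu \in C \cap (-C)$ by the representation. Contrapositively, $\int u_0\, {\rm d}\mu > 0$ for every $\mu \in C \setminus (-C)$, so $u_0$ is strictly increasing.

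With $u_0$ in hand I would take $\mathcal{V} := \{u \in C^* : \int u\, {\rm d}\mu > 0 \hbox{ for every } \mu \in C \setminus (-C)\}$, the set of all strictly increasing members of $C^*$. This is nonempty ($u_0 \in \mathcal{V}$) and visibly convex, and each of its elements is strictly $\succsim$-increasing by construction, so it remains only to check that $\mathcal{V}$ represents $\succsim$ via (\ref{multiorder2}). The forward direction holds because $\mathcal{V} \subseteq C^*$. For the converse, if $p - q =: \mu \notin C$, then since $C$ is closed and convex the separating hyperplane theorem together with Theorem \ref{duality} yields $v \in C^*$ with $\int v\, {\rm d}\mu < 0$ (exactly as in the proof of Theorem \ref{mainthm}); the perturbation $v + \epsilon u_0$ then lies in $\mathcal{V}$ for every $\epsilon > 0$ (it is in $C^*$ and strictly increasing because $u_0$ is), while $\int (v + \epsilon u_0)\, {\rm d}\mu < 0$ for small $\epsilon$, exhibiting a member of $\mathcal{V}$ under which $q$ is strictly preferred to $p$. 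Hence $\mathcal{V}$ is a convex proper Lipschitz Bernoulli multi-utility, completing the proof.
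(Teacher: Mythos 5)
Your proposal is correct and follows essentially the same route as the paper: both hinge on taking the weak* compact convex multi-utility $\mathcal{U}$ from Corollary \ref{convexmulti}, forming $\sum_n 2^{-n}u_n$ over a weak*-dense sequence to get a single strictly $\succsim$-increasing function, and then representing $\succsim$ by perturbing elements of the dual cone with small multiples of it. The only (immaterial) difference is packaging: the paper takes $\mathcal{V}=\{u+\frac{1}{n}v\colon u\in\mathcal{U},\ n\in\mathbb{N}\}$ and passes to its convex hull, while you take the full convex set of strictly increasing elements of $C^*$.
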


\begin{proof}
The reverse direction follows from Proposition \ref{liputilities}. To prove the forward direction,
note first that the convex hull of any proper Bernoulli multi-utility for $\succsim$ is again a proper
Bernoulli multi-utility for $\succsim$. So it will suffice to find a proper Lipschitz Bernoulli multi-utility
$\mathcal{V} \subseteq {\rm Lip}_0(X)$, without the convexity requirement.

By Corollary \ref{convexmulti}, there is a weak* compact and convex
Bernoulli multi-utility $\mathcal{U}$ contained in the closed unit ball of ${\rm Lip}_0(X)$.
Since ${\rm Lip}_0(X)$ has a separable predual, the restriction of the weak* topology to its unit ball,
and therefore also to $\mathcal{U}$, is separable. So let $(u_n)$ be a weak* dense sequence
in $\mathcal{U}$ and define $v:= \sum_{n \geq 1} 2^{-n}u_n$.

Take any $p, q \in \Delta_1(X)$ with $p \succ q$. As $\mathcal{U}$ is a Bernoulli multi-utility for
$\succsim$, we must have $\int u_n\, {\rm d}(p - q) \geq 0$ for all $n$ and
$\int u_{n_0}\, {\rm d}(p - q) > 0$ for some $n_0$. Thus $\int v\, {\rm d}(p - q) > 0$.

To complete the proof, set $\mathcal{V} := \{u + \frac{1}{n} v: u \in \mathcal{U}$ and $n \in \mathbb{N}\}$.
It is clear from the preceding that $\mathcal{V}$ is strictly $\succsim$-increasing, and it is also clear
that $p \succsim q$ implies $\int u\, {\rm d}(p - q) \geq 0$ for every $u \in \mathcal{V}$. To complete
the proof that $\mathcal{V}$ is a Bernoulli multi-utility for $\succsim$, suppose $q \succsim p$ fails.
Then we must have $\int u\, {\rm d}(p - q) > 0$ for some $u \in \mathcal{U}$, and then for sufficiently
large $n$ we will also have $\int (u + \frac{1}{n}v)\, {\rm d}(p - q) > 0$.
\end{proof}

\subsection{An impossibility theorem}
We have seen above that one may represent a Lipschitz affine preorder $\succsim$ on $\Delta_1(X)$
as in the expected multi-utility theorem by a family of Lipschitz functions that is either weak* compact
or strictly $\succsim$-increasing. A natural question is if we can achieve both of these properties in
the representation. Unfortunately, this is possible only in extreme cases, due to the following gem of
topological order theory.
\bigskip

\noindent {\bf Schmeidler's Theorem \cite{schmeidler}.}
{\it Let $E$ be a connected topological space and let $\succsim$ be a continuous preorder on $E$
with $\succ \neq \emptyset$. If $\{x \in E: x \succ e\}$ and $\{x \in E: e \succ x\}$ are open for
every $e \in E$, then $\succsim$ is total.}
\bigskip

We now show that any preorder on $\Delta_1(X)$ that admits a strictly $\succsim$-increasing and
weak* compact Bernoulli multi-utility is either total or unable to render a strict ranking between any
two lotteries.

\begin{prop}\label{nogo}
Let $X$ be a pointed separable metric space and let $\succsim$ be a preorder on $\Delta_1(X)$
with $\succ \neq \emptyset$. If $\succsim$ admits a Lipschitz Bernoulli multi-utility
$\mathcal{U} \subset {\rm Lip}_0(X)$ which is
both strictly $\succsim$-increasing and weak* compact, it must be total.
\end{prop}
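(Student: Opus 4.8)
The plan is to derive totalness from Schmeidler's Theorem, applied to $\succsim$ regarded as a preorder on the connected space $E := \Delta_1(X)$ endowed with the $W_1$ topology. That $E$ is connected is clear, since $\Delta_1(X)$ is convex and $t \mapsto (1 - t)p + tq$ is a $W_1$-continuous path from $p$ to $q$ for any $p, q \in \Delta_1(X)$. What I must verify to invoke Schmeidler's Theorem is that $\succsim$ is $W_1$-continuous and that the strict contour sets $\{p : p \succ r\}$ and $\{p : r \succ p\}$ are $W_1$-open for every $r \in \Delta_1(X)$; the remaining hypothesis $\succ \neq \emptyset$ is given.

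First I would record two consequences of weak*-compactness. Since $\mathcal{U}$ is weak*-compact it is norm-bounded, say $L(u) \leq M$ for all $u \in \mathcal{U}$; hence, for each fixed $u$, the functional $U_u(p) := \int_X u\, {\rm d}p$ is $M$-Lipschitz on $(\Delta_1(X), W_1)$, because $|U_u(p) - U_u(p')| = |\int u\, {\rm d}(p - p')| \leq M\, W_1(p, p')$ by the Kantorovich--Rubinstein formula (\ref{krthm}). Continuity of $\succsim$ is then immediate: by (\ref{multiorder2}), $\succsim = \bigcap_{u \in \mathcal{U}} \{(p,q) : U_u(p) \geq U_u(q)\}$ is an intersection of $W_1$-closed sets, hence closed. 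I also note that $\mathcal{U} \neq \emptyset$, since otherwise (\ref{multiorder2}) would render all lotteries indifferent, contradicting $\succ \neq \emptyset$.

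The crux is the openness of the strict contour sets, and this is where both aspects of weak*-compactness enter. Fix $r \in \Delta_1(X)$ and set $\phi(p) := \inf_{u \in \mathcal{U}} \int_X u\, {\rm d}(p - r)$. I claim $\{p : p \succ r\} = \{p : \phi(p) > 0\}$. For the inclusion ``$\subseteq$'', strictly $\succsim$-increasing monotonicity gives $\int u\, {\rm d}(p - r) > 0$ for every $u \in \mathcal{U}$ whenever $p \succ r$; since $p - r \in {\rm KR}(X)$, the map $u \mapsto \int u\, {\rm d}(p - r)$ is weak*-continuous (Theorem \ref{duality} realizes ${\rm KR}(X)$ inside the predual of ${\rm Lip}_0(X)$), so by weak*-compactness of $\mathcal{U}$ its infimum is attained, forcing $\phi(p) > 0$. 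For ``$\supseteq$'', if $\phi(p) > 0$ then $\int u\, {\rm d}(p - r) \geq \phi(p) > 0$ for all $u$, so $p \succsim r$ by (\ref{multiorder2}); and $r \succsim p$ is impossible, as it would force $\int u\, {\rm d}(p - r) \leq 0$ for every $u$ (using $\mathcal{U} \neq \emptyset$). Thus $p \succ r$. Finally, $\phi$ is $W_1$-continuous: it is an infimum of the uniformly $M$-Lipschitz functions $p \mapsto \int u\, {\rm d}(p - r)$, and such an infimum—finite since it vanishes at $p = r$—is itself $M$-Lipschitz. Hence $\{p : p \succ r\} = \phi^{-1}((0, \infty))$ is open, and $\{p : r \succ p\}$ is handled identically using $p \mapsto \inf_{u \in \mathcal{U}} \int u\, {\rm d}(r - p)$.

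With every hypothesis in place, Schmeidler's Theorem yields that $\succsim$ is total. The delicate point—and the step I expect to be the main obstacle—is the ``$\subseteq$'' direction above: passing from ``$\int u\, {\rm d}(p - r) > 0$ for \emph{every} $u \in \mathcal{U}$'' to ``$\phi(p) > 0$'' genuinely requires the infimum to be attained, which is exactly what weak*-compactness (combined with weak*-continuity of integration against the fixed element $p - r \in {\rm KR}(X)$) supplies, while norm-boundedness of $\mathcal{U}$—also a consequence of weak*-compactness—is what makes $\phi$ continuous. Without compactness an infimum of strictly positive values could vanish, and the contour set could fail to be open, so both roles of the compactness hypothesis are essential.
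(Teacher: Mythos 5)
Your proposal is correct and follows essentially the same route as the paper: both invoke Schmeidler's theorem after using weak*-compactness of $\mathcal{U}$ (continuity of $u \mapsto \int u\, {\rm d}(p-r)$ on the compact set) to bound $\inf_{u}\int u\, {\rm d}(p-r)$ away from zero, and the norm bound $M$ on $\mathcal{U}$ to turn that into an open $W_1$-ball inside the strict upper contour set. Your packaging via the $M$-Lipschitz function $\phi$ and your explicit checks of connectedness, nonemptiness of $\mathcal{U}$, and $W_1$-continuity of $\succsim$ are just slightly more detailed versions of steps the paper asserts directly.
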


\begin{proof}
Suppose there is a strictly $\succsim$-increasing and weak* compact set $\mathcal{U} \subset
{\rm Lip}_0(X)$ which satisfies (\ref{multiorder2}). Then $\succsim$ is $W_1$-continuous. Fix an
arbitrary $q \in \Delta_1(X)$. In view of Schmeidler's theorem, it is enough to show that
$\{p: p \succ q\}$ and $\{p: q \succ p\}$ are open subsets of $\Delta_1(X)$. We do this for the
first of these sets; reversing the partial order then implies the same for the second one.

Suppose $p \succ q$. Then $\int u\, {\rm d}p > \int u\, {\rm d}q$ for every $u \in \mathcal{U}$.
Weak* compactness of $\mathcal{U}$ then implies that the differences 
$\int u\, {\rm d}p - \int u\, {\rm d}q$ are bounded away from zero, i.e., there exists an $\epsilon > 0$
with
\begin{equation*}
\int_X u\, {\rm d}(p - q) > \epsilon
\end{equation*}
for every $u \in \mathcal{U}$. Letting $M = \sup\{L(u): u \in \mathcal{U}\}$, it then follows that
$\int u\, {\rm d}(p' - q) > 0$ for every $p' \in {\rm ball}_{\epsilon/M}(p)$ and every $u \in \mathcal{U}$.
Thus ${\rm ball}_{\epsilon/M}(p) \subseteq \{p': p' \succ q\}$, showing that the latter set is open.
\end{proof}

\subsection{Lipschitz preorders of Levin type}
Let $E$ be a metric space. In view of \cite{levin}, we say that a preorder $\succsim$
on $E$ is {\it Lipschitz in the sense of Levin} if for every $x, y \in E$ for which $y \succsim x$ fails,
there exists a $\succsim$-increasing Lipschitz function $U$ on $E$ with $U(x) > U(y)$. Equivalently,
$\succsim$ is Lipschitz in the sense of Levin if, choosing an arbitrary base point for $E$, there exists
a family $\mathcal{U} \subseteq {\rm Lip}_0(E)$ such that
\begin{equation*}
x \geq y\qquad\mbox{if and only if}\qquad U(x) \geq U(y)\quad
\mbox{for every }U \in \mathcal{U}.
\end{equation*}

Any such preorder is continuous, that is, it is a closed subset of $E \times E$. It is shown in
\cite{levin} that if $E$ is separable, then any total preorder $\succsim$ on $E$ is Lipschitz in the
sense of Levin if and only if there exists a Lipschitz
$U: E \to \mathbb{R}$ such that $x \succsim y$ iff $U(x) \geq U(y)$.

Levin's definition deftly captures an important general notion, but it has the arguable drawback
of being formulated in terms of the utility functions one would use to represent the given
preorder. In decision theory, one considers properties that are directly expressible in terms of
a preorder (modeling one's preference relation) as {\it behavioral}. After all, $\succsim$ is, at least
in principle, observable in terms of the choices of an individual in pairwise choice
situations.\footnote{Of course, a continuity type axiom can never be tested in real life. However,
imposing such a property on a preference relation nevertheless allows for an obvious behavioral
interpretation.} In contrast, the idea of utility is a purely mathematical construct. It is very useful,
for it not only highlights the inner structure of a preorder, but it also makes working with that
preorder easier in practice. Utility functions are, however, not observable (even in finitistic settings),
and provide behavioral information about one's preferences only indirectly. It is therefore difficult
to identify the behavioral content of Levin's Lipschitz property.

Our Lipschitz property (Definition \ref{lipdef}) is behavioral, but it is less general than Levin's,
as it only applies to preorders on the space $\Delta_1(X)$ for some separable metric space $X$.
It is then natural to inquire into the relation between our property and Levin's for affine preorders,
the principal objects we study in this paper, on such a domain. An easy observation is that
if an affine preorder is Lipschitz in our sense then Theorem \ref{mainthm} implies that it is Lipschitz
in Levin's sense. Indeed, the involved (von Neumann-Morgenstern) utility functions
$U: p \mapsto \int u\, {\rm d}p$ for $u \in \mathcal{U}$, are Lipschitz for the $W_1$ metric
on $\Delta_1(X)$, with the same Lipschitz number as $u$, because
\begin{equation*}
|U(p) - U(q)| = \left|\int_X u\, {\rm d}(p - q)\right| \leq L(u)\cdot W_1(p,q)
\end{equation*}
for every $p, q \in \Delta_1(X)$.

We will next prove the converse of this observation, which shows that Levin's Lipschitz property
is equivalent to ours when applied to affine preorders on $\Delta_1(X)$. This is harder because our
von Neumann-Morgenstern utility functions are affine on $\Delta_1(X)$, whereas Levin's need not be.
Intuitively, however, we can linearize any non-affine Lipschitz function by replacing it with its
derivative at any point of differentiability. This idea is not immediately applicable to the case where
$\Delta_1(X)$ is infinite dimensional, but it can be made to work via finite dimensional approximation.

\begin{lemma}\label{levin1}
Let $S$ be a closed convex $m$-dimensional subset of $\mathbb{R}^m$ equipped with an affine preorder
$\succsim$. Suppose $f: S \to \mathbb{R}$ is 1-Lipschitz and $\succsim$-increasing, and let $x, y \in S$ and
$\epsilon > 0$. Then there is a point $z \in S$ at which $f$ is differentiable and its derivative $g$ satisfies
\begin{equation*}
f(x) - f(y) - \epsilon < g(x) - g(y) < f(x) - f(y) + \epsilon.
\end{equation*}
Moreover, the restriction of $g$ to $S$ is also $\succsim$-increasing.
\end{lemma}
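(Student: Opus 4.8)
The plan is to produce the point $z$ and the functional $g$ by handling the two required properties of $g$ — that its restriction to $S$ is $\succsim$-increasing, and that $g(x)-g(y)\approx f(x)-f(y)$ — essentially separately, and then reconciling them. Throughout I would work in the interior $S^\circ$, which is nonempty because $S$ is $m$-dimensional, and I would first move $x$ and $y$ inside. Fixing $o\in S^\circ$ and setting $x_s:=(1-s)x+so$, $y_s:=(1-s)y+so$ for small $s>0$, convexity of $S^\circ$ puts the whole segment $[y_s,x_s]$ in $S^\circ$, while continuity of $f$ makes $f(x_s)-f(y_s)$ and the vector $x_s-y_s$ as close as desired to $f(x)-f(y)$ and $x-y$. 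Since the eventual $g$ has norm at most $1$, controlling these two differences lets me deduce the estimate for $x,y$ from the analogous estimate for $x_s,y_s$; so I may assume $[y,x]\subset S^\circ$.

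First I would establish the \emph{monotonicity of derivatives}, which simultaneously yields the ``moreover'' clause. Extend $f$ to a $1$-Lipschitz function on $\mathbb{R}^m$ and apply Rademacher's theorem, so that $f$ is differentiable on a full-measure subset of $S^\circ$, its derivative there agreeing with the gradient of the extension. The claim is that at \emph{every} interior point $z$ of differentiability the linear functional $g:=Df(z)$ is $\succsim$-increasing. Indeed, given $a,b\in S$ with $a\succsim b$, affinity and the first part of Lemma \ref{basiclemma} give $(1-\lambda)z+\lambda a\succsim(1-\lambda)z+\lambda b$ for every $\lambda\in[0,1]$, whence $f\bigl((1-\lambda)z+\lambda a\bigr)\ge f\bigl((1-\lambda)z+\lambda b\bigr)$ because $f$ is $\succsim$-increasing. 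The two sides agree at $\lambda=0$, so comparing right derivatives there (which exist and equal $g(a-z)$ and $g(b-z)$ by differentiability at $z$) yields $g(a-z)\ge g(b-z)$, that is, $g(a)\ge g(b)$. Thus \emph{any} interior differentiability point already supplies a $\succsim$-increasing derivative.

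The remaining, harder task is the \emph{approximation} $g(x)-g(y)\approx f(x)-f(y)$, and this is where I expect the main obstacle. The mean value theorem fails for merely Lipschitz functions, so a single unaveraged directional derivative of $f$ along $v:=x-y$ need not come near $f(x)-f(y)$: for $f(t)=|t|$ on $S=[-2,2]$ with $x=1$, $y=-1$ one has $f(x)-f(y)=0$ while $Df(z)(x-y)=\pm 2$ at every point of differentiability, so \emph{no} raw derivative of $f$ realizes the target. The way around this is to average derivatives rather than use one. Concretely I would mollify, setting $f_\delta:=f*\phi_\delta$ for a standard bump $\phi_\delta$ supported in the $\delta$-ball. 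For small $\delta$, $f_\delta$ is smooth near $[y,x]$, converges to $f$ uniformly there, and at each such point $\nabla f_\delta(z)=\int \nabla f(z-w)\,\phi_\delta(w)\,dw$ is an \emph{average} of gradients of $f$ at nearby interior points. Two facts now combine: each $\nabla f(z-w)$ is $\succsim$-increasing by the previous paragraph, and the condition ``$g(a)\ge g(b)$ whenever $a\succsim b$'' is linear in $g$ and hence preserved under averaging, so $\nabla f_\delta(z)$ is again $\succsim$-increasing; meanwhile, since $f_\delta$ is smooth, the ordinary mean value theorem produces $z_\delta\in[y,x]$ with $\nabla f_\delta(z_\delta)(x-y)=f_\delta(x)-f_\delta(y)$, which tends to $f(x)-f(y)$ as $\delta\to 0$. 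Taking $\delta$ small, $g:=\nabla f_\delta(z_\delta)$ is then a derivative at the single interior point $z_\delta$ that is $\succsim$-increasing and satisfies the required two-sided estimate.

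The one interface I would want to pin down carefully is the passage between the two stages. Preservation of $\succsim$-increasingness under averaging is immediate, but it requires \emph{every} gradient in the support of the mollifier to be taken at an interior point, which is exactly why pushing $[y,x]$ into $S^\circ$ at the outset is essential. The rest is bookkeeping: tracking the three small errors — from $x,y\mapsto x_s,y_s$, from the mollification $f_\delta\to f$, and from the uniform bound $\|g\|\le 1$ used to transfer the estimate back to the original $v$ — and choosing $s$ and $\delta$ so that they sum to less than $\epsilon$. The conceptual content, and the real obstacle, is the recognition that one must work with an averaged derivative rather than a single directional derivative of $f$ itself, since the latter can miss $f(x)-f(y)$ entirely.
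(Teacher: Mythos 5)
Your treatment of the ``moreover'' clause coincides with the paper's own argument: the paper translates so that $z=0$, normalizes $f(0)=0$, uses affinity to get $\frac{1}{n}x'\succsim\frac{1}{n}y'$, and lets $n\to\infty$ in $nf(x'/n)\geq nf(y'/n)$; your version with the convex combination $(1-\lambda)z+\lambda a$ and right derivatives at $\lambda=0$ is the same computation. The divergence is in the first part, and there you have put your finger on a genuine defect in the lemma as stated. The paper disposes of the existence of $z$ with a one-line appeal to ``a standard consequence of Rademacher's theorem,'' but your example $f(t)=|t|$ on $S=[-2,2]$ with $x=1$, $y=-1$ shows that no such $z$ exists: equality is an affine preorder for which every function is $\succsim$-increasing, so all hypotheses are met, yet $g(x)-g(y)=\pm 2$ at every point of differentiability while $f(x)-f(y)=0$. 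What Rademacher plus Fubini actually yields is that $f(x)-f(y)$ is an \emph{average} of the values $Df(z)(x-y)$ along (translates of) the segment; this gives each one-sided inequality at some point but not both at the same point. So the lemma is false as written, and no proof of it as stated can exist.

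What you prove instead --- that there is a $\succsim$-increasing linear functional $g$ of norm at most $1$, obtained as an average of derivatives of $f$ at interior points, with $g(x)-g(y)$ within $\epsilon$ of $f(x)-f(y)$ --- is exactly what Lemma \ref{levin2} consumes (it needs only an affine, $\succsim$-increasing, $1$-Lipschitz $h$ matching $f$ at $x$ and $y$), so your repair leaves Theorem \ref{levinthm} intact. The mollification argument is sound: pushing $[y,x]$ into $S^\circ$ guarantees that every gradient entering the average $\nabla f_\delta=\nabla f*\phi_\delta$ is taken at an interior differentiability point and hence is $\succsim$-increasing by your first step; both monotonicity and the norm bound survive averaging because each is a closed convex condition on $g$; and the classical mean value theorem for the smooth $f_\delta$ supplies the two-sided approximation, with the three error terms you list controlled by choosing $s$ and $\delta$ small. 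It is worth noting that the fragmentary alternative argument appearing after the paper's bibliography, which invokes Lebourg's mean value theorem to obtain $f(x)-f(y)\in\partial f(z)(x-y)$ with $\partial f(z)$ the Clarke generalized gradient, lands on the same kind of object --- a convex combination of limiting gradients rather than a single derivative --- corroborating your diagnosis that averaging is not a convenience but a necessity here.
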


\begin{proof}
The first part is a standard consequence of Rademacher's theorem; see, e.g., the proof of
\cite[Theorem 1.41]{weaver}. For the second part, fix $x', y' \in S$ with $x' \succsim y'$. By translating
$S$, we may assume $z = 0$, and by shifting $f$ by a constant we may assume $f(0) = 0$. Now
for any $n \in \mathbb{N}$ we have $\frac{1}{n}x' \succsim \frac{1}{n}y'$ by affinity, so that
$nf(\frac{1}{n}x') \geq nf(\frac{1}{n}y')$ (since $f$ is $\succsim$-increasing). Taking $n \to \infty$, this
yields $g(x') \geq g(y')$. (Since $f(0) = 0$, $\frac{f(x'/n)}{1/n} \to g(x')$ directly from the definition of the
derivative.) This shows that $g$ is $\succsim$-increasing.
\end{proof}

\begin{lemma}\label{levin2}
With the same hypotheses as in Lemma \ref{levin1}, there is an $\succsim$-increasing affine 1-Lipschitz
function $h: S \to \mathbb{R}$ which satisfies $h(x) = f(x)$ and $h(y) = f(y)$.
\end{lemma}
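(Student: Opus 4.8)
The plan is to obtain $h$ as a limit of the linear approximations furnished by Lemma \ref{levin1}. I would first reduce the interpolation requirement to a single scalar condition on a linear functional. Writing a prospective solution in the form $h(v) = g(v) - g(x) + f(x)$ for a linear functional $g$ on $\mathbb{R}^m$ forces $h(x) = f(x)$ automatically, and then $h(y) = f(y)$ becomes equivalent to the equation $g(x) - g(y) = f(x) - f(y)$. Such an $h$ is affine; it is $1$-Lipschitz precisely when $g$ has operator norm at most $1$; and its restriction to $S$ is $\succsim$-increasing precisely when $g$ is (the additive constant $f(x) - g(x)$ is irrelevant to monotonicity). So it suffices to produce a linear functional $g^*$ on $\mathbb{R}^m$ with $\|g^*\| \le 1$, whose restriction to $S$ is $\succsim$-increasing, and with $g^*(x) - g^*(y) = f(x) - f(y)$.

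To build $g^*$, I would apply Lemma \ref{levin1} with the given $x, y$ and with $\epsilon = \frac1n$ for each $n \in \mathbb{N}$. This yields points $z_n \in S$ at which $f$ is differentiable, with derivatives $g_n$ that are $\succsim$-increasing on $S$ and satisfy $|(g_n(x) - g_n(y)) - (f(x) - f(y))| < \frac1n$. Because $f$ is $1$-Lipschitz, at any point of differentiability its derivative has operator norm at most $1$; hence $\|g_n\| \le 1$, and the sequence $(g_n)$ lies in the closed unit ball of the dual space $(\mathbb{R}^m)^*$.

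Since $(\mathbb{R}^m)^*$ is finite-dimensional, this closed unit ball is compact, so I would pass to a subsequence along which $g_n \to g^*$ for some $g^*$ with $\|g^*\| \le 1$. Letting $n \to \infty$ in the displayed inequality gives $g^*(x) - g^*(y) = f(x) - f(y)$. It remains only to check that $g^*$ is $\succsim$-increasing on $S$, and this is the one step I expect to require care. The point is that $\succsim$-monotonicity is a \emph{closed} condition on functionals: for each pair $a, b \in S$ with $a \succsim b$, the set $\{g : g(a - b) \ge 0\}$ is a closed half-space in $(\mathbb{R}^m)^*$, and the $\succsim$-increasing functionals form exactly the intersection of all these half-spaces, hence a closed set. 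Each $g_n$ lies in this set, so its limit $g^*$ does too.

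Finally I would set $h(v) := g^*(v) - g^*(x) + f(x)$. By the reduction in the first paragraph, $h$ is an affine, $1$-Lipschitz, $\succsim$-increasing function on $S$ with $h(x) = f(x)$ and $h(y) = f(y)$, as required. The only genuine obstacle is the passage of $\succsim$-monotonicity to the limit, which the closedness observation settles; the compactness and interpolation steps are routine bookkeeping.
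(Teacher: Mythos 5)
Your proposal is correct and follows essentially the same route as the paper: apply Lemma \ref{levin1} with $\epsilon = \frac{1}{n}$, normalize the resulting linear maps so that the value at $x$ matches $f(x)$, and extract a cluster point using compactness in the finite-dimensional setting. The only (welcome) difference is that you spell out explicitly why the $1$-Lipschitz bound and the $\succsim$-monotonicity survive the passage to the limit, which the paper leaves implicit.
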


\begin{proof}
For each $n \in \mathbb{N}$, taking $\epsilon = \frac{1}{n}$ in Lemma \ref{levin1}, we can find a
linear map $g_n: \mathbb{R}^m \to \mathbb{R}$ which is 1-Lipschitz and $\succsim$-increasing on $S$ and
satisfies $f(x) - f(y) - \frac{1}{n} < g_n(x) - g_n(y) < f(x) - f(y) + \frac{1}{n}$. Let $h_n = g_n|_S + f(x) - g_n(x)$;
then $h_n$ is a 1-Lipschitz, $\succsim$-increasing, affine function on $S$ which satisfies $h_n(x) = f(x)$ and
$|h_n(y) - f(y)| < \frac{1}{n}$. Taking a cluster point of the sequence $(h_n)$ yields the desired conclusion.
\end{proof}

\begin{theo}\label{levinthm}
Let $X$ be a separable metric space and let $\succsim$ be an affine preorder on $\Delta_1(X)$.
Then $\succsim$ is Lipschitz if and only if it is Lipschitz in the sense of Levin.
\end{theo}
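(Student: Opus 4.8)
For the forward implication I would simply recall the computation already carried out in the discussion preceding the theorem: if $\succsim$ is Lipschitz in our sense, then Theorem \ref{mainthm} produces a family of Lipschitz functions on $X$ representing $\succsim$, and the associated functionals $U: p \mapsto \int u\, {\rm d}p$ are $\succsim$-increasing and $W_1$-Lipschitz on $\Delta_1(X)$, which is exactly what Levin's condition asks for. So the real content is the converse, and the plan is to assume $\succsim$ is affine and Lipschitz in the sense of Levin and verify Definition \ref{lipdef} \emph{directly}. This is the natural route because Definition \ref{lipdef} is finitary --- it constrains only the five measures $p,q,p',q',$ and their mixtures at a time --- which is precisely what will let a finite-dimensional linearization suffice, avoiding any attempt to globally affinize Levin's utility on the infinite-dimensional space $\Delta_1(X)$.

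First I would fix $p,q \in \Delta_1(X)$ for which $q\succsim p$ fails, and use Levin's condition to obtain a $\succsim$-increasing Lipschitz function $F$ on $\Delta_1(X)$ with $F(p) > F(q)$; after rescaling I may take $F$ to be $1$-Lipschitz for $W_1$, and I set $K := F(p) - F(q) > 0$. The claim to prove is that Definition \ref{lipdef} holds with this $K$. To that end I would fix arbitrary $p',q' \in \Delta_1(X)$ and $\lambda$ with $0 \le \lambda < \frac{K}{K + W_1(p',q')}$, and aim to show $q \oplus_\lambda q' \succsim p \oplus_\lambda p'$ fails. The key move is to pass to the finite-dimensional polytope $S := {\rm conv}\{p,q,p',q'\} \subseteq \Delta_1(X)$: it is convex, so $\succsim$ restricts to an affine preorder on $S$, and $F|_S$ is a $1$-Lipschitz $\succsim$-increasing function there. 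Since the differences of measures in $S$ span a finite-dimensional subspace of ${\rm KR}(X)$ on which $W_1$ agrees with $\|\cdot\|_{\rm KR}$ (Proposition \ref{krprop}), I may view $S$ as a full-dimensional compact convex subset of a finite-dimensional normed space and apply Lemma \ref{levin2} to $f = F|_S$ at the two points $p$ and $q$. This hands me an affine, $\succsim$-increasing, $1$-Lipschitz function $G$ on $S$ with $G(p) = F(p)$ and $G(q) = F(q)$, whence $G(p) - G(q) = K$ and, by $1$-Lipschitzness, $G(q') - G(p') \le W_1(p',q')$.

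At this point the computation from Proposition \ref{liputilities} applies verbatim. As $p \oplus_\lambda p'$ and $q \oplus_\lambda q'$ lie in $S$ and $G$ is affine there,
\[
G\big(p \oplus_\lambda p'\big) - G\big(q \oplus_\lambda q'\big) = (1-\lambda)\big(G(p) - G(q)\big) + \lambda\big(G(p') - G(q')\big) \ge (1-\lambda)K - \lambda W_1(p',q'),
\]
and the choice of $\lambda$ makes the right-hand side strictly positive, so $G(p \oplus_\lambda p') > G(q \oplus_\lambda q')$. If $q \oplus_\lambda q' \succsim p \oplus_\lambda p'$ held, then $G$ being $\succsim$-increasing on $S$ would force the reverse inequality $G(q \oplus_\lambda q') \ge G(p \oplus_\lambda p')$, a contradiction. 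Hence the comparison fails, confirming Definition \ref{lipdef} with $K = K(p,q) = F(p)-F(q)$, and therefore $\succsim$ is Lipschitz.

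The step I expect to require the most care --- and the only genuine subtlety --- is the sense in which Lemma \ref{levin2} is invoked. I would apply it in the finite-dimensional span of $S$ equipped with the norm $\|\cdot\|_{\rm KR}$ rather than a Euclidean norm, so that the $1$-Lipschitz conclusion for $G$ is relative to $W_1$; this is what keeps the bound on $\lambda$ uniform in $p'$ and $q'$, since identifying the span with a Euclidean space would introduce a simplex-dependent distortion and wreck the clean form $\frac{K}{K + W_1(p',q')}$. The lemmas remain valid at this level of generality because their proofs use only Rademacher's theorem and the affine structure, and neither Lebesgue-null sets nor differentiability depend on the choice of norm on $\mathbb{R}^m$. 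The degenerate configurations are harmless: when $p' = q'$ the estimate reduces to $(1-\lambda)K > 0$, and when the four measures are affinely dependent one simply works in their affine hull, which has dimension at least $1$ because $F(p) \ne F(q)$ forces $p \ne q$.
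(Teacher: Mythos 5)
Your argument is correct, and the converse direction takes a genuinely different route from the paper's. Both proofs hinge on the same key tool, Lemma \ref{levin2}, which affinizes Levin's utility on a finite-dimensional polytope while preserving its values at $p$ and $q$, its monotonicity, and its Lipschitz constant. The divergence is in what happens next. The paper runs the affinization over the net of \emph{all} finite subsets $F \ni p,q$, lifts each resulting affine functional to a norm-one element of ${\rm KR}(X)^*$ by Hahn--Banach, identifies it with a $1$-Lipschitz function $u_F$ on $X$ via Theorem \ref{duality}, extracts a weak* cluster point $u_{p,q}$, and only then concludes via Proposition \ref{liputilities} applied to $\mathcal{U} = \{u_{p,q} : q\succsim p \text{ fails}\}$. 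You instead stop at the single four-point polytope ${\rm conv}\{p,q,p',q'\}$ and read off Definition \ref{lipdef} directly from the affine minorant $G$, with the uniformity in $(p',q')$ coming for free because Lemma \ref{levin2} pins $G(p)-G(q)$ to the fixed value $K = F(p)-F(q)$. This is more economical: it needs neither the duality theorem, nor Hahn--Banach, nor the subnet/weak*-compactness argument, and it makes transparent exactly why the threshold has the form $\frac{K}{K+W_1(p',q')}$. What the paper's longer route buys is an explicit Lipschitz Bernoulli utility $u_{p,q}$ on the prize space $X$ itself separating $p$ from $q$ --- i.e., a direct construction of the representing family of Theorem \ref{mainthm} from Levin's data --- which your argument does not produce (you obtain the representation only a posteriori, by composing your conclusion with Theorem \ref{mainthm}). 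Your handling of the one delicate point --- that Lemmas \ref{levin1} and \ref{levin2} must be applied in the affine hull of the polytope equipped with the ${\rm KR}$ norm rather than a Euclidean norm, and that Rademacher's theorem and the $1$-Lipschitz bound on the derivative are insensitive to this choice --- is exactly the care the paper itself takes implicitly when it applies Lemma \ref{levin2} to the polytopes $S_F \subseteq {\rm KR}(X)$.
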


\begin{proof}
The forward direction follows immediately from Theorem \ref{mainthm}. For the reverse direction,
suppose $\succsim$ is Lipschitz in the sense of Levin. Fix $p, q \in \Delta_1(X)$ and suppose
$q \succsim p$ fails. Then Levin's condition provides us with a 1-Lipschitz $\succsim$-increasing function
$f: \Delta_1(X) \to \mathbb{R}$ satisfying $f(p) > f(q)$. We require a Lipschitz function $u = u_{p,q}$ on $X$
satisfying $\int u\, {\rm d}p > \int u\, {\rm d}q$, and $\int u\, {\rm d}p' \geq \int u\, {\rm d}q'$
whenever $p' \succsim q'$; Proposition \ref{liputilities} can then be applied to show that $\succsim$ is
Lipschitz using the set $\mathcal{U} = \{u_{p,q}: q \succsim p$ fails$\}$.

Fix a base point $e$ of $X$.
For each finite subset $F$ of $\Delta_1(X)$ which contains $p$ and $q$, let $S_F$ be its convex hull.
This is a finite dimensional closed convex subset, and the restriction of $f$ to $S_F$ is 1-Lipschitz and
$\succsim$-increasing, so Lemma \ref{levin2}
yields an affine 1-Lipschitz function $h_F: S_F \to \mathbb{R}$ which is $\succsim$-increasing and satisfies
$h_F(p) = f(p)$ and $h_F(q) = f(q)$. Then define $k_F$ on $\{\alpha(p' - q'): \alpha \geq 0$ and
$p', q' \in S_F\} \subseteq {\rm KR}(X)$ by $k_F(\alpha(p' - q')) = \alpha(h_F(p') - h_F(q'))$. This is
a bounded linear functional of norm at most 1 on a finite dimensional subspace of ${\rm KR}(X)$.
By the Hahn-Banach theorem, it extends to a bounded linear functional of norm at most 1 on all of
${\rm KR}(X)$, and hence is given by integration against a 1-Lipschitz function $u_F \in {\rm Lip}_0(X)$
(Theorem \ref{duality}). Ordering the finite subsets $F$ by inclusion, we can pass to a subnet of $(u_F)$
which converges weak* to some $u \in {\rm Lip}_0(X)$. This will be a 1-Lipschitz function on $X$ with
the property that
\begin{equation*}
h_F(p') - h_F(\delta_e) = k_F(p' - \delta_e)  = \int_X u_F\, {\rm d}(p' - \delta_e)
= \int_X u_F\, {\rm d}p' \to \int_X u\, {\rm d}p'
\end{equation*}
for every $p' \in \Delta_1(X)$. Thus, if $p' \succsim q'$, then $h_F(p') \geq h_F(q')$ (for all $F$ containing
$p'$ and $q'$) implies $\int u\, {\rm d}p' \geq \int u\, {\rm d}q'$, and also
$\int u\, {\rm d}p = f(p) - h_F(\delta_e) > f(q) - h_F(\delta_e) = \int u\, {\rm d}q$, so $u$ has the requires properties.
\end{proof}

\section{On the maximization of Lipschitz affine preorders}
Let $X$ be a separable metric space and let $\succsim$ be a Lipschitz affine predorder on $\Delta_1(X)$.
We presently have two approaches to representing $\succsim$ in terms of Lipschitz Bernoulli
multi-utilities, one weak* compact and convex (Corollary \ref{convexmulti}) and the other strictly
$\succsim$-increasing and convex (Theorem \ref{proper}). Each of these approaches
has its own advantages. For instance,
in the second case, maximizing any of the Bernoulli utility functions on a given set $P \subseteq \Delta_1(X)$
yields a $\succsim$-maximal lottery in $P$. Thus, putting together the maxima of all the involved Bernoulli
utilities on $P$ yields a lower estimate for ${\rm MAX}(P, \succsim)$, the set of all $\succsim$-maximal
lotteries in $P$ (i.e., the set of all $p \in P$ such that no $q \in P$ satisfies $q \succ p$). On the
other hand, carrying out the same procedure with Bernoulli utilities of the first type  yields an upper
estimate for ${\rm MAX}(P, \succsim)$. That is, every $\succsim$-maximal lottery in $P$ maximizes at least
one of the Bernoulli utilities at hand in that case. This is the content of the next result.

\begin{prop}
Let $X$ be a pointed separable metric space and let $\succsim$ be a preorder on $\Delta_1(X)$. Let
$\mathcal{U}, \mathcal{V} \subseteq {\rm Lip}_0(X)$ be Bernoulli multi-utilities for $\succsim$, and
suppose both are convex, with $\mathcal{U}$ weak* compact and $\mathcal{V}$ strictly
$\succsim$-increasing. Then
\begin{equation*}
\bigcup_{v \in \mathcal{V}} \underset{p \in P}{\rm arg\, max}
\int_X v\, {\rm d}p \quad \subseteq \quad {\rm MAX}(P, \succsim)
\quad\subseteq\quad \bigcup_{u \in \mathcal{U}}
\underset{p \in P}{\rm arg\, max} \int_X u\, {\rm d}p
\end{equation*}
for any $P \subseteq \Delta_1(X)$.
\end{prop}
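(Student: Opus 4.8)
The plan is to prove the two set inclusions separately. Throughout I identify $\int_X u\, {\rm d}(q - p)$ with the dual pairing $\langle q - p, u\rangle$ supplied by Theorem \ref{duality} (note that $q - p \in {\rm KR}(X)$ whenever $p, q \in \Delta_1(X)$), and I exploit the strict increasingness of $\mathcal{V}$ for the left inclusion and the weak* compactness and convexity of $\mathcal{U}$ for the right one. The existence of the two prescribed multi-utilities is itself guaranteed by Corollary \ref{convexmulti} and Theorem \ref{proper}.

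For the left inclusion I argue directly, needing neither convexity nor compactness. Fix $v \in \mathcal{V}$ and let $p^*$ maximize $p \mapsto \int_X v\, {\rm d}p$ over $P$. Were $p^* \notin {\rm MAX}(P, \succsim)$, there would be some $q \in P$ with $q \succ p^*$; since $\mathcal{V}$ is strictly $\succsim$-increasing this would force $\int_X v\, {\rm d}q > \int_X v\, {\rm d}p^*$, contradicting the maximality of $p^*$. Hence every such $p^*$ lies in ${\rm MAX}(P, \succsim)$, which is the first inclusion.

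The right inclusion is the substantive direction. Fix $p^* \in {\rm MAX}(P, \succsim)$; I seek a single $u \in \mathcal{U}$ with $\int_X u\, {\rm d}(q - p^*) \le 0$ for every $q \in P$. For each $q$ the functional $\phi_q(u) := \int_X u\, {\rm d}(q - p^*)$ is weak* continuous on $\mathcal{U}$, since integration against $q - p^* \in {\rm KR}(X)$ is weak* continuous by the very definition of the weak* topology; thus $g(u) := \sup_{q \in P}\phi_q(u)$ is weak* lower semicontinuous and convex, being a supremum of continuous affine functions. As $\mathcal{U}$ is weak* compact, $g$ attains its infimum on $\mathcal{U}$, and if that infimum is $\le 0$ the minimizer is the desired $u$. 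The crux is to exclude $\inf_{\mathcal{U}} g > 0$. In that case the weak* open sets $\{u : \phi_q(u) > 0\}$, $q \in P$, cover $\mathcal{U}$, so by compactness finitely many $q_1, \dots, q_n \in P$ already do, whence $\min_{u \in \mathcal{U}} \max_i \phi_{q_i}(u) > 0$. Applying Sion's minimax theorem to the bilinear map $(u, \lambda) \mapsto \sum_i \lambda_i \phi_{q_i}(u)$ on the product of the compact convex set $\mathcal{U}$ with the simplex (this is exactly where convexity and weak* compactness of $\mathcal{U}$ enter) yields weights $\lambda_i \ge 0$ with $\sum_i \lambda_i = 1$ and $\int_X u\, {\rm d}(q^* - p^*) > 0$ for all $u \in \mathcal{U}$, where $q^* := \sum_i \lambda_i q_i$. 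By the multi-utility property (\ref{multiorder2}) this says $q^* \succ p^*$.

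The step I expect to be the main obstacle is precisely this last one: the dominating lottery $q^*$ is a convex combination of members of $P$, hence lies in ${\rm conv}(P)$ but need not lie in $P$, whereas $q^* \succ p^*$ contradicts $p^* \in {\rm MAX}(P, \succsim)$ only when $q^* \in P$. I therefore expect the right inclusion to close as stated precisely when $P$ is convex; a Pareto-type configuration of three point masses on a three-point space (with $\mathcal{U}$ a segment in ${\rm Lip}_0(X)$ inducing the coordinatewise order) shows that for a genuinely nonconvex $P$ a $\succsim$-maximal lottery can fail to maximize any $u \in \mathcal{U}$, so some convexity hypothesis on $P$ appears unavoidable. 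Granting it, the separation argument closes, and the role of weak* compactness of $\mathcal{U}$ is twofold: it furnishes the finite subcover and guarantees attainment in the minimization of $g$.
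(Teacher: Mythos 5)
Your proof of the left inclusion is the same as the paper's and is fine. For the right inclusion you take a genuinely different route: the paper fixes $p^* \in {\rm MAX}(P,\succsim)$, separates the set $\{\zeta_p : p \in P\}$ (where $\zeta_p(u) = \int u\, {\rm d}(p - p^*)$) from the open cone $\{f \in C(\mathcal{U}) : f > 0\}$ to produce a probability measure $\sigma \in \Delta(\mathcal{U})$ with $\int \zeta_p\, {\rm d}\sigma \leq 0$ for all $p \in P$, and then uses the Choquet barycenter of $\sigma$ (which lies in $\mathcal{U}$ by weak* compactness and convexity) as the desired $u$. Your finite-subcover-plus-Sion argument reaches the same place with more elementary tools, and the two uses you make of weak* compactness (attainment of the infimum of $g$, and extraction of the finite subcover) play exactly the role that compactness of $\mathcal{U}$ plays in the paper's appeal to $C(\mathcal{U})^*$ and to the existence of barycenters. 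Both arguments are correct up to the point you flag.

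The obstacle you identify is real, and in fact it is present in the paper's own proof: the assertion there that ``$\{\zeta_p : p \in P\}$ is a convex subset of $C(\mathcal{U})$'' holds only when $P$ is convex, since $p \mapsto \zeta_p$ is affine and the affine image of a nonconvex set need not be convex. Without that convexity the separation step (in the paper's version) and the passage from $q^* \in {\rm conv}(P)$ to a contradiction with maximality (in yours) both fail, and your sketched counterexample does work: take $X = \{1,2,3\}$ with the discrete metric and base point $3$, let $\mathcal{U}$ be the segment joining $u_1 = (1,0,0)$ and $u_2 = (0,1,0)$ in ${\rm Lip}_0(X) \cong \mathbb{R}^2$ (so $p \succsim q$ iff $p\{1\} \geq q\{1\}$ and $p\{2\} \geq q\{2\}$), and let $P = \{\delta_1, \delta_2, r\}$ with $r = 0.4\,\delta_1 + 0.4\,\delta_2 + 0.2\,\delta_3$. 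Then $r \in {\rm MAX}(P,\succsim)$, but $\int u_t\, {\rm d}r = 0.4 < \max(t, 1-t) = \max\{\int u_t\, {\rm d}\delta_1, \int u_t\, {\rm d}\delta_2\}$ for every $u_t = (t, 1-t, 0) \in \mathcal{U}$, so $r$ maximizes no member of $\mathcal{U}$ over $P$; the offending dominating lottery $\frac{1}{2}\delta_1 + \frac{1}{2}\delta_2$ lives in ${\rm conv}(P) \setminus P$. So the second inclusion genuinely requires $P$ to be convex (or ${\rm MAX}(P,\succsim)$ to be replaced by ${\rm MAX}({\rm conv}(P),\succsim) \cap P$, or similar), and with that hypothesis added your argument closes completely. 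You have found an error in the statement rather than left a gap in your own proof.
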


\begin{proof}
Fix $P \subseteq \Delta_1(X)$. The first containment follows readily from the fact that every
member of $\mathcal{V}$ is strictly $\succsim$-increasing. To prove the second containment,
take any $\sigma \in \Delta(\mathcal{U})$, i.e., any Borel probability measure on $\mathcal{U}$
equipped with the relative weak* topology. By \cite[Proposition 1.1]{phelps2}, there is a (unique)
$u \in \mathcal{U}$ such that $F(u) = \int F(v)\, \sigma({\rm d}v)$ for every weak* continuous
linear functional $F$ on ${\rm Lip}_0(X)$. But $w \mapsto \int w\, {\rm d}p$ is indeed such a
functional on ${\rm Lip}_0(X)$ for every $p \in \Delta_1(X)$. Consequently,
\begin{equation*}
\int_X u\, {\rm d}p = \int_{\mathcal{U}} \int_X v\, {\rm d}p\, \sigma({\rm d}v)
\end{equation*}
for every $p \in \Delta_1(X)$. We thus conclude that for every $\sigma \in \Delta(\mathcal{U})$,
there is a $u \in \mathcal{U}$ such that
\begin{equation*}
\underset{p \in P}{\rm arg\, max} \int_X u\, {\rm d}p =
\underset{p \in P}{\rm arg\, max} \int_{\mathcal{U}} \int_X v\, {\rm d}p\, \sigma({\rm d}v)
\end{equation*}
for every $p \in P$. This proves the $\supseteq$ part of the equation
\begin{equation}\label{argmax}
\bigcup_{u \in \mathcal{U}} \underset{p \in P}{\rm arg\, max} \int_X u\, {\rm d}p =
\bigcup_{\sigma \in \Delta(\mathcal{U})} \underset{p \in P}{\rm arg\, max}
\int_{\mathcal{U}} \int_X v\, {\rm d}p\, \sigma({\rm d}v)
\end{equation}
whose $\subseteq$ part is trivial.

Now take any $p^* \in {\rm MAX}(P, \succsim)$. We will complete the proof by showing that $p^*$
belongs to the right hand side of (\ref{argmax}). To this end, for any $p \in \Delta_1(X)$, define
$\zeta_p: \mathcal{U} \to \mathbb{R}$ by $\zeta_p(u) := \int u\, {\rm d}(p - p^*)$. It is plain that
$\{\zeta_p: p \in P\}$ is a convex subset of $C(\mathcal{U})$. If there were a $p \in P$ such that
$\zeta_p(u) > 0$ for each $u \in \mathcal{U}$, we would have $p \succ p^*$, contradicting
$\succsim$-maximality of $p^*$ in $P$. Thus $\{\zeta_p: p \in P\}$ is disjoint from the open convex
cone $\{f \in C(\mathcal{U}): f > 0\}$. So by the separating hyperplane theorem, there exists a positive
linear functional $F \in C(\mathcal{U})^*$ such that $F(\zeta_p) \leq 0$ for every $p \in P$,
i.e., there is a $\sigma \in \Delta(\mathcal{U})$ such that
\begin{equation*}
\int_{\mathcal{U}} \zeta_p(u)\, \sigma({\rm d}v) \leq 0
\end{equation*}
for every $p \in P$. Thus
\begin{equation*}
p^* \in \underset{p \in P}{\rm arg\, max} \int_{\mathcal{U}}\int_X v\, {\rm d}p\, \sigma({\rm d}v),
\end{equation*}
as desired.
\end{proof}

This result suggests that one should look for a Lipschitz Bernoulli multi-utility for a preorder
$\succsim$ which is both weak* compact and strictly $\succsim$-increasing. Unfortunately,
as we have seen in Proposition \ref{nogo}, this is not possible unless $\succsim$ is total
(in which case there is no need for a scalarization exercise).

\section{Application: Representation of the affine core of a preorder}

\subsection{Affine core of a preorder}
Let $\succsim$ be a preorder on a nonempty convex subset of a linear space. By the {\it affine core}
of $\succsim$, we mean the largest affine subrelation of $\succsim$, and we denote this subrelation
as $\succsim_{\rm aff}$. It is plain that $\succsim$ is affine iff $\succsim\, =\, \succsim_{\rm aff}$. This
concept is used widely in boundedly rational decision theory when studying preference relations over
lotteries, or state-dependent lotteries (see Section 7), that may fail to be affine. In fact, it
was first introduced in the latter context by Ghirardato, Maccheroni, and Marinacci \cite{gmm}, who
referred to it as the ``revealed unambiguous preference.'' In the context of risk, a variant (which is
really a superrelation of $\succsim_{\rm aff}$) is often referred to as the ``linear core'' of $\succsim$. This 
concept was studied in depth in \cite{cv, cvdo}, among others.\footnote{Put precisely, these authors define
the {\it linear core} of a preorder on a nonempty convex subset $S$ of $\Delta(X)$ as the preorder $\unrhd$
on $S$ with $p \unrhd q$ iff $p \oplus_\lambda r \succsim q \oplus_\lambda r$ for all $r \in S$ and
$\lambda \in (0,1]$. Then $\unrhd$ is weakly affine, but it need not be affine. As we shall
see shortly, however, $\unrhd$ becomes affine if $\succsim$ is suitably continuous, and the difference
between the two notions disappears.}

In general, the affine core of $\succsim$ need not exist. But we shall shortly prove that it exists as an
affine preorder under fairly general conditions.

\begin{lemma}\label{walemma}
Any weakly affine, continuous preorder on a nonempty convex subset of a topological vector space
is affine.
\end{lemma}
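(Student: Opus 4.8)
The plan is to prove only the nontrivial implication. Weak affinity already gives $p\succsim q\Rightarrow p\oplus_\lambda r\succsim q\oplus_\lambda r$, so to get affinity I must establish the reverse \emph{cancellation law}: if $p\oplus_\lambda r\succsim q\oplus_\lambda r$ for some $\lambda\in[0,1)$, then $p\succsim q$. Writing $a=p$, $b=q$, $c=r$, I want to show that $(1-\lambda)a+\lambda c\succsim(1-\lambda)b+\lambda c$ forces $a\succsim b$. Throughout I would use that, by Lemma \ref{basiclemma} together with reflexivity, weak affinity is equivalent to $\succsim$ being a \emph{convex} subset of $S\times S$, and that continuity means this subset is \emph{closed}.

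The engine is the interplay of two operations. First, \emph{mixing}: applying weak affinity (equivalently, mixing with the reflexive pair $(v,v)$) to a known relation $X\succsim Y$ produces, for every $v\in S$ and weight $\theta\in[0,1]$, the relation $\theta X+(1-\theta)v\succsim \theta Y+(1-\theta)v$, whose two sides differ by $\theta(X-Y)$ --- the same direction as $X-Y$ but smaller in magnitude, and freely positioned according to $v$. Second, \emph{transitivity}, which chains consecutive relations sharing an endpoint: $z_k\succsim z_{k-1}$ for $k=1,\dots,n$ yields $z_n\succsim z_0$. Mixing the hypothesis $(1-\lambda)a+\lambda c\succsim(1-\lambda)b+\lambda c$ supplies, at many positions, relations whose difference is a small positive multiple of $a-b$; chaining $n$ of these along the direction $a-b$ amplifies the difference back up to $a-b$ and so delivers $a\succsim b$. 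If $S$ were the whole space this closes the argument with no appeal to continuity at all: one subdivides the segment from $b$ to $a$ into steps $z_k-z_{k-1}=\theta(a-b)$ with $\theta\le1$, realizes each step as a relation $z_k\succsim z_{k-1}$ by a parallelogram completion (choosing the unconstrained mixing point $v=\frac{1}{1-\theta}\bigl[z_{k-1}-\theta\bigl((1-\lambda)b+\lambda c\bigr)\bigr]$), and chains them.

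The difficulty --- and the sole place continuity enters --- is that these completion points are slight extrapolations of points of the segment $[b,a]$ away from $(1-\lambda)b+\lambda c$, and for a proper convex domain $S$ they may leave $S$; when $[b,a]$ lies on the boundary of $S$ no admissible mixing point exists for any fixed step. I would handle this as follows. Since affinity involves only $a,b,c$, I first restrict to the finite-dimensional slice $L=\mathrm{aff}\{a,b,c\}$, replacing $S$ by $S\cap L$; the restricted relation is still weakly affine, transitive, and closed, and now the relative interior of $S\cap L$ is nonempty. I then run the amplifying chain not along $[b,a]$ itself but along its image under a slight nudge into this relative interior (through the points $Z_k=(1-\eta)z_k+\eta c'$, where $z_k$ runs along $[b,a]$ and $c'$ is a fixed relative-interior point), so that every intermediate point $Z_k$, and hence every required mixing point, lies in $S\cap L$ once the step size is small. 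This produces genuine relations $(1-\eta)a+\eta c'\succsim(1-\eta)b+\eta c'$ for all small $\eta>0$. Finally, letting $\eta\to0$ and invoking closedness of $\succsim$ (continuity) yields $a\succsim b$.

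The main obstacle is exactly this boundary bookkeeping: keeping the amplifying chain inside $S$ while arranging its endpoints to converge to $a$ and $b$. The algebraic heart --- mixing scales differences and transitivity adds them --- is routine; the care lies in (i) the reduction to a finite-dimensional slice, which is what makes ``relative interior'' available in an arbitrary topological vector space, (ii) calibrating the inward nudge $\eta$ and the step size so that all auxiliary points remain admissible yet the chain's endpoints still tend to $(a,b)$, and (iii) the single closing appeal to continuity. I expect step (ii) to be the most delicate to write out carefully.
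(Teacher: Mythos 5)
Your plan is sound and, with the bookkeeping you describe, it does yield a complete proof; but it is a genuinely different route from the paper's. You prove the cancellation law by \emph{amplification}: mixing the hypothesis with reflexive pairs produces relations whose difference is a small positive multiple of $a-b$, and transitivity chains $n$ of them back up to $a\succsim b$. The cost is that the required mixing points $v=\frac{1}{1-\theta}\bigl[z_{k-1}-\theta\bigl((1-\lambda)b+\lambda c\bigr)\bigr]$ are extrapolations that can leave $S$, which forces your reduction to the finite-dimensional slice $\mathrm{aff}\{a,b,c\}$, the relative-interior nudge by $\eta$, and a closing limit $\eta\to 0$ using closedness of $\succsim$. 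The paper instead sets $A:=\{\beta\in[0,1):p\oplus_\beta r\succsim q\oplus_\beta r\}$ and $\alpha:=\inf A$, uses continuity (along the single segment $\beta\mapsto(p\oplus_\beta r,\,q\oplus_\beta r)$) to get $\alpha\in A$, and then applies weak affinity twice to show $\alpha\in A$ implies $\frac{\alpha}{2-\alpha}\in A$; minimality forces $\alpha\le\alpha^2$, hence $\alpha=0$. The decisive advantage of that argument is that every auxiliary point it forms is a \emph{convex} combination of $p$, $q$, $r$, so it never risks leaving $S$ and all of your boundary bookkeeping (the finite-dimensional slice, the relative interior, the calibration of $\eta$ against the step size) simply never arises; your approach, in exchange, makes the geometric mechanism (scaling differences down, adding them back up by transitivity) completely explicit and shows that continuity is only needed to handle the boundary of $S$ --- as you correctly observe, when $S$ is the whole space your argument needs no continuity at all, a fact not visible from the paper's proof. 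If you write yours up, the one step to be careful with is the uniformity in $k$ of the bound $\frac{\theta}{1-\theta}\lVert Z_{k-1}-Y\rVert$ against the distance from the nudged segment to the relative boundary of $S\cap L$; since the $Z_k$ range over a fixed compact segment in the relative interior, this works, but it must be said.
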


\begin{proof}
Let $\succsim$ be a weakly affine, continuous preorder on a nonempty convex subset $S$ of a topological
vector space and suppose
\begin{equation}\label{weakaffine}
p \oplus_\lambda r \succsim q\oplus_\lambda r
\end{equation}
for some $p, q, r \in S$ and $\lambda \in [0,1)$; we must show that $p \succsim q$. Put
$A := \{\beta \in [0,1): p \oplus_\beta r \succsim q\oplus_\beta r\}$ and let $\alpha := \inf A$.
Then there is a sequence $(\alpha_n)$ in $A$ which converges to $\alpha$,
and thus $p \oplus_{\alpha_n} r \to p \oplus_\alpha r$ and $q \oplus_{\alpha_n} r \to q \oplus_\alpha r$.
So continuity of $\succsim$ implies that $\alpha \in A$. Now set $\beta = \frac{1 - \alpha}{2 - \alpha}$
and use (\ref{weakaffine}) twice to find
\begin{equation*}
\left(p\underset{\alpha}{\oplus} r\right) \underset{\beta}{\oplus} p \succsim
\left(q\underset{\alpha}{\oplus} r\right) \underset{\beta}{\oplus} p =
\left(p\underset{\alpha}{\oplus} r\right) \underset{\beta}{\oplus} q \succsim
\left(q\underset{\alpha}{\oplus} r\right) \underset{\beta}{\oplus} q,
\end{equation*}
so that $p \oplus_{\alpha/(2 - \alpha)} r \succsim q \oplus_{\alpha/(2 - \alpha)} r$. By the definition of $\alpha$,
this shows that $\alpha \leq \frac{\alpha}{2 - \alpha}$, that is, $\alpha \leq \alpha^2$. Since $\alpha \in [0,1)$, we
obtain $\alpha = 0$, which means that $p \succsim q$, as desired.
\end{proof}

The continuity hypothesis in Lemma \ref{walemma} is very mild because any topology compatible with the
vector space structure can be used; in particular, we could take the coarsest possible compatible topology, namely
the weak topology determined by the family of all linear functionals. All we need is that $\alpha_n \to \alpha$
and $p + \alpha_n(r - p) \succsim q + \alpha_n(r - q)$ for all $n$ imply $p + \alpha(r - p) \succsim q + \alpha(r - q)$.

\begin{prop}\label{affprop}
Let $X$ be a separable metric space and let $\succsim$ be a $W_1$-continuous preorder on
$\Delta_1(X)$. Then $\succsim_{\rm aff}$ exists as an affine preorder on $\Delta_1(X)$, and it is
characterized as:
\begin{equation*}
p \succsim_{\rm aff} q \qquad\mbox{iff}\qquad
p \underset{\lambda}{\oplus} r \succsim q \underset{\lambda}{\oplus} r\quad\mbox{for every }
r \in \Delta_1(X)\mbox{ and }\lambda \in [0,1].
\end{equation*}
\end{prop}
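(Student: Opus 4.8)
The plan is to introduce the relation
\[
p \unrhd q \quad\mbox{iff}\quad p \underset{\lambda}{\oplus} r \succsim q \underset{\lambda}{\oplus} r \mbox{ for every } r \in \Delta_1(X) \mbox{ and } \lambda \in [0,1],
\]
and to prove the proposition by showing that $\unrhd$ is an affine preorder which is simultaneously the largest affine subrelation of $\succsim$. Two of the required facts are immediate. First, $\unrhd$ is reflexive and transitive because $\succsim$ is (reflexivity uses reflexivity of $\succsim$ at each $(r,\lambda)$; transitivity chains the two defining inequalities at each $(r,\lambda)$), and taking $\lambda = 0$ shows $\unrhd\,\subseteq\,\succsim$. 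Second, $\unrhd$ dominates every affine subrelation: if $\succsim'$ is affine with $\succsim'\,\subseteq\,\succsim$ and $p \succsim' q$, then affinity of $\succsim'$ gives $p \oplus_\lambda r \succsim' q \oplus_\lambda r$, hence $p \oplus_\lambda r \succsim q \oplus_\lambda r$, for all $r$ and $\lambda$; thus $p \unrhd q$. Consequently, once $\unrhd$ is shown to be affine, it is automatically $\succsim_{\rm aff}$, and existence and the stated characterization follow at once.

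The substance of the argument is therefore to verify that $\unrhd$ is affine, and the plan is to deduce this from Lemma \ref{walemma} by checking that $\unrhd$ is weakly affine and $W_1$-continuous. For continuity, I would write $\unrhd = \bigcap_{r,\lambda} \Phi_{r,\lambda}^{-1}(\succsim)$, where $\Phi_{r,\lambda}(p,q) := (p \oplus_\lambda r, q \oplus_\lambda r)$; since mixing with a fixed measure is $W_1$-nonexpansive (by convexity of $W_1$), each $\Phi_{r,\lambda}$ is $W_1$-continuous, so each preimage of the closed set $\succsim$ is closed and hence so is their intersection. To invoke Lemma \ref{walemma} I would record that $\Delta_1(X)$ is, via $p \mapsto p - \delta_e$, an affine copy of a convex subset of the topological vector space ${\rm KR}(X)$ carrying $W_1$ to $\|\cdot\|_{\rm KR}$ (Proposition \ref{krprop}), so that $W_1$-continuity is exactly continuity in the sense of that lemma. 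For weak affinity, given $p \unrhd q$ and $s \in \Delta_1(X)$, $\mu \in [0,1)$, I must show $(p \oplus_\mu s)\oplus_\lambda r \succsim (q \oplus_\mu s)\oplus_\lambda r$ for all $r$ and $\lambda$; the key is the reparametrization identity
\[
\left(p \underset{\mu}{\oplus} s\right)\underset{\lambda}{\oplus} r = p \underset{\lambda'}{\oplus} r', \qquad \lambda' := \lambda + \mu - \lambda\mu,\quad r' := \frac{(1-\lambda)\mu}{\lambda'}\, s + \frac{\lambda}{\lambda'}\, r,
\]
together with the identical identity for $q$, after which $p \unrhd q$ applied at the pair $(r',\lambda')$ yields the desired inequality.

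The main obstacle I anticipate is the bookkeeping in this last step: one must confirm that $\lambda' = 1 - (1-\lambda)(1-\mu) \in [0,1]$ and that $r'$ is a genuine convex combination of $s$ and $r$ (so $r' \in \Delta_1(X)$), in order that the defining quantifier of $\unrhd$ legitimately applies; and one must dispose of the degenerate case $\lambda' = 0$, which forces $\lambda = \mu = 0$ and reduces the claim to $p \succsim q$, already known from $\unrhd\,\subseteq\,\succsim$. The continuity argument and the maximality argument are by contrast routine, so the whole proof hinges on correctly executing the weak-affinity reparametrization and then citing Lemma \ref{walemma}.
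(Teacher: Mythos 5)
Your proposal is correct and follows essentially the same route as the paper: define $\unrhd$ by the displayed condition, observe it is a preorder containing every affine subrelation of $\succsim$, verify weak affinity via the same mixture-reparametrization identity (the paper's $\beta = \alpha + (1-\alpha)\lambda$, $s' = r \oplus_{\alpha/\beta} s$ is your $(\lambda', r')$ in different notation), verify $W_1$-continuity, and conclude affinity from Lemma \ref{walemma}. Your explicit remarks on embedding $\Delta_1(X)$ into ${\rm KR}(X)$ to apply that lemma and on the degenerate case $\lambda' = 0$ are minor points the paper leaves implicit.
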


\begin{proof}
Define a binary relation $\unrhd$ on $\Delta_1(X)$ by setting $p\unrhd q$ iff
$p \oplus_\lambda r \succsim q \oplus_\lambda r$ for every $r \in \Delta_1(X)$ and $\lambda \in [0,1]$.
Obviously, $\unrhd$ is a preorder on $\Delta_1(X)$ with $\unrhd \subseteq\, \succsim$. It is also plain
that if $\unrhd'$ is an affine subrelation of $\succsim$, then $\unrhd' \subseteq \unrhd$. It remains to
show that $\unrhd$ is affine. To this end, take any $p, q, r \in \Delta_1(X)$ with $p \unrhd q$ and any
$\lambda \in [0,1]$. We will first prove that $p\oplus_\lambda r \unrhd q \oplus_\lambda r$, that is,
\begin{equation}\label{affeq}
\left(p \underset{\lambda}{\oplus} r\right) \underset{\alpha}{\oplus} s \succsim
\left(q \underset{\lambda}{\oplus} r\right) \underset{\alpha}{\oplus} s
\end{equation}
for every $s \in \Delta_1(X)$ and $\alpha \in [0,1]$. Fix $s$ and $\alpha$. Now put $\beta =
\alpha + (1 - \alpha)\lambda$ and $s' := r \oplus_{\alpha/\beta} s$, and note that
$(p \oplus_\lambda r) \oplus_\alpha s = p \oplus_\beta s'$ and $(q \oplus_\lambda r) \oplus_\alpha s
= q \oplus_\beta s'$. Therefore, as $p \unrhd q$, it follows from the definition of
$\unrhd$ that (\ref{affeq}) holds. Conclusion: $\unrhd$ is weakly affine.

We next claim that $\unrhd$ is $W_1$-continuous. Indeed, let $(p_n)$ and $(q_n)$ be two sequences
in $\Delta_1(X)$ such that $p_n \unrhd q_n$ for all $n \in \mathbb{N}$, and assume that
$W_1(p_n,p) \to 0$ and $W_1(q_n,q) \to 0$ for some $p,q \in \Delta_1(X)$. Take any
$r \in \Delta_1(X)$ and any $\lambda \in [0,1]$, and note that $W_1(p_n \oplus_\lambda r,
p \oplus_\lambda r) = (1 - \lambda)W_1(p_n,p) \to 0$, and similarly $W_1(q_n \oplus_\lambda r,
q \oplus_\lambda r) \to 0$. Since $p_n \oplus_\lambda r \succsim
q_n \oplus_\lambda r$ for each $n$ (by the definition of $\unrhd$), and $\succsim$ is
$W_1$-continuous, we obtain $p \oplus_\lambda r \succsim q \oplus_\lambda r$. Since
$r$ and $\lambda$ were arbitrary, we conclude that $p \unrhd q$. Thus $\unrhd$ is
$W_1$-continuous. We have proven that $\unrhd$ is $W_1$-continuous and weakly affine;
by Lemma \ref{walemma}, therefore, it is affine.
\end{proof}

\subsection{Functional representation of the affine core}
We can now prove an expected multi-utility representation for the affine core
of any preorder on $\Delta_1(X)$ that is Lipschitz in the sense of Levin.

\begin{theo}
Let $X$ be a separable metric space and $\succsim$ a preorder on $\Delta_1(X)$ which is Lipschitz
in the sense of Levin. Then there is a family $\mathcal{U}$ of Lipschitz functions on $\Delta_1(X)$
such that
\begin{equation*}
p \succsim_{\rm aff} q \qquad\mbox{iff}\qquad
\int_X u\, {\rm d}p \geq \int_X u\, {\rm d}q\quad\mbox{for every }u \in \mathcal{U}.
\end{equation*}
\end{theo}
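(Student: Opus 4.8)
The plan is to show that the affine core $\succsim_{\rm aff}$ is itself an affine preorder that is Lipschitz in our sense (Definition~\ref{lipdef}), and then read off the representation from Theorem~\ref{mainthm}. Since a preorder that is Lipschitz in the sense of Levin is (by the remarks opening Section~4.7) a closed subset of $\Delta_1(X) \times \Delta_1(X)$ for the $W_1$ metric, it is $W_1$-continuous; hence Proposition~\ref{affprop} applies and $\succsim_{\rm aff}$ exists as an affine preorder, characterized by the property that $p \succsim_{\rm aff} q$ iff $p \oplus_\lambda r \succsim q \oplus_\lambda r$ for all $r \in \Delta_1(X)$ and $\lambda \in [0,1]$. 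Following the structure of the proof of Theorem~\ref{mainthm}, it then suffices to produce, for each pair $p,q \in \Delta_1(X)$ with $q \succsim_{\rm aff} p$ failing, a Lipschitz function $u = u_{p,q}$ on $X$ with $\int u\, {\rm d}p > \int u\, {\rm d}q$ together with $\int u\, {\rm d}p' \geq \int u\, {\rm d}q'$ whenever $p' \succsim_{\rm aff} q'$; the family $\mathcal{U} = \{u_{p,q} : q \succsim_{\rm aff} p \text{ fails}\}$ then represents $\succsim_{\rm aff}$ exactly as in that theorem.

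To construct $u_{p,q}$ I would first invoke the mixture characterization of Proposition~\ref{affprop} to select $r \in \Delta_1(X)$ and $\lambda \in [0,1)$ (necessarily $\lambda < 1$, since $p \oplus_1 r = r = q \oplus_1 r$ would force the comparison to hold by reflexivity) for which $q \oplus_\lambda r \succsim p \oplus_\lambda r$ fails. Levin's condition, applied to $\succsim$ at the pair $(p \oplus_\lambda r,\, q \oplus_\lambda r)$, then furnishes a $1$-Lipschitz $\succsim$-increasing function $f$ on $\Delta_1(X)$ with $f(p \oplus_\lambda r) > f(q \oplus_\lambda r)$; because $\succsim_{\rm aff} \subseteq\, \succsim$, this $f$ is automatically $\succsim_{\rm aff}$-increasing. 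From here the construction mirrors the proof of Theorem~\ref{levinthm}, run relative to $\succsim_{\rm aff}$: for each finite $F \subseteq \Delta_1(X)$ containing $p$, $q$, $r$, and $\delta_e$, the restriction of $\succsim_{\rm aff}$ to the finite-dimensional convex hull $S_F$ is affine and $f|_{S_F}$ is $1$-Lipschitz and $\succsim_{\rm aff}$-increasing, so Lemma~\ref{levin2} yields an affine, $1$-Lipschitz, $\succsim_{\rm aff}$-increasing $h_F$ on $S_F$ agreeing with $f$ at $p \oplus_\lambda r$ and at $q \oplus_\lambda r$. Encoding $h_F$ as a norm-$\leq 1$ functional on the finite-dimensional subspace of ${\rm KR}(X)$ spanned by differences of points of $S_F$, extending by Hahn-Banach, representing the extension by a $1$-Lipschitz $u_F \in {\rm Lip}_0(X)$ via Theorem~\ref{duality}, and passing to a weak*-convergent subnet $u_F \to u$ completes the construction.

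To verify the two properties, write $U(\cdot) := \int u\, {\rm d}(\cdot)$. For any $p', q' \in \Delta_1(X)$ the difference $p' - q'$ lies in ${\rm KR}(X)$, so weak* convergence gives $U(p') - U(q') = \lim_F \int u_F\, {\rm d}(p' - q') = \lim_F \big(h_F(p') - h_F(q')\big)$; since each $h_F$ is $\succsim_{\rm aff}$-increasing, $p' \succsim_{\rm aff} q'$ makes these differences nonnegative for all large $F$, yielding $U(p') \geq U(q')$ and hence the second property. For the first, the agreement of $h_F$ with $f$ at $p \oplus_\lambda r$ and $q \oplus_\lambda r$ makes $\int u_F\, {\rm d}\big((p \oplus_\lambda r) - (q \oplus_\lambda r)\big) = f(p \oplus_\lambda r) - f(q \oplus_\lambda r)$ a fixed positive constant, independent of $F$, so it survives the limit: $U(p \oplus_\lambda r) - U(q \oplus_\lambda r) = f(p \oplus_\lambda r) - f(q \oplus_\lambda r) > 0$. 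As $U$ is affine in its argument, $U(p \oplus_\lambda r) - U(q \oplus_\lambda r) = (1 - \lambda)\big(U(p) - U(q)\big)$, and $\lambda < 1$ then gives $U(p) > U(q)$, as required.

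The main obstacle is the gap between $\succsim$ and $\succsim_{\rm aff}$: Levin's hypothesis delivers a separating function only when the $\succsim$-comparison itself fails, whereas $q \succsim_{\rm aff} p$ may fail while $q \succsim p$ holds. The mixture characterization of Proposition~\ref{affprop} resolves this by converting the failure of the core comparison into the failure of an ordinary $\succsim$-comparison between the mixtures $p \oplus_\lambda r$ and $q \oplus_\lambda r$, the only place Levin's condition can be applied. A secondary difficulty is that Levin's function $f$ need not be affine, so it does not by itself separate $p$ from $q$; this is exactly what the finite-dimensional Rademacher-based affinization of Theorem~\ref{levinthm} overcomes, and the one point requiring care is that this affinization must be performed with respect to $\succsim_{\rm aff}$ rather than $\succsim$ (legitimate since $\succsim_{\rm aff}$ is affine and $f$ is $\succsim_{\rm aff}$-increasing), after which the linearity of the limiting von Neumann-Morgenstern functional transfers the strict inequality from the mixtures back to $p$ and $q$.
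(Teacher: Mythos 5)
Your argument is correct and uses the same pillars as the paper's proof: Proposition \ref{affprop} to realize $\succsim_{\rm aff}$ as an affine preorder characterized by mixtures, Levin's condition applied at the pair $(p\oplus_\lambda r,\, q\oplus_\lambda r)$ where the core comparison is witnessed to fail, and the affinization-plus-duality machinery behind Theorem \ref{levinthm}. The one structural difference lies in how the mixture with $r$ is discharged. The paper first converts the Levin function $U$ for $\succsim$ into a Levin function for $\succsim_{\rm aff}$ that separates $p$ from $q$ directly, by setting $V(w) := U(w \oplus_\lambda r)$; this $V$ is $1$-Lipschitz, satisfies $V(p) > V(q)$, and is $\succsim_{\rm aff}$-increasing by Proposition \ref{affprop}, so the paper can record the clean intermediate claim that $\succsim_{\rm aff}$ is itself Lipschitz in the sense of Levin and then invoke Theorem \ref{levinthm} and Theorem \ref{mainthm} as black boxes. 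You instead carry the uncomposed $f$ through an inlined rerun of the proof of Theorem \ref{levinthm} (correctly performed relative to the affine preorder $\succsim_{\rm aff}$ rather than $\succsim$, which is the one point requiring care) and recover the strict separation of $p$ and $q$ only at the end, from the affinity of the limiting von Neumann--Morgenstern functional together with the observation that $\lambda < 1$. Both routes work; the paper's is more modular and isolates the reusable fact that $\succsim_{\rm aff}$ inherits Levin's property from $\succsim$, while yours trades that modularity for a repetition of the Hahn--Banach and weak* limit construction.
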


\begin{proof}
We claim that $\succsim_{\rm aff}$ is Lipschitz in the sense of Levin. To prove this, take any
$p, q \in \Delta_1(X)$ such that $q \succsim_{\rm aff} p$ fails. Then, by Proposition \ref{affprop},
$q \oplus_\lambda r \succsim p \oplus_\lambda r$ fails for some $r \in \Delta_1(X)$ and
$\lambda \in [0,1]$. As $\succsim$ is Lipschitz in the sense of Levin, then, there is a
$\succsim$-increasing 1-Lipschitz function $U$ on $\Delta_1(X)$ with $U(p\oplus_\lambda r) >
U(q \oplus_\lambda r)$. Now define $V: \Delta_1(X) \to \mathbb{R}$ by $V(w) := U(w \oplus_\lambda r)$.
Clearly, $V$ is 1-Lipschitz and $V(p) > V(q)$. Moreover, if $w \succsim_{\rm aff} w'$, then by Proposition
\ref{affprop}, $w \oplus_\lambda r \succsim w' \oplus_\lambda r$, whence $V(w) = U(w \oplus_\lambda r)
\geq U(w' \oplus_\lambda r) = V(w')$ because $U$ is $\succsim$-increasing. Conclusion: $V$ is a
$\succsim_{\rm aff}$-increasing 1-Lipschitz function on $\Delta_1(X)$ with $V(p) > V(q)$. We have shown
that $\succsim_{\rm aff}$ is Lipschitz in the sense of Levin. As $\succsim_{\rm aff}$ is an affine preorder
(Proposition \ref{affprop}), therefore, by Theorem \ref{levinthm}, $\succsim_{\rm aff}$ is Lipschitz.
The present result thus obtains by applying Theorem \ref{mainthm}.
\end{proof}

\section{Application: A single-prior expected multi-utility theorem}

\subsection{The Anscombe-Aumann framework}
The analysis of preferences over risky prospects models uncertainty about possible outcomes in terms
of objective probabilities (as in games of chance). However, in many situations of interest (such as betting
on teams, or choosing a financial portfolio), the relevant probabilities are subjective, because two rational
individuals may disagree on the likelihood of various states of nature, and hence the final outcomes. This
necessitates specifying a state space $\Omega$, and then studying preferences over state-dependent
prospects, often called {\it Savagean acts}, which are $X$-valued maps on $\Omega$. (Choosing an act
$f$ returns the outcome $f(\omega)$ if the state that is realized after the act is chosen is $\omega \in \Omega$.)
This leads to the so-called Savagean theory of decision-making under uncertainty. The highly influential work
of Anscombe and Aumann \cite{aa} showed that one would better tie this model to the classical von
Neumann-Morgenstern theory by instead considering $\Delta(X)$-valued acts on $\Omega$. The resulting
model, which is the workhorse of modern decision theory under uncertainty, is described as follows.

We designate a nonempty finite set $\Omega$ as the set of states of nature, and a separable metric
space $X$ as a prize space. An {\it Anscombe-Aumann act}, or simply an {\it act}, is a function from $\Omega$
into $\Delta_1(X)$; such a map models a state-contingent plan whose returns are lotteries (with objectively
specified probabilities). We endow the set of all acts $\Delta_1(X)^\Omega$ with the
product metric. Write $W_1(f,g)$ for $\sum_{\omega \in \Omega} W_1(f(\omega), g(\omega))$, for
any acts $f$ and $g$, and for any $\lambda \in [0,1]$ and any acts $f$ and $g$ define
$f \oplus_\lambda g$ to be the act $\omega \mapsto f(\omega) \oplus_\lambda g(\omega)$.

A preorder on $\Delta_1(X)^\Omega$ is interpreted as the preference relation of a rational
individual who does not know the state of nature before choosing an action. We say that such a preorder
$\succsim$ is {\it affine} provided that $f \succsim g$ iff $f \oplus_\lambda h \succsim
g \oplus_\lambda h$, for any $h \in \Delta_1(X)^\Omega$ and $\lambda \in [0,1)$, and
{\it Lipschitz} if for every $f,g \in \Delta_1(X)^\Omega$ such that $g \succsim f$ fails, there
exists $K > 0$ such that $g \oplus_\lambda g' \succsim f \oplus_\lambda f'$ fails for every
$f', g' \in \Delta_1(X)^\Omega$ and every
$\lambda$ in the interval $\left[0, \frac{K}{K + W_1(f',g')}\right)$. The following is a straightforward
generalization of Theorem \ref{mainthm}.

\begin{theo}\label{spthm}
Let $\Omega$ be a finite set and let $X$ be a pointed separable metric space. Then a preorder $\succsim$
on $\Delta_1(X)^\Omega$ is affine and Lipschitz if and only if there exists a family
$\mathbb{U} \subseteq {\rm Lip}_0(X)^\Omega$ such that
\begin{equation}\label{speqn}
f \succsim g\qquad\mbox{iff}\qquad \sum_{\omega \in \Omega} \int_X u_\omega\, {\rm d}f(\omega)
\geq \sum_{\omega \in \Omega} \int_X u_\omega\, {\rm d}g(\omega)\quad\mbox{for every }
u \in \mathbb{U}.
\end{equation}
\end{theo}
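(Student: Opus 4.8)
The plan is to reprise the proof of Theorem~\ref{mainthm}, working in the direct sum $\bigoplus_{\omega\in\Omega}{\rm KR}(X)$ in place of ${\rm KR}(X)$ and invoking the generalized duality $\bigl(\bigoplus_{\omega\in\Omega}{\rm KR}(X)\bigr)^*\cong\bigoplus_{\omega\in\Omega}{\rm Lip}_0(X)={\rm Lip}_0(X)^\Omega$ recorded at the end of Section~2.3. The point that makes this work is a clean compatibility: the difference of two acts $f,g\in\Delta_1(X)^\Omega$, identified with the tuple $(f(\omega)-g(\omega))_{\omega\in\Omega}$, lies in $\bigoplus_{\omega\in\Omega}{\rm KR}(X)$, and the metric we have chosen on acts matches the direct-sum norm on the nose, since $\sum_{\omega}\|f(\omega)-g(\omega)\|_{\rm KR}=\sum_\omega W_1(f(\omega),g(\omega))=W_1(f,g)$.

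For the reverse implication I would follow Proposition~\ref{liputilities} essentially verbatim. Given $u=(u_\omega)\in\mathbb{U}$, the map $U(f):=\sum_\omega\int u_\omega\,{\rm d}f(\omega)$ satisfies $|U(f)-U(g)|\le(\max_\omega L(u_\omega))\sum_\omega W_1(f(\omega),g(\omega))$, so after rescaling $u$ so that $\max_\omega L(u_\omega)=1$ (possible because, when $U$ separates two acts, $U$ is nonconstant and hence some $u_\omega$ is nonconstant), each $u_\omega$ is $1$-Lipschitz and the Kantorovich-Rubinstein bound gives $U(g')-U(f')\le W_1(f',g')$ for all acts $f',g'$. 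Setting $K:=U(f)-U(g)$ when $g\succsim f$ fails, the same chain of inequalities as in Proposition~\ref{liputilities} then verifies the Lipschitz condition; affinity is immediate.

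For the forward implication I would form the cone $C:=\{\alpha(f-g):\alpha\ge 0\text{ and }f\succsim g\}\subseteq\bigoplus_{\omega\in\Omega}{\rm KR}(X)$. The second part of Lemma~\ref{basiclemma}, whose proof uses only the convex mixing structure and so applies unchanged to the act space $\Delta_1(X)^\Omega$, shows $C$ is convex, and the rearrangement argument in Lemma~\ref{mainlemma} gives $f\succsim g$ iff $f-g\in C$. The heart of the matter, and the step I expect to require the only real care, is the closedness of $C$; but this transfers directly from Lemma~\ref{mainlemma}. Writing a point $\sigma\notin C$ as $\alpha(g-f)$ and an arbitrary element as $\mu=\beta(g'-f')$, the Lipschitz property for acts together with the identity $\|\mu\|=\beta W_1(f',g')$ drives exactly the same scaling computation, showing that the open ball of radius $\alpha K$ about $\sigma$ avoids $C$. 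The one genuinely new ingredient needed here is the direct-sum analogue of Proposition~\ref{krprop}: every element of $\bigoplus_{\omega\in\Omega}{\rm KR}(X)$ has the form $\alpha(f-g)$ for a single scalar $\alpha\ge 0$ and a pair of acts, with $\|\alpha(f-g)\|=\alpha W_1(f,g)$. I would verify this by dividing the coordinatewise Jordan decompositions by the largest coordinate half-mass and padding with $\delta_e$, turning each normalized coordinate into a genuine difference of probability measures in $\Delta_1(X)$.

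Once $C$ is known to be a closed convex cone, the proof concludes precisely as in Theorem~\ref{mainthm}: whenever $g\succsim f$ fails we have $g-f\notin C$, so the separating hyperplane theorem produces a bounded linear functional on $\bigoplus_{\omega\in\Omega}{\rm KR}(X)$ that is nonnegative on $C$ and strictly negative at $g-f$; the generalized duality represents this functional as a tuple $(u_\omega)\in{\rm Lip}_0(X)^\Omega$, and collecting one such tuple for each failing pair yields a family $\mathbb{U}$ satisfying~(\ref{speqn}).
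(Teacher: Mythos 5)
Your proposal is correct and follows exactly the route the paper intends: it reprises the proof of Theorem~\ref{mainthm} in $\bigoplus_{\omega\in\Omega}{\rm KR}(X)$ using the direct-sum duality from the end of Section~2, which is precisely what the authors describe before omitting the details. You have also correctly isolated and supplied the one genuinely new ingredient, the direct-sum analogue of Proposition~\ref{krprop} (common scalar $\alpha$ via padding with $\delta_e$, with $\|\alpha(f-g)\|=\alpha W_1(f,g)$), so nothing is missing.
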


Here we write $u_\omega \in {\rm Lip}_0(X)$ for the $\omega$th coordinate of $u \in {\rm Lip}_0(X)^\Omega$.
This result is proved in exactly the same way we proved Theorem 4.4, but this time making use of the
duality $\left(\bigoplus_{\omega \in \Omega} {\rm KR}(X)\right)^* \cong \bigoplus_{\omega \in \Omega}
{\rm Lip}_0(X)$ (see the end of Section 2.2). We omit the details.

\subsection{Local probabilistic sophistication}
For any $\alpha \in \Delta(\Omega)$ and any act $f$, we write $f^\alpha$ for the constant act that maps
any state $\nu \in \Omega$ to the probability measure $\sum_{\omega \in \Omega} \alpha\{\omega\}
f(\omega)$. Adapting the main definition of Machina and Schmeidler \cite{ms}, we say that a preorder
$\succsim$ on $\Delta_1(X)^\Omega$ is {\it probabilistically sophisticated} if there exists an
$\alpha \in \Delta(\Omega)$ such that $f \sim f^\alpha$ for every $f \in \Delta_1(X)^\Omega$.
In words, probabilistic sophistication of $\succsim$ means that the individual has a uniform method
of reducing any act to a lottery, thereby replacing the underlying uncertainty about the states with
risk. (The preferences of such individuals over acts are recovered completely from their preferences
over lotteries.)

Ok, Ortoleva, and Riella \cite{oor} have introduced a much weaker version of probabilistic sophistication
where again one reduces an act to a constant act, but this time not uniformly. Put precisely, we say that
a preorder $\succsim$ on $\Delta_1(X)^\Omega$ is {\it locally probabilistically sophisticated} if for
every $f \in \Delta_1(X)^\Omega$ there exists an $\alpha \in \Delta(\Omega)$ such that
$f \sim f^\alpha$. (This property was called the ``reduction axiom'' in \cite{oor}.) Our final result in
this paper identifies how Theorem \ref{spthm} adjusts for locally probabilistically sophisticated
preferences. In this case, we not only find cardinal multi-utilities that represent one's
preferences over lotteries, but also pin down one's beliefs about the states of nature uniquely,
provided that $\succsim$ is {\it nontrivial} in the sense that it does not render all acts indifferent,
i.e., $\succsim\, \neq \Delta_1(X)^\Omega\times\Delta_1(X)^\Omega$. Such results are
often called ``single prior expected multi-utility theorems'' in the literature on decision making under
uncertainty (where $\mu$ in the representation below is interpreted as the prior beliefs of the individual
about the likelihoods of the states of nature). In particular, \cite{oor} proves an analogous result to
Theorem \ref{finalthm} under the assumptions that $X$ is compact and $\succsim$ is continuous, and
finds a $\mathcal{U}$ that consists of continuous functions. As in Theorem \ref{mainthm}, our contribution
here is to show that replacing the continuity requirement on $\succsim$ by the Lipschitz property allows us
to relax the compactness hypothesis to separability and obtain Lipschitz Bernoulli utilities.

\begin{theo}\label{finalthm}
Let $\Omega$ be a finite set, let $X$ be a pointed separable metric space, and let $\succsim$ be a nontrivial
preorder on $\Delta_1(X)^\Omega$. Then $\succsim$ is locally probabilistically sophisticated,
Lipschitz, and affine if and only if there exists a unique $\mu \in \Delta(\Omega)$ and a family
$\mathcal{U} \subseteq {\rm Lip}_0(X)$ such that
\begin{equation*}
f \succsim g \qquad\mbox{iff}\qquad
\sum_{\omega \in \Omega} \mu\{\omega\} \int_X u\, {\rm d}f(\omega) \geq
\sum_{\omega \in \Omega} \mu\{\omega\} \int_X u\, {\rm d}g(\omega)
\quad\mbox{for every }u \in \mathcal{U}.
\end{equation*}
\end{theo}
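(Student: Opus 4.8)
The plan is to reduce everything to the state-dependent representation of Theorem \ref{spthm} and then show that local probabilistic sophistication forces the state-dependent utilities to factor as a product of a prior and a state-independent utility. The reverse direction is routine: given $\mu$ and $\mathcal{U}$, set $\mathbb{U} := \{(\mu\{\omega\}u)_{\omega\in\Omega} : u\in\mathcal{U}\}\subseteq{\rm Lip}_0(X)^\Omega$, so that (\ref{speqn}) holds and Theorem \ref{spthm} gives that $\succsim$ is affine and Lipschitz. For local probabilistic sophistication I would check directly that $f\sim f^\mu$ for every act $f$: using $\sum_\omega\mu\{\omega\}=1$, both sides of the representing inequality agree for the pair $f,f^\mu$, so $\succsim$ is in fact \emph{uniformly} probabilistically sophisticated with prior $\mu$.

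For the forward direction I would first apply Theorem \ref{spthm} to obtain a representing family $\mathbb{U}\subseteq{\rm Lip}_0(X)^\Omega$, abbreviating $\bar u:=\sum_{\omega}u_\omega\in{\rm Lip}_0(X)$ for $u=(u_\omega)_\omega\in\mathbb{U}$. The target is to produce a single $\mu\in\Delta(\Omega)$ with $u_\omega=\mu\{\omega\}\bar u$ for every $u\in\mathbb{U}$ and every $\omega$; the representation then holds with $\mathcal{U}:=\{\bar u:u\in\mathbb{U}\}$, since under this factorization $\sum_\omega\int u_\omega\,{\rm d}f(\omega)=\sum_\omega\mu\{\omega\}\int\bar u\,{\rm d}f(\omega)$. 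To extract the factorization, fix a state $\omega_0$ and lotteries $r,s\in\Delta_1(X)$, and consider the act $f$ equal to $r$ in state $\omega_0$ and $s$ elsewhere. Local probabilistic sophistication supplies an $\alpha\in\Delta(\Omega)$ with $f\sim f^\alpha$, and $f^\alpha$ is the constant act with value $\alpha\{\omega_0\}r+(1-\alpha\{\omega_0\})s$. Since every $u\in\mathbb{U}$ respects this single indifference, a short computation gives, with $a:=\alpha\{\omega_0\}\in[0,1]$ depending only on $(\omega_0,r,s)$ and \emph{not} on $u$,
\[
\int_X u_{\omega_0}\,{\rm d}(r-s)=a\int_X\bar u\,{\rm d}(r-s)\qquad\text{for every }u\in\mathbb{U}.
\]

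Now I would fix $u$ and $\omega_0$ and regard $L_1:=\int_X u_{\omega_0}\,{\rm d}(\cdot)$ and $L_2:=\int_X\bar u\,{\rm d}(\cdot)$ as bounded linear functionals on ${\rm KR}(X)$. Every element of ${\rm KR}(X)$ is $\beta(r-s)$ with $\beta\geq0$ and $r,s\in\Delta_1(X)$ (Proposition \ref{krprop}), so the displayed identity holds, with a coefficient bounded in $[0,1]$, on all of ${\rm KR}(X)$; in particular $L_2(\sigma)=0$ forces $L_1(\sigma)=0$. Hence $\ker L_2\subseteq\ker L_1$, which yields $L_1=c\,L_2$ for a scalar $c=c(u,\omega_0)\in[0,1]$, i.e.\ $u_{\omega_0}=c(u,\omega_0)\bar u$ (and $\bar u=0$ forces $u_{\omega_0}=0$). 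Because the coefficient $a$ above is independent of $u$, and because a real vector space is not the union of two proper subspaces, for any two nonzero $\bar u,\bar u'$ some common direction $r-s$ has both $\int\bar u\,{\rm d}(r-s)$ and $\int\bar u'\,{\rm d}(r-s)$ nonzero; evaluating $c$ there (where the pair is inequivalent, so $a$ is unambiguous) shows $c(u,\omega_0)=:\mu\{\omega_0\}$ is independent of $u$. Summing $u_{\omega_0}=\mu\{\omega_0\}\bar u$ over $\omega_0$ gives $\sum_\omega\mu\{\omega\}=1$ (via any $u$ with $\bar u\neq0$, which exists by nontriviality), with $\mu\{\omega\}\in[0,1]$, so $\mu\in\Delta(\Omega)$. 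Uniqueness of $\mu$ is then immediate, since the coefficient $a(\omega_0,r,s)$ is intrinsic to $\succsim$ (it is the unambiguous $\omega_0$-mass of any reducing prior of $f$ whenever $r,s$ are inequivalent as constant acts) and the argument identifies $\mu\{\omega_0\}$ with this $a(\omega_0,r,s)$; any other representing prior must equal the same quantity.

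The main obstacle is precisely this factorization step: promoting the per-act, per-direction proportionality constants $a(\omega_0,r,s)$ to one prior $\mu$ valid simultaneously for all $u\in\mathbb{U}$ and all directions. The two features that make it go through are (i) that the constant $a$ attached to a given act is independent of which representing utility $u$ we test against—exactly because $f\sim f^\alpha$ is a single indifference seen by the whole family—and (ii) the boundedness $a\in[0,1]$, which is what lets the proportionality survive passage to $\ker L_2$ and thereby globalize from the spanning set of differences $\{r-s\}$ to all of ${\rm KR}(X)$. The only case needing separate attention is the degenerate $\bar u\equiv 0$, which nontriviality rules out.
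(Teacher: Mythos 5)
Your proof is correct, and while it shares the paper's overall strategy (first invoke Theorem \ref{spthm}, then use local probabilistic sophistication on two-valued acts to force the state-dependent utilities to factor through a single prior), the factorization step is executed by a genuinely different argument. The paper fixes, for each $u \in \mathbb{U}$, a nonzero coordinate $u_* = u_{\omega_u}$ and proves by contradiction that every other coordinate is a nonnegative multiple of it: assuming otherwise, it produces $p,q$ on which $u_*$ and $u_\nu$ ``disagree in sign,'' builds the act equal to $p$ on the sign set $A$ and $q$ on its complement, and shows no reducing prior can exist by a term-by-term comparison; it then needs a second, separate argument (with Dirac measures $\delta_x, \delta_y$ and a small case analysis for choosing $x,y$) to show the resulting priors $\mu_u$ agree across different $u$. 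You instead normalize by $\bar u = \sum_\omega u_\omega$ and extract directly from the single-state acts the linear identity $\int u_{\omega_0}\,{\rm d}(r-s) = \alpha\{\omega_0\}\int \bar u\,{\rm d}(r-s)$, after which a kernel-inclusion argument for functionals on ${\rm KR}(X)$ gives $u_{\omega_0} = c\,\bar u$ with $c = \alpha\{\omega_0\} \in [0,1]$; because the reducing prior $\alpha$ of a given act is shared by the whole family $\mathbb{U}$, the cross-$u$ consistency of the coefficients comes essentially for free (modulo the ``union of two proper subspaces'' observation), and the degenerate case $\bar u = 0$ is correctly seen to force $u = 0$. Your route is arguably cleaner: it replaces the paper's contradiction argument and its two-stage structure with one direct computation plus elementary linear algebra, at the cost of relying a bit more explicitly on the identification ${\rm KR}(X)^* \cong {\rm Lip}_0(X)$ to pass from equality of functionals to equality of functions. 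Your treatment of uniqueness (reading $\mu\{\omega_0\}$ off as the unambiguous reducing weight of a suitable single-state act built from an inequivalent pair of lotteries) matches the paper's.
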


\begin{proof}
The reverse implication is easy. (Indeed, in this direction one need not assume $\mu$ is unique, and one
can infer that $\succsim$ is, in fact, probabilistically sophisticated.)
For the forward implication, let $\succsim$ be a locally probabilistically
sophisticated Lipschitz affine preorder on $\Delta_1(X)^\Omega$. By Theorem \ref{spthm}, there
exists a family $\mathbb{U} \subseteq {\rm Lip}_0(X)^\Omega$ such that (\ref{speqn}) holds. We can
assume that every $u \in \mathbb{U}$ is nonzero on some coordinate.

Take an arbitrary $u \in \mathbb{U}$ and fix a coordinate $\omega_u \in \Omega$ such that
$u_{\omega_u} \neq 0$. To simplify notation, define $u_* = u_{\omega_u} \in {\rm Lip}_0(X)$. We claim
that each coordinate $u_\omega$ of $u$ is a nonnegative multiple of $u_*$.
To derive a contradiction, suppose there exists a $\nu \in \Omega$ for which this fails, i.e., $u_*$ and $u_\nu$
are either linearly independent or one is a negative scalar multiple of the other. Then there is an element
of the predual whose pairing with one of these is positive and with the other is negative; recalling Proposition
\ref{krprop} (and Theorem \ref{duality}), it follows that there must exist two lotteries $p, q \in \Delta_1(X)$
such that
\begin{equation}\label{omegau}
\int_X u_*\, {\rm d}(p-q) > 0\qquad\mbox{and}\qquad
\int_X u_\nu\, {\rm d}(q-p) > 0.
\end{equation}
Let $A:= \{\omega \in \Omega: \int u_\omega\, {\rm d}(p - q) > 0\}$ and $B:= \Omega\setminus A$;
both of these sets are nonempty by (\ref{omegau}). Now consider the act $f \in \Delta_1(X)^\Omega$
with $f|_A = p$ and $f|_B = q$. As $\succsim$ is locally probabilistically sophisticated, there is an
$\alpha \in \Delta(\Omega)$ with $f \sim f^\alpha$. But this is impossible because
\begin{eqnarray*}
\sum_{\omega \in \Omega} \int_X u_\omega\, {\rm d}f^\alpha(\omega)
&=& \sum_{\omega \in \Omega}\left(\alpha(A)\int_X u_\omega\, {\rm d}p
+ (1 - \alpha(A))\int_X u_\omega\, {\rm d}q\right)\cr
&<& \sum_{\omega \in \Omega} \int_X u_\omega\, {\rm d}f(\omega),
\end{eqnarray*}
where the inequality is seen by comparing the sums term by term.

By what we have found in the previous paragraph, for every $u \in \mathbb{U}$ there exists a vector
$a \in [0,\infty)^\Omega$ such that $u_\omega = a_\omega u_*$ for all $\omega \in \Omega$.
Let $\mu_u \in \Delta(\Omega)$ be the probability measure with $\mu_u\{\omega\} =
\frac{1}{\theta}a_\omega$ where $\theta = \sum_{\omega \in \Omega} a_\omega$. Note that
\begin{equation}\label{constant}
f\succsim g\qquad\mbox{iff}\qquad \sum_{\omega \in \Omega} \mu_u\{\omega\}
\int_X u_*\, {\rm d}(f(\omega) - g(\omega)) \geq 0\quad\mbox{for every }u \in \mathbb{U}.
\end{equation}

Now take any distinct $u, u' \in \mathbb{U}$. We aim to show that $\mu_u = \mu_{u'}$. To this
end, fix an arbitrary state $\omega_0 \in \Omega$. Find $x,y \in X$ such that $u_*(x) - u_*(y)$ and
$u_*'(x) - u_*'(y)$ are both nonzero\footnote{If there is any point at which both $u_*$ and $u_*'$ are
nonzero, that point and the base point work. Otherwise, find $x \in X$ at which $u_*(x) \neq 0$
(and hence $u_*'(x) = 0$) and $y \in X$ at which $u_*'(y) \neq 0$ (and hence $u_*(y) = 0$); then this
pair of points works.}
and define $f \in \Delta_1(X)^\Omega$ by $f(\omega_0):= \delta_x$ and $f(\omega):= \delta_y$
for any $\omega \in \Omega\setminus\{\omega_0\}$.
Since $\succsim$ is locally probabilistically sophisticated, there is an $\alpha \in \Delta(\Omega)$ with
$f \sim f^\alpha$. But $(f - f^\alpha)(\omega_0) = (1 - \alpha\{\omega_0\})(\delta_x - \delta_y)$ and
$(f - f^\alpha)(\omega) = -\alpha\{\omega_0\}(\delta_x - \delta_y)$ for any $\omega \in \Omega \setminus
\{\omega_0\}$, so (\ref{constant}) entails
\begin{equation*}
\mu_u\{\omega_0\}(1 - \alpha\{\omega_0\})(u_*(x) - u_*(y)) - (1 - \mu_u\{\omega_0\})\alpha\{\omega_0\}
(u_*(x) - u_*(y)) = 0,
\end{equation*}
i.e., $(\mu_u\{\omega_0\} - \alpha\{\omega_0\})(u_*(x) - u_*(y)) = 0$, and therefore $\mu_u\{\omega_0\} =
\alpha\{\omega_0\}$. Applying the same argument to $u'$ yields $\mu_{u'}\{\omega_0\} =
\alpha\{\omega_0\}$, and since $\omega_0$ was arbitrary we conclude that $\mu_u = \mu_{u'}$, as
desired. Combining this with (\ref{constant}) yields the representation claimed in the theorem.

It remains to prove the uniqueness claim. Let us assume that pairs $(\mu,\mathcal{U})$ and
$(\mu', \mathcal{U}')$ both satisfy all the requirements of the theorem. Since $\succsim$ is nontrivial,
there is some nonzero $u \in \mathcal{U}$; fix $x \in X$ with $u(x) \neq 0$. Then $\delta_x \sim \delta_e$
must fail, which means that we must also have $u'(x) \neq 0$ for some $u' \in \mathcal{U}'$. Now the
argument of the previous paragraph, applied to $x$ and $e$, yields that $\mu = \mu'$.
\end{proof}

\end{document}